\theoremstyle{definition}
\newtheorem{definition}{Definition}[section]
\theoremstyle{plain}
\newtheorem{theorem}{Theorem}
\newtheorem*{theorem*}{Theorem}
\newtheorem{proposition}[definition]{Proposition}
\newtheorem*{proposition*}{Proposition}
\newtheorem{lemma}[definition]{Lemma}
\newtheorem*{lemma*}{Lemma}
\newtheorem{corollary}[definition]{Corollary}
\newtheorem*{conjecture}{Conjecture}
\newtheorem*{fact*}{Fact}
\theoremstyle{remark}
\newtheorem{remark}[definition]{Remark}
\newcommand{\R}{\mathbf{R}}
\newcommand{\Z}{\mathbf{Z}}
\newcommand{\gl}{\mathfrak{gl}}
\renewcommand{\sl}{\mathfrak{sl}}
\newcommand{\su}{\mathfrak{su}}
\newcommand{\aff}{\mathfrak{aff}}
\renewcommand{\a}{\mathfrak{a}}
\newcommand{\g}{\mathfrak{g}}
\newcommand{\h}{\mathfrak{h}}    
\newcommand{\m}{\mathfrak{m}}
\newcommand{\so}{\mathfrak{so}}
\newcommand{\p}{\mathfrak{p}}
\newcommand{\s}{\mathfrak{s}}
\renewcommand{\u}{\mathfrak{u}}
\newcommand{\e}{\mathrm{e}}
\renewcommand{\d}{\mathrm{d}}
\DeclareMathOperator{\Isom}{Isom}
\DeclareMathOperator{\Ad}{Ad}  
\DeclareMathOperator{\ad}{ad}
\DeclareMathOperator{\Ker}{Ker}
\DeclareMathOperator{\Span}{Span}
\DeclareMathOperator{\Kill}{Kill}
\DeclareMathOperator{\Int}{Int}
\DeclareMathOperator{\GL}{GL}
\DeclareMathOperator{\SL}{SL}
\DeclareMathOperator{\CO}{CO}
\DeclareMathOperator{\PSL}{PSL}
\DeclareMathOperator{\PO}{PO}
\DeclareMathOperator{\SO}{SO}
\DeclareMathOperator{\SU}{SU}
\DeclareMathOperator{\Aff}{Aff}
\DeclareMathOperator{\Conf}{Conf}
\DeclareMathOperator{\id}{id}
\DeclareMathOperator{\Supp}{Supp}
\renewcommand{\S}{\mathbf{S}}
\newcommand{\E}{\mathbf{E}}
\renewcommand{\H}{\mathbf{H}}
\newcommand{\dS}{\mathbf{dS}} 
\newcommand{\Ein}{\mathbf{Ein}} 
\newcommand{\X}{\mathbf{X}}
\newcommand{\N}{\mathcal{N}}
\renewcommand{\epsilon}{\varepsilon}
\renewcommand{\geq}{\geqslant}
\renewcommand{\leq}{\leqslant}
\renewcommand{\hat}{\widehat}  
\newcommand{\hx}{\hat{x}}
\renewcommand{\tilde}{\widetilde}
\renewcommand{\bar}{\overline}
\title{Lorentzian manifolds with a conformal action of $\SL(2,\R)$}
\author{Vincent Pecastaing}
\date{}
\begin{document}

\maketitle

\vspace*{-1cm}

\begin{center}
\today
\end{center}

\begin{center}
{\footnotesize Laboratoire de mathématique d'Orsay, UMR 8628 Université Paris-Sud et CNRS, Université Paris-Saclay, 91405 ORSAY Cedex, FRANCE. \textit{Email}: vincent.pecastaing@normalesup.org }
\end{center}

\vspace*{-.7cm}

\begin{center}
{\footnotesize \textit{Mathematics Subject Classification 2010}: 53A30, 53B30, 57S20, 37D40, 37D25.}
\end{center}

\begin{abstract}
We consider conformal actions of simple Lie groups on compact Lorentzian manifolds. Mainly motivated by the Lorentzian version of a conjecture of Lichnerowicz, we establish the alternative: Either the group acts isometrically for some metric in the conformal class, or the manifold is conformally flat - that is, everywhere locally conformally diffeomorphic to Minkowski space-time. When the group is non-compact and not locally isomorphic to $\SO(1,n)$, $n \geq 2$, we derive global conclusions, extending a theorem of \cite{frances_zeghib} to some simple Lie groups of real-rank $1$. This result is also a first step towards a classification of conformal groups of compact Lorentzian manifolds, analogous to a classification of their isometry groups due to Adams, Stuck and, independently, Zeghib \cite{adams_stuck,adams_stuckII,zeghib98}.
\end{abstract}

\tableofcontents

\section{Introduction}
\label{s:intro}
Given a geometric structure on a differentiable manifold $M$, an interesting problem consists in relating the algebraic and dynamical properties of its automorphism group to the geometry of the manifold. The question we are considering in this article is to infer geometric information from the dynamics of a simple Lie group, which is acting by preserving the conformal geometry defined by a Lorentzian metric $g$ on $M$. 

We already had investigated this problem in the \textit{real-analytic} case in \cite{article3}. The analyticity assumption enabled us to develop strong arguments based on the general behavior of local automorphisms of analytic rigid geometric structures, first described by Gromov \cite{gromov}, and then revisited by Melnick \cite{melnick} for Cartan geometries, see also \cite{article1}. However, these methods were not transposable to smooth structures, the conclusions of Gromov's theory being weaker for $\mathcal{C}^{\infty}$ structures. 

More generally, considering real-analytic rigid geometric structures reduces significantly the difficulty, be it at a local or global scale, and the corresponding smooth problem can be much more complicated to handle. For instance, a celebrated theorem of D'Ambra \cite{dambra88} on analytic, compact, simply connected Lorentzian manifolds, based on properties of local extensions of local Killing fields, is still open in the $\mathcal{C}^{\infty}$ case.

\vspace*{.2cm}

The main contribution of the present article is to introduce what we think to be a new approach in the study of conformal Lorentzian dynamics, valid in smooth regularity. We no longer use Gromov's theory, and the corresponding difficulty of the problem is now treated via the theory of non-uniformly hyperbolic dynamics.

\paragraph*{Ferrand-Obata theorem.} One of the main motivations for the study of conformal dynamics of Lie groups in Lorentzian signature comes from the Riemannian setting. A strong theorem due to Ferrand \cite{ferrand71,ferrand96} and Obata \cite{obata70} asserts that if a Lie group acts conformally and non-properly on a Riemannian manifold, then this manifold is conformally diffeomorphic to the round sphere $\S^n$ or the Euclidean space $\E^n$ of same dimension. Thus, the sphere being the conformal compactification of the Euclidean space, there is essentially one Riemannian manifold admitting a non-proper conformal action, and of course, this action is the one of a subgroup of the Möbius group on $\S^n$ or $\S^n \setminus \{p\}$, with $p \in \S^n$.

\vspace*{.2cm}

This theorem nicely illustrates the \textit{rigidity} of conformal dynamics and suggests that analogous phenomenon could be observed on other kinds of rigid geometric structures, especially conformal structures in other signatures. Non-properness of the action is no longer adapted in this context and a pertinent dynamical hypothesis is \textit{essentiality}.

\vspace*{.2cm}

Recall that two pseudo-Riemannian metrics $g$ and $g'$ on a manifold $M$ are said to be conformal if there exists a smooth function $\varphi	: M \rightarrow \R_{>0}$ such that $g' = \varphi g$. The conformal class of $g$ is $[g] = \{g', \ g' \text{ conformal to } g\}$, and a local diffeomorphism is said to be conformal if its differential preserves $[g]$. When $\dim M \geq 3$, the group of conformal diffeomorphisms of $(M,g)$ is a Lie transformation group, noted $\Conf(M,g)$.

\begin{definition}
Let $H < \Conf(M,g)$ be a Lie subgroup. We  say that \textit{$H$ acts inessentially on $M$}, or simply $H$ \textit{is inessential}, if there exists $g'$ conformal to $g$ such that $H$ acts on $M$ by isometries of $g'$. If not, we say that $H$ acts \textit{essentially}, or simply that $H$ \textit{is essential}.
\end{definition}

In fact, a Riemannian conformal action is essential if and only if it is non-proper (\cite{ferrand96}, Theorem A2), and Ferrand-Obata result concerns essential Riemannian groups. The question that naturally arises is whether or not there exists a unique pseudo-Riemannian manifold with an essential conformal group, or at least if one can classifies such manifolds. 

It turned out that the existence of an essential group is far less restrictive for non-Riemannian manifolds, even when the metric is Lorentzian: In \cite{alekseevsky}, Alekseevsky built many examples of Lorentzian metrics on $\R^n$ admitting an essential flow. In \cite{frances_ferrand_obata_lorentz}, Frances provided infinitely many examples of \textit{compact} Lorentzian manifolds whose conformal group is essential. See \cite{kuhnel_rademacher1,kuhnel_rademacher2,frances_lichnerowicz} for other signatures.

\vspace*{.2cm}

However, all the examples of \cite{frances_ferrand_obata_lorentz} are locally conformally equivalent, and a problem remains open on the local geometry of compact Lorentzian manifolds, often cited in the literature as \textit{Generalized} or \textit{pseudo-Riemannian Lichnerowicz conjecture}, one of its first appearance is in \cite{dambra_gromov}, Section 7.6.

\begin{conjecture}
If a compact Lorentzian manifold has an essential conformal group, then it is conformally flat.
\end{conjecture}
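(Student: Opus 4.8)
The plan is to phrase conformal flatness in terms of a Cartan connection and to kill its curvature at a recurrent point by exploiting the dynamics forced by essentiality. Since $\dim M = n \geq 3$, the Lorentzian conformal class $[g]$ of signature $(1,n-1)$ is equivalent to a normal parabolic Cartan geometry $(B \to M, \omega)$ modeled on the Einstein universe $\Ein^{1,n-1} = \PO(2,n)/P$, where $P$ stabilizes an isotropic line in $\R^{2,n}$, $\g = \so(2,n)$ and $\p = \Lie(P)$. The manifold is conformally flat exactly when the Cartan curvature $\kappa$, an equivariant $\g$-valued $2$-form on $B$, vanishes identically; equivalently $(M,[g])$ is everywhere locally modeled on $\Ein^{1,n-1}$. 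So the whole problem reduces to proving $\kappa \equiv 0$.

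Second, I would convert essentiality into hyperbolic dynamics. Essentiality of $H$ means no metric in $[g]$ is $H$-invariant, and by a Ferrand-type argument this is equivalent to unboundedness of the $P$-valued \emph{holonomy cocycle} of the action (no change of metric can keep all transported frames inside a fixed compact set). Choosing a point $x$ in the non-wandering set of the $H$-action together with a sequence $(h_k)$ realizing the blow-up, the structure theory of the parabolic $P$ lets me normalize the associated \emph{holonomy sequence} $(p_k) \subset P$ so that, up to a subsequence, it is asymptotic to a one-parameter group whose adjoint action $\Ad(p_k)$ on $\g$ contracts a nontrivial subspace and expands its complement, a linear north--south dynamics.

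Third comes the local vanishing. The equivariance of the curvature reads, over a frame $b \in B$ above $x$, as $\kappa(p_k \cdot b) = \Ad(p_k)\bigl(\kappa(b)(\Lambda^2 \Ad(p_k)^{-1}\,\cdot\,)\bigr)$. Because $x$ is (almost) recurrent, $\kappa(p_k \cdot b)$ stays bounded, while on the subspace of $\Hom(\Lambda^2(\g/\p), \g)$ on which the combined $\Ad(p_k)$-action expands, boundedness forces the corresponding components of $\kappa(b)$ to vanish. A short analysis of $\kappa$ as an element of this $P$-module, splitting it into its Weyl and Cotton parts and tracking weights under the normalized one-parameter group, then yields $\kappa(b) = 0$; that is, $(M,[g])$ is conformally flat at $x$.

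The final and decisive step is to propagate flatness from the single recurrent point $x$ to all of $M$. The flat locus $Z = \{\kappa = 0\}$ is closed and $H$-invariant, so it suffices to prove it is open. In real-analytic regularity this is immediate by unique continuation, but \emph{this is precisely where the smooth problem becomes hard and where the conjecture is still open}: lacking analyticity, I would replace Gromov's machinery by the non-uniformly hyperbolic dynamics the paper builds on, realizing $x$ inside a stable/unstable lamination and using the contraction to transport the flat germ along the lamination so that $Z$ engulfs an open set, then concluding $Z = M$ by a minimality argument on the non-wandering dynamics. Controlling the curvature \emph{uniformly} along the whole lamination, and handling the degenerate cases where the recurrent dynamics is too weak to produce strong contraction, notably actions modeled on $\SO(1,n)$ and general non-semisimple essential groups, is the main obstacle and the reason a complete proof of the conjecture remains out of reach.
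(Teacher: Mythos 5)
You were asked to prove a statement that the paper itself records as a \emph{conjecture}: the paper does not prove it in this generality, and no ``paper proof'' exists to match your attempt against. What the paper establishes (Theorem 1.1 and Corollary 1.2) is the special case in which the identity component of the conformal group contains a non-compact simple immersed subgroup, reduced to essential actions of groups locally isomorphic to $\SL(2,\R)$. Your closing admission that the general smooth case ``remains out of reach'' is therefore accurate, and your sketch must be judged as a strategy outline rather than a proof.

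Even as an outline, there are two concrete gaps beyond the one you admit. First, your second step is unsupported: essentiality of the conformal group is not known to be equivalent to unboundedness of a holonomy sequence \emph{at a recurrent point}, and even granting an unbounded sequence $(p_k) \subset P$, its normalized form need not be strong enough for your third step. The $\Ad(p_k)$-action on the curvature module $\Hom(\Lambda^2(\g/\p),\g)$ has contracted, neutral, and expanded weight spaces, and the boundedness-plus-equivariance argument kills only the components lying in strictly expanded weights; in general one obtains partial degeneracy of the Weyl--Cotton tensor, not $\kappa(b)=0$. This is exactly why the paper, in its special case, first pins down the holonomy of the $\sl(2)$-triple algebraically (classifying the representations $\sl(2,\R) \to \so(2,n)$ in Section 4) and then invokes strongly stable dynamics or the Frances--Melnick results on unipotent holonomy, instead of a bare weight argument at one point. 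Second, your propagation step substitutes an unproven lamination-transport mechanism for the paper's actual one: in Section 5 the paper does not carry a flat germ along stable manifolds, it shows that \emph{every} $H$-orbit closure meets $F_{\leq 2}$, by a contradiction argument that uses the full $\sl(2)$-triple of conformal fields --- the space-likeness of the hyperbolic field near $\partial U$ (via the embedding-theorem Proposition 3.3), Lemma 2.3 on couples $(X,Y)$ with $[X,Y]=\lambda Y$ forcing local isometric behavior, and the $\phi_X^t$-invariant tubes $K_{\epsilon}$. With only a single recurrent point and no such algebra of conformal fields, no analogue of this argument is available; that, as you correctly note, is the true locus of the open problem, together with the degenerate holonomy types (e.g.\ actions modeled on $\SO(1,n)$ or non-semisimple essential groups) where the recurrence produces no usable contraction.
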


Recall that a pseudo-Riemannian manifold $(M,g)$ is conformally flat if any point admits a neighborhood $U$ such that $g|_U$ is conformal to a flat metric on $U$. Let us point out that the compactness assumption is necessary since most of the metrics Alekseevsky exhibited in \cite{alekseevsky} (7.3) are not conformally flat.

\vspace*{.2cm}

The main result of this article positively answers this conjecture when the manifold admits an essential action of a \textit{simple} Lie group. By an averaging argument, it can be easily observed that any compact group must act inessentially. Thus, we will deal with actions of non-compact simple Lie groups, and we will especially consider the ``smallest'' ones, namely Lie groups locally isomorphic to $\SL(2,\R)$. Even with the simpleness assumption on the acting group, the situation is still very rich. For instance, all the examples of \cite{frances_ferrand_obata_lorentz} admit an essential action of a Lie group locally isomorphic to $\SL(2,\R)$. 

\vspace*{.2cm}

Following the dichotomy inessential/essential, let us first recall the case of \textit{isometric} actions of $\SL(2,\R)$.

\paragraph{Inessential actions: simple Lie groups of Lorentzian isometries.}

Contrarily to Riemannian manifolds, there exists compact Lorentzian manifolds whose isometry group is non-compact. Furthermore, it is possible that the isometry group contains a non-compact simple subgroup. Indeed, consider $H$ a Lie group locally isomorphic to $\SL(2,\R)$ and note $g_K$ its Killing metric. This metric is Lorentzian and invariant under left and right translations of $H$ on itself. Thus, it induces a Lorentzian metric $g$ on any quotient $M := H/\Gamma$ where $\Gamma$ is a uniform lattice of $H$. Since the left action preserves $g_K$ and commutes with the right action, it induces an isometric action of $H$ on $(M,g)$.

\vspace*{.2cm}

As Zimmer first observed in \cite{zimmer86}, such a situation is singular in the sense that up to finite covers, $\PSL(2,\R)$ is the only non-compact simple Lie group that can act faithfully and isometrically on a compact Lorentzian manifold. Deeper in the description, Gromov considered in \cite{gromov} the geometry of a compact Lorentzian manifold $(M,g)$ admitting an isometric action of a Lie group $H$ locally isomorphic to $\SL(2,\R)$. He proved that some isometric cover of $M$ is isometric to a warped product $(H \! ~_{\omega} \! \! \times N)$, where $H$ is endowed with its Killing metric, $N$ is a Riemannian manifold and $\omega : N \rightarrow \R_{>0}$ is a smooth function.

\vspace*{.2cm}

Finally, the situation for isometric actions of non-compact simple Lie group is very rigid and well understood. We now consider essential actions.

\paragraph*{Essential conformal actions of simple Lie groups.}

This subject had been previously investigated in any signature, when the group that acts has \textit{high real-rank}.

\vspace*{.2cm}

In \cite{zimmer87}, Zimmer proved that if a semi-simple Lie group without compact factor acts on a compact pseudo-Riemannian manifold of signature $(p,q)$, with $p \leq q$, then its real-rank is at most $p+1$. In \cite{bader_nevo}, Bader and Nevo proved that if the group is simple and has maximal rank, then it is locally isomorphic to $\SO(p+1,k)$ with $p+1 \leq k \leq q+1$. At last, in \cite{frances_zeghib}, Frances and Zeghib proved that in the same situation, the manifold must be some quotient of the universal cover of the \textit{model space} $\Ein^{p,q}$ of conformal geometry of signature $(p,q)$, introduced in Section \ref{sss:equivalence_principle}. See also \cite{bader_frances_melnick} for analogous results in other parabolic geometries.

\vspace*{.2cm}

Assuming the real-rank maximal restricts a lot the possibilities for the geometry, and a larger variety of examples appears when this assumption is removed, even in Lorentzian signature. As we recalled above, there exists infinitely many compact Lorentzian manifolds with a conformal essential action of a Lie group locally isomorphic to $\SL(2,\R)$, and it seems not plausible to classify these manifolds up to \textit{global} conformal equivalence (\cite{frances_ferrand_obata_lorentz}). However, the dynamics of such a group has implications on the \textit{local} geometry, and it is the main result of this article.

\begin{theorem}
\label{thm:main}
Let $(M^n,g)$, $n \geq 3$, be a smooth compact connected Lorentzian manifold, and $H$ be a connected Lie group locally isomorphic to $\SL(2,\R)$. If $H$ acts conformally and essentially on $(M,g)$, then $(M,g)$ is conformally flat.
\end{theorem}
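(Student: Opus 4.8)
The plan is to translate conformal flatness into the vanishing of a single tensor and then kill that tensor dynamically. I would encode the conformal class $[g]$ by its canonical normalized Cartan geometry $(\pi : \hat{M} \to M, \omega)$ modeled on the Einstein universe $\Ein^{1,n-1} = \PO(2,n)/P$, so that $\g = \so(2,n)$ is $|1|$-graded, $\p$ stabilizes an isotropic line, and $\g/\p \cong \R^{1,n-1}$. The obstruction to conformal flatness is the Cartan curvature $\kappa : \hat{M} \to \Hom(\wedge^2(\g/\p), \g)$; since the Weyl tensor ($n \geq 4$) and the Cotton tensor ($n = 3$) are both recovered from $\kappa$, proving $(M,g)$ conformally flat amounts to proving $\kappa \equiv 0$, uniformly in the dimension. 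The conformal action of $H$ lifts to an action on $\hat{M}$ preserving $\omega$, whence $\kappa$, viewed as the associated $H$-invariant, $P$-equivariant curvature function on $\hat{M}$, is constant along the lifted $H$-orbits. This equivariance is the lever throughout.

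The heart of the argument is to convert essentiality into contracting holonomy. Fix the structural data of $H \cong_{\mathrm{loc}} \SL(2,\R)$: an $\R$-split one-parameter group $A = \{a_t\}$, opposite unipotents $N^{\pm}$, and the renormalization relation $a_t n_s a_{-t} = n_{e^{2t}s}$. Essentiality means no metric in $[g]$ is $H$-invariant, equivalently the conformal distortion of the action is unbounded; in Cartan terms the holonomy escapes every compact subset of $P$. Using compactness of $M$ to produce an $A$-invariant probability measure and the recurrence of the $A$-flow, I would localize this unboundedness along orbits of $A$ and study the Lyapunov exponents of $a_t$ on $\hat{M}$. The outcome I aim for is a point $p$ together with a sequence $h_k \in H$ approximately fixing $p$ whose holonomy $\rho_k \in P$ is unbounded and $\R$-split contracting --- either as an honest fixed point of a subgroup conjugate to $A$, or, through Pesin theory, as an Oseledets-regular recurrent orbit along which the holonomy accumulates in the appropriate chamber of $P$.

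Granting such a point, curvature vanishing becomes representation-theoretic. The $\R$-split element of the holonomy acts on $W := \Hom(\wedge^2(\g/\p), \g)$ with a weight decomposition given by the grading; the normalization of the Cartan connection constrains $\kappa_{\hat p}$ to the harmonic part $\ker \partial^* \subset W$, on which the grading element acts with \emph{strictly positive} weights, while $H$-invariance forces $\kappa_{\hat p}$ to be fixed by the unbounded contracting sequence $\rho_k$. These two constraints are compatible only with $\kappa_{\hat p} = 0$. Moreover, since $\kappa$ is constant along lifted $H$-orbits, continuity propagates the vanishing to every point whose $a_t$-orbit accumulates on the fiber over $p$ --- that is, to the whole stable manifold of the attracting fixed point $p$, which is open. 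Thus the set $\mathcal{F}$ of conformally flat points of $M$ is nonempty and open. This is the local Ferrand--Obata--Frances phenomenon at a strongly essential point.

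It remains to globalize, and this is the main obstacle. The flat set $\mathcal{F}$ is open and $H$-invariant, so $Z := M \setminus \mathcal{F}$ is compact and $H$-invariant, and I must show $Z = \emptyset$. If $Z \neq \emptyset$, I would restrict the $A$-action to $Z$, take an $A$-invariant measure carried by $Z$, and rerun the curvature-vanishing step to exhibit a strongly essential contracting point $p \in Z$; but then $p$ lies in the open flat set $\mathcal{F}$, contradicting $p \in Z$. The delicate points are exactly where smoothness bites: one cannot analytically continue flatness, so every propagation must be powered by recurrence and hyperbolicity rather than by Gromov's analytic machinery. I expect the two hardest cases to be (i) \emph{vanishing Lyapunov exponents}, where the hyperbolic element yields no contraction and one must instead exploit the renormalization $a_t n_s a_{-t} = n_{e^{2t}s}$ to manufacture unbounded unipotent holonomy and still drive $\kappa$ to zero, and (ii) verifying that contracting holonomy genuinely occurs over \emph{every} nonempty $H$-invariant closed set, so that the final contradiction on $Z$ is legitimate.
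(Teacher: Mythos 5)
Your proposal correctly identifies the ambient framework (the normalized Cartan geometry modeled on $\Ein^{1,n-1}$, holonomy, contraction killing invariant curvature of positive homogeneity), but the two points you flag as ``delicate'' are not technical details --- they are the entire content of the theorem, and the resolution you sketch for the second one would actually fail. Essentiality is a \emph{global} property of the $H$-action on $M$; it does not localize to closed invariant subsets. So when you assume $Z = M \setminus \mathcal{F} \neq \emptyset$ and propose to ``rerun the curvature-vanishing step'' on $Z$ to produce a strongly essential contracting point in $Z$, there is no mechanism to produce it: the action restricted to a neighborhood of $Z$ can perfectly well be inessential, i.e.\ preserve a metric, in which case no unbounded holonomy exists over $Z$ at all. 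This is not a hypothetical loophole --- it is exactly what the paper proves happens in the critical configuration: near the boundary $K = \partial U$ of the flat ``basin'' $U$, the action is locally free with Lorentzian orbits, and applying the inessentiality lemma (Lemma \ref{lem:couple_inessential}) to an $\sl(2)$-triple shows $H$ acts by \emph{isometries} of $g/g(X,X)$ there. The paper's contradiction is therefore of a completely different nature: the isometric behavior yields $H$-invariant tubular neighborhoods $K_{\epsilon}$ of $K$ disjoint from $F_{\leq 2}$, while $K_{\epsilon}$ must meet $U$, whose points by definition have orbit closures meeting $F_{\leq 2}$ --- a contradiction between invariance and separation, not the discovery of a contracting point inside $Z$.

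The first step of your plan is also asserted rather than proven, and the paper's entry point is different and arguably unavoidable: instead of trying to extract contracting holonomy directly from unbounded conformal distortion at recurrent points (which is not established and is problematic precisely in your case (i)), the paper uses the prior result that $H$ is essential if and only if it does \emph{not} act everywhere locally freely (Proposition \ref{prop:locally_free_sl2_actions}). Nontrivial isotropy immediately produces holonomy in $P$, and the analysis then proceeds by classifying the embeddings $\sl(2,\R) \to \so(2,n)$ arising this way; only some cases give strongly stable (genuinely contracting) dynamics where your weight argument applies. In the balanced cases (your case (i)), the paper must combine the unipotent holonomy of $Y$ (via Frances--Melnick, which yields only a flat open set having $x_0$ in its \emph{closure}), a fixed-point segment $\Delta$ obtained by developing curves, and merely stable dynamics to propagate vanishing of the Weyl--Cotton tensor to a full neighborhood. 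Finally, note that Pesin theory does appear in the paper, but for a purpose orthogonal to yours: not to manufacture contracting holonomy, but to prove that the hyperbolic flow $\phi_X^t$, acting homothetically on a compact invariant set, has a periodic orbit --- which is what forces $2$-dimensional minimal sets to be compact conical orbits and makes the holonomy analysis applicable to them.
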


Since $\sl(2,\R)$ is the most elementary non-compact simple real Lie algebra, it will not be difficult to observe that this theorem positively answers Generalized Lichnerowicz conjecture as soon as the conformal group of the manifold contains a non-compact simple immersed Lie subgroup.

\begin{corollary}
\label{cor:identity_component}
Let $(M^n,g)$ be a smooth compact connected Lorentzian manifold, with $n \geq 3$, and let $G$ be the identity component of its conformal group. Assume that $\g$ contains a non-compact simple Lie subalgebra. If $G$ is essential, then $(M,g)$ is conformally flat.
\end{corollary}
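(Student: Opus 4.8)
The plan is to deduce the statement from Theorem~\ref{thm:main} by exhibiting, inside $G$, a connected subgroup locally isomorphic to $\SL(2,\R)$ whose action is essential. The algebraic input is classical: a non-compact simple real Lie algebra has real rank at least one, and for any restricted root $\alpha$ the span $\g_{\alpha} \oplus [\g_{\alpha},\g_{-\alpha}] \oplus \g_{-\alpha}$ is a subalgebra isomorphic to $\sl(2,\R)$ (equivalently, complete a nonzero nilpotent element to a real $\sl(2,\R)$-triple via Jacobson--Morozov). Thus the non-compact simple subalgebra $\s \subset \g$ contains a copy of $\sl(2,\R)$, and the corresponding connected immersed subgroup $H < G$ is locally isomorphic to $\SL(2,\R)$. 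If $H$ acts essentially, Theorem~\ref{thm:main} yields that $(M,g)$ is conformally flat and we are done.

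It therefore suffices to exclude the possibility that $(M,g)$ is not conformally flat while $G$ is essential. So assume $(M,g)$ is not conformally flat. By the contrapositive of Theorem~\ref{thm:main}, every subgroup of $G$ locally isomorphic to $\SL(2,\R)$ is inessential; in particular $H$ preserves a metric $g_H \in [g]$ and acts on $(M,g_H)$ by isometries. Since $H$ is a non-compact group locally isomorphic to $\SL(2,\R)$ acting isometrically on the compact Lorentzian manifold $(M,g_H)$ (a situation already signaled as very rigid by \cite{zimmer86}), Gromov's structure theorem \cite{gromov} applies: a finite isometric cover $(\tilde M, \tilde g_H)$ of $(M,g_H)$ is a warped product $\tilde H \times_{\omega} N$, where $\tilde H$ carries its (Lorentzian) Killing metric and $N$ is Riemannian.

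The remaining, and main, step is to show that this rigid isometric structure forces the whole group $G$ to be inessential, contradicting the hypothesis. The expected dichotomy is the following. When the warping is nontrivial, the factor $N$ and the function $\omega$ should be encoded in a conformally invariant way — the $\tilde H$-orbits forming the leaves of a canonical conformally invariant foliation, with $\omega$ governing the distortion transverse to it — so that every conformal diffeomorphism must preserve this data and hence act by a homothety of $\tilde g_H$; as $\tilde M$ is compact, such a homothety preserves the finite total volume and is therefore an isometry, whence $G$ preserves $\tilde g_H$ and is inessential. When the warping is trivial, $\tilde M$ is conformal to the Killing metric of $\tilde H$, which has constant curvature and is thus conformally flat, again forcing $(M,g)$ conformally flat. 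The hard part is making this rigorous: one must control $\Conf(M,[g])$ directly from the warped geometry and handle intermediate or degenerate warpings, since the Killing metric of $\tilde H$ alone has a large essential conformal group and it is precisely the presence of $N$ and $\omega$ that must cut this down. The algebraic reduction to a single $\SL(2,\R)$ and the appeal to Theorem~\ref{thm:main} are the easy parts; the warped-product conformal analysis is where the work concentrates.
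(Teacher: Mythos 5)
Your reduction to a connected immersed subgroup $H < G$ locally isomorphic to $\SL(2,\R)$, and the appeal to Theorem \ref{thm:main} when $H$ is essential, are exactly right and match the paper. But the entire content of the corollary beyond Theorem \ref{thm:main} is the implication ``$H$ inessential $\Rightarrow$ $G$ inessential'', and your proposal does not prove it: you describe an ``expected dichotomy'' and explicitly defer ``the hard part''. Moreover, the sketch as it stands would not go through. First, the central claim --- that the $\tilde{H}$-orbit foliation and the warping data of Gromov's splitting are conformally invariant, so that every element of $G$ acts as a homothety of $\tilde{g}_H$ --- has no justification: an element $f \in G$ satisfies $f(H.x) = (fHf^{-1}).f(x)$, and since $G$ need not normalize $H$, there is no reason for $f$ to preserve the $H$-orbit foliation, let alone the transverse factor $N$ and the function $\omega$. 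Second, the trivial-warping branch is incorrect as stated: when $\omega$ is constant and $\dim N \geq 2$, the metric product of $\tilde{H}$ (which carries a constant-curvature, anti-de Sitter type metric) with a Riemannian $N$ is in general \emph{not} conformally flat (conformal flatness of a product forces one factor to be $1$-dimensional or the two factors to have opposite constant curvatures), so that case does not produce the contradiction you want.

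The paper's proof of the key implication is much more elementary and avoids the global warped-product geometry entirely. Let $(X,Y,Z)$ be an $\sl(2)$-triple in $\h$. If $H$ preserves $g_0 \in [g]$, the Zimmer--Gromov theory of isometric actions gives that $H$ acts locally freely and that $g_0$ induces (a multiple of) the Killing metric on each orbit; hence the hyperbolic field $X$ satisfies $g(X,X) > 0$ everywhere. Now $\ad(X)$ acts diagonalizably on the full finite-dimensional Lie algebra $\g \subset \Kill(M,[g])$, so one may choose a basis $(X_1,\ldots,X_N)$ of $\g$ with $[X,X_i] = \lambda_i X_i$. Setting $g_1 := g/g(X,X)$, compactness of $M$ bounds the functions $g_1(X_i,X_i)$ and $g_1(X,X_i)$ along the orbits of $\phi_X^t$, and Lemma \ref{lem:couple_inessential} then yields $\mathcal{L}_{X_i} g_1 = 0$ for every $i$. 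By connectedness of $G$, we get $G \subset \Isom(M,g_1)$, i.e.\ $G$ is inessential. This is the step your proposal is missing; with it, the corollary follows at once: if $G$ is essential then $H$ is essential, and Theorem \ref{thm:main} gives conformal flatness.
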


In particular, if a compact connected Lorentzian manifold admits a conformal essential action of a connected semi-simple Lie group, then it is conformally flat.

\paragraph*{The identity component of the conformal group.} Zimmer's result about simple Lie groups of Lorentzian isometries led to the full classification, up to local isomorphism, of the identity component of the isometry group of a compact Lorentzian manifold by Adams, Stuck \cite{adams_stuck,adams_stuckII}, and - independently - Zeghib \cite{zeghib98}. As explained below, Theorem \ref{thm:main} is also a first step in the direction of an analogous classification for the conformal group of a compact Lorentzian manifold.

\vspace*{.2cm}

The Möbius sphere has an analogous object in non-Riemannian conformal geometry: the Einstein Universe $\Ein^{p,q}$ of signature $(p,q)$ (see Section \ref{sss:equivalence_principle}). It is a compact projective quadric, naturally endowed with a conformal class of conformally flat metrics of signature $(p,q)$. Its conformal group is isomorphic to $\PO(p+1,q+1)$ and acts transitively on it.

By a generalization of Liouville's Theorem, if a Lie group $H$ acts on a conformally flat pseudo-Riemannian manifold of signature $(p,q)$, then its Lie algebra $\h$ can be identified with a Lie algebra of conformal vector fields of $\Ein^{p,q}$. In particular, $H$ can be locally embedded into $\PO(p+1,q+1)$.

\vspace*{.2cm}

Thus, by Corollary \ref{cor:identity_component}, if $(M,g)$ is a compact Lorentzian manifold of dimension at least $3$ and if $G$ is the identity component of its conformal group, then we have three possibilities for $G$:
\begin{enumerate}
\item It is inessential, and necessarily belongs to the list of Adams-Stuck-Zeghib classification.
\item It is essential and contains a non-compact simple Lie subgroup, and necessarily it is locally isomorphic to a Lie subgroup of $\SO(2,n)$ since it acts on a conformally flat Lorentzian manifold.
\item It is essential and does not contain non-compact simple Lie subgroups, and by the Levi decomposition, its Lie algebra has the form $\g \simeq \mathfrak{k} \ltimes \text{rad}(\g)$ where $\mathfrak{k}$ is a compact semi-simple Lie algebra and $\text{rad}(\g)$ is the solvable radical of $\g$.
\end{enumerate}
In upcoming works, we will establish that if $\text{rad}(\g)$ has a non-Abelian nilradical, then $G$ is either inessential or locally isomorphic to a subgroup of $\SO(2,n)$ (see \cite{these}, Ch.7 for partial results). 

\vspace*{.2cm}

This suggests that essential conformal groups can always be locally embedded into $\SO(2,n)$. The next important question is to determine which Lie subgroup of $\SO(2,n)$ can exactly be realized as the conformal group of a compact Lorentzian manifold (compare with \cite{adams_stuckII} and Theorem 1.1 of \cite{zeghib98}). 

\paragraph*{Completeness of the associated $(G,\X)$-structure.} A conformally flat pseudo-Riemannian metric of signature $(p,q)$ naturally defines an atlas of $(G,\X)$-manifold, where $\X = \tilde{\Ein}^{p,q}$ and $G =  \tilde{\SO}(p+1,q+1)$. Thus, if a non-compact simple Lie group acts conformally essentially on a compact Lorentzian manifold, then it acts by automorphisms of the associated $(G,\X)$-manifold. When the group is too small, the $(G,\X)$-structure may not be complete.

Indeed, if $k \geq 2$, consider $\R^{1,k}$ the $(k+1)$-dimensional Minkowski space and $\Gamma = <2 \id>$ the group generated by a non-trivial homothety. Naturally, $\Gamma$ acts properly discontinuously and conformally on $\R^{1,k} \setminus \{0\}$ and is centralized by the linear action of $\SO(1,k)$. Therefore, $\SO(1,k)$ acts conformally on the quotient $(\R^{1,k} \setminus \{0\}) / \Gamma$, usually called a \textit{Hopf manifold}. It is a compact conformally flat Lorentzian manifold, whose associated $(G,\X)$-structure is non-complete. Nevertheless, the structure must be complete when other non-compact simple Lie groups act.

\vspace*{.2cm}

Let $(M,g)$ be an $n$-dimensional compact Lorentzian manifold and $G = \Conf(M,g)_0$. If $G$ is essential, then its semi-simple Levi factor is either compact, or locally isomorphic to a Lie subgroup of $\SO(2,n)$. In particular, we recover the main result of \cite{article2}, where we classified semi-simple Lie groups without compact factor that can act conformally on a compact Lorentzian manifold. Up to local isomorphism, the possible groups are
\begin{enumerate}
\item $\SO(1,k)$, $2 \leq k \leq n$;
\item $\SU(1,k)$, $2 \leq k \leq n/2$;
\item $\SO(2,k)$, $2 \leq k \leq n$;
\item $\SO(1,k) \times \SO(1,k')$, $k,k' \geq 2$, $k+k' \leq \max(n,4)$.
\end{enumerate}
Theorem 3 of \cite{frances_zeghib} asserts that when a Lie group locally isomorphic to $\SO(2,k)$ is contained in $G$, then $(M,g)$ is, up to finite cover, a quotient of the universal cover of $\Ein^{1,n-1}$ by an infinite cyclic subgroup of $\tilde{\SO}(2,n)$. The same conclusion can be derived from Theorem 1.5 of \cite{bader_frances_melnick} when we consider actions of $\SO(1,k) \times \SO(1,k')$. An easy consequence of the main result of the present article is that this observation is still valid for $\SU(1,k)$.

\begin{corollary}
\label{cor:su1k}
Let $H$ be a Lie group locally isomorphic to $\SU(1,k)$, $k \geq 2$. Assume that $H$ acts conformally on a smooth compact connected Lorentzian manifold $(M^n,g)$, with $n \geq 3$. Then, $(M,g)$ is conformally diffeomorphic to a quotient $\Gamma \setminus \tilde{\Ein}^{1,n-1}$, where $\Gamma < \tilde{\SO}(2,n)$ is a discrete group acting properly discontinuously on $\tilde{\Ein}^{1,n-1}$.
\end{corollary}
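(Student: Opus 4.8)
The plan is to deduce conformal flatness from the essentiality of the action via Corollary \ref{cor:identity_component}, and then to upgrade the resulting conformally flat structure to a complete one, the completeness being the only nontrivial point. First I would check that the $H$-action (which we may assume nontrivial) is essential. If it were inessential, then $H$ would act isometrically for some $g' \in [g]$ on the compact Lorentzian manifold $(M,g')$; but by Zimmer's observation recalled above, the only non-compact simple Lie group acting faithfully and isometrically on a compact Lorentzian manifold is, up to finite cover, $\PSL(2,\R)$. Since $\su(1,k) \neq \sl(2,\R)$ for $k \geq 2$ and the kernel of the action is central (so the acting quotient is still locally isomorphic to $\SU(1,k)$), this is a contradiction. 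Hence $H$ is essential, and a fortiori so is $G := \Conf(M,g)_0$, because a $G$-invariant metric in $[g]$ would in particular be $H$-invariant.

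Since the action is nontrivial and $\su(1,k)$ is simple, its image in $\g := \Lie(G)$ is a non-compact simple subalgebra, so Corollary \ref{cor:identity_component} applies and yields that $(M,g)$ is conformally flat. By the generalized Liouville theorem recalled in the introduction, $M$ then inherits a $(\tilde{\SO}(2,n),\tilde{\Ein}^{1,n-1})$-structure, with developing map $\delta : \tilde{M} \to \tilde{\Ein}^{1,n-1}$ and holonomy $\rho : \pi_1(M) \to \tilde{\SO}(2,n)$; moreover $\su(1,k) \hookrightarrow \so(2,n)$, and after passing to a suitable cover of $H$ its action on $\tilde{M}$ is intertwined by $\delta$ with the action on $\tilde{\Ein}^{1,n-1}$ of a subgroup $\iota(H) < \tilde{\SO}(2,n)$ which normalizes $\Gamma := \rho(\pi_1(M))$.

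It remains to prove that this structure is complete, i.e. that $\delta$ is a diffeomorphism onto $\tilde{\Ein}^{1,n-1}$; then, since $M$ is compact, $\Gamma$ is discrete and acts properly discontinuously and cocompactly, and $M \cong \Gamma \backslash \tilde{\Ein}^{1,n-1}$ as required. This is the crux. I would argue by contradiction: if the structure is incomplete, the developing image $\Omega := \delta(\tilde{M})$ is a proper open $\iota(H)$-invariant subset of $\tilde{\Ein}^{1,n-1}$ carrying a properly discontinuous cocompact $\Gamma$-action. The Hopf manifolds associated with $\SO(1,k)$ show that such incomplete models genuinely occur, so the task is to show that the dynamics of $\SU(1,k)$ preclude them. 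The mechanism I would exploit is that an incomplete structure forces $\iota(H)$ to stabilize a point $p$ in the boundary of $\Omega$ and to act on the associated Minkowski-type chart (the complement of the lightcone of $p$) through the isotropy of $p$, whose relevant factor is the conformal-linear algebra $\co(1,n-1) \cong \R \oplus \so(1,n-1)$. Being simple, $\su(1,k)$ would then have to embed into $\so(1,n-1)$.

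This is impossible for $k \geq 2$: any nilpotent subalgebra of $\so(1,m)$ is conjugate into the unipotent radical of its minimal parabolic, which is abelian, whereas $\su(1,k)$ contains the non-abelian Heisenberg algebra $\heis$ of dimension $2k-1$ as the nilradical of its own minimal parabolic; a Lie algebra embedding would send $\heis$ to a non-abelian nilpotent subalgebra of $\so(1,m)$, a contradiction. Hence the structure must be complete. The delicate point, where I expect the real work to lie, is the structural claim that incompleteness forces $\iota(H)$ into the isotropy of a boundary point of $\Omega$: this is precisely where the theory of conformally flat Lorentzian structures enters, through the description of invariant developing images with compact quotient. Here I would either invoke the completeness results underlying Theorem 3 of \cite{frances_zeghib} and Theorem 1.5 of \cite{bader_frances_melnick}, or establish directly, using the attracting and repelling behavior on $\tilde{\Ein}^{1,n-1}$ of the $\R$-split one-parameter subgroup of $\SU(1,k)$, that no proper $\iota(H)$-invariant developing image admitting a compact quotient can exist.
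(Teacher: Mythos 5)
The first half of your proposal --- essentiality of $H$ via Zimmer's theorem (since $\SU(1,k)$, $k \geq 2$, is not locally isomorphic to $\PSL(2,\R)$, it cannot preserve any metric in $[g]$ on a compact manifold), hence essentiality of $G = \Conf(M,g)_0$, hence conformal flatness by Corollary \ref{cor:identity_component} --- is exactly the paper's argument. The gap is in the completeness step, and it sits precisely where you flag it yourself. First, your setup of the contradiction is incorrect: for a closed $(G,\X)$-manifold the developing map need not be a covering onto its image, so incompleteness does \emph{not} give a proper open subset $\Omega = \delta(\tilde{M})$ on which $\Gamma$ acts properly discontinuously and cocompactly with $M \cong \Gamma \backslash \Omega$. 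Second, the structural claim that incompleteness forces $\iota(H)$ to fix a point of the boundary of $\Omega$ and to act through its isotropy is itself the hard statement; you give no argument for it, and it is not a result you can quote. Your fallback --- ``invoke the completeness results underlying Theorem 3 of \cite{frances_zeghib}'' --- does not close the hole for free, because the criterion actually contained in Section 2.4 of that paper is conditional: one obtains completeness and the desired conclusion \emph{provided} that for every Lie algebra embedding $\iota : \su(1,k) \hookrightarrow \so(2,n)$, the centralizer in $\SO(2,n)$ of $\iota(\su(1,k))$ is compact. Verifying this compactness is the genuine mathematical content of the corollary, and it is exactly what the paper's Appendix proves; your proposal never addresses it.

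Note also the mechanism that makes this criterion work, which your proposal misses: the lifted action of (a cover of) $H$ on $\tilde{M}$ commutes with the deck transformations, so under the developing map the holonomy group $\Gamma$ \emph{centralizes} $\iota(\h)$ --- not merely normalizes it, as you wrote. Hence $\Gamma$ lies in the centralizer of $\iota(\su(1,k))$, and compactness of that centralizer is what tames $\Gamma$ and yields completeness, following Frances--Zeghib. Your algebraic observation that $\su(1,k)$ cannot embed into $\so(1,n-1)$ (because its minimal parabolic has a Heisenberg, hence non-abelian, nilradical, while that of $\so(1,n-1)$ is abelian) is correct, but it is a different statement from centralizer compactness and cannot be substituted for it in the quoted criterion. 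For comparison, the paper's Appendix shows by an explicit root-space computation that any $g \in O(2,n)$ commuting with $\iota(\su(1,k))$ is block-diagonal with an elliptic $\SL(2,\R)$-block and an $O(n-2)$-block, hence lies in a compact subgroup; that computation is the step your proof would still need.
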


The proof is very short: By Corollary \ref{cor:identity_component}, $(M,g)$ is conformally flat and we can imitate the end of the proof of Theorem 3 of \cite{frances_zeghib}. According to Section 2.4 of this article, it is enough to establish that if $\iota : \su(1,k) \hookrightarrow \so(2,n)$ is a Lie algebra embedding, then the centralizer in $\SO(2,n)$ of the image of $\iota$ is a compact subgroup of $\SO(2,n)$. This can be observed by elementary considerations, that we postpone in an Appendix at the end of the article.

\subsection*{Organization of the article}

Corollary \ref{cor:identity_component} is established in Section \ref{s:inessential}. Precisely, we will prove that as soon as $G$ contains an immersed Lie subgroup $H$ locally isomorphic to $\SL(2,\R)$, $G$ is essential if and only if $H$ is essential. Once it is proved, our problematic is reduced to conformal essential actions of such $H$'s.

In Section \ref{s:minimal_subsets}, we establish a dynamical property of essential conformal actions. By a result of \cite{article3}, $H$ is essential if and only if it does not act everywhere locally freely. We are now going further and describe minimal closed invariant subsets of the action, inside the subset where the action is not locally free, noted $F_{\leq 2}$. The problem is essentially to prove that if a minimal subset contains exclusively $2$-dimensional orbits, then it is in fact a single closed orbit of dimension $2$, which we call \textit{compact conical}. Quickly, this question will be reduced to prove that the flow generated by an hyperbolic one parameter subgroup of $H$ has a periodic orbit. It will be treated by using Osedelec decomposition and general arguments in non-uniformly hyperbolic dynamics.

Conformal flatness of $M$ is then established in two times. Firstly, we will prove in Section \ref{s:small_dimension} that the minimal subsets of $F_{\leq 2}$ previously described admit a conformally flat neighborhood. It is inspired by previous methods (notably \cite{frances_causal_fields}, \cite{frances_zeghib}, \cite{frances_melnick13} and \cite{article3}). Immediately, we will obtain that $F_{\leq 2}$ is contained in a conformally flat open set. Secondly, we will prove in Section \ref{s:extending_flatness} that any $H$-orbit contains a point of $F_{\leq 2}$ in its closure. This dynamical observation will directly extend conformal flatness to the whole manifold.

\subsection*{Conventions}

In this article, $M$ everywhere denotes a connected smooth manifold whose dimension is \textbf{greater than or equal to $3$}.

We note $\mathfrak{X}(M)$ the Lie algebra of vector fields defined on $M$. If $M$ is endowed with a pseudo-Riemannian metric $g$, we note $\Kill(M,[g])$ the Lie algebra of \textbf{conformal Killing vector fields} of $M$, \textit{i.e.} infinitesimal generators of conformal diffeomorphisms. The hypothesis $\dim M \geq 3$ implies that $\Kill(M,[g])$ is always finite dimensional. 

Given a differentiable action of a Lie group $G$ on $M$, we will implicitly identify its Lie algebra $\g$ with a Lie subalgebra of $\mathfrak{X}(M)$ via $X \mapsto \{ \left . \frac{\d}{\d t}\right |_{t=0} e^{-tX}.x\}_{x \in M}$.

We call \textbf{$\sl(2)$-triple} of a Lie algebra any non-zero triple $(X,Y,Z)$ in this Lie algebra satisfying the relations $[X,Y] = Y$, $[X,Z] = -Z$ and $[Y,Z] = X$.

If $f$ is a conformal transformation of $(M,g)$, the function $\varphi	: M \rightarrow \R_{>0}$ such that $f^*g = \varphi g$  is called the \textbf{conformal distortion} of $f$ with respect to $g$. If $\phi^t$ is a conformal flow, its conformal distortion is a cocycle $\lambda : M \times \R \rightarrow \R_{>0}$ over $\phi^t$, such that $[(\phi^t)^* g]_x = \lambda(x,t)g_x$ for all $x \in M$ and $t \in \R$.

If $\dim M \geq 4$, $(M,g)$ is conformally flat if and only if its Weyl tensor $W$ vanishes identically. If $\dim M = 3$, $W$ always vanishes, regardless $(M,g)$ is conformally flat or not. In this situation, conformal flatness is detected by the Cotton tensor of $(M,g)$. In this article, by ``\textbf{Weyl-Cotton curvature}'', we mean the Weyl tensor or the Cotton tensor, depending on whether $\dim M \geq 4$ or not. This tensor will always be noted $W$.

\paragraph*{Acknowledgements.} \textit{
I would like to thank Sylvain Crovisier for suggesting me the use of Pesin Theory in the study of a conformal flow. I am also grateful to Thierry Barbot, Yves Benoist, Charles Frances and Abdelghani Zeghib for useful conversations around this project.
}

\section{Inessential conformal groups}
\label{s:inessential}
Isometric actions of non-compact simple Lie groups on compact Lorentzian manifolds are very well described since the works of Zimmer and Gromov. As we recalled in the introduction, if $H$ is a non-compact simple Lie group, acting by isometries on $(M,g)$, Lorentzian compact, then $H$ is a finite cover of $\PSL(2,\R)$. Moreover, $H$ acts locally freely everywhere and the metric of $M$ induces on every orbit $H.x$ a metric proportional to the image of the Killing metric of $H$ by the orbital map. At last, the distribution orthogonal to the orbits is integrable, with geodesic leaves, proving that some isometric cover of $(M,g)$ is isometric to a warped product $H ~_{\omega} \times N$, with $N$ a Riemannian manifold, $\omega : N \rightarrow \R_{>0}$ and $H$ endowed with its Killing metric.

\vspace*{.2cm}

As it can be easily observed, there are more examples of conformal actions of non-compact simple Lie groups on compact Lorentzian manifolds, \textit{e.g.} simple Lie subgroups of $\PO(2,n)$ acting on $\Ein^{1,n-1}$. If they are not isomorphic to a finite cover of $\PSL(2,\R)$, then they necessarily act essentially. In the remaining cases, we have:

\begin{proposition}[\cite{article3}]
\label{prop:locally_free_sl2_actions}
Let $H$ be a connected Lie group locally isomorphic to $\SL(2,\R)$ and $(M,g)$ be a compact Lorentzian manifold on which $H$ acts conformally. Then, $H$ is inessential if and only if $H$ acts everywhere locally freely.
\end{proposition}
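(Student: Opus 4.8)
The plan is to prove the two implications separately, the forward one being nearly immediate and the converse carrying all the content. Suppose first that $H$ is inessential, so that there is a metric $g'\in[g]$ for which $H$ acts by isometries. Then $(M,g')$ is a compact Lorentzian manifold carrying an isometric action of a group locally isomorphic to $\SL(2,\R)$, and the structure theory of such actions recalled at the beginning of this section (Zimmer and Gromov) asserts that $H$ then acts everywhere locally freely. Since local freeness is a property of the underlying smooth action alone --- it only concerns the isotropy subgroups $\Stab_H(x)$ --- it holds equally for the conformal action on $(M,g)$. This disposes of the implication ``inessential $\Rightarrow$ locally free''.

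\textbf{Setting up the converse.} For the converse I assume $H$ acts everywhere locally freely and I want to manufacture a metric in $[g]$ preserved by $H$. Local freeness means that for every $x$ the orbital map $\theta_x:\h\to T_xM$, $X\mapsto X(x)$, is injective, so the orbits form a $3$-dimensional foliation of $M$ whose leaves are the images of $\theta_x$. Fix a representative $g$ of the conformal class and, on $\h\cong\sl(2,\R)$, record the induced family of symmetric bilinear forms $b_x(X,Y):=g_x(\theta_xX,\theta_xY)$. Since $h_*\theta_x(X)=\theta_{hx}(\Ad(h)X)$ and $h$ is conformal with distortion $\lambda(x,h)$, these forms obey $b_{hx}(\Ad(h)X,\Ad(h)Y)=\lambda(x,h)\,b_x(X,Y)$; in other words the conformal class $[b_x]$, viewed as a point of the projective space $\mathbf{P}(\mathrm{Sym}^2\h^*)$, depends $\Ad$-equivariantly on $x$. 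Because $\sl(2,\R)$ is simple its adjoint representation is irreducible, so by Schur's lemma the invariant symmetric bilinear forms are exactly the multiples of the Killing form $\kappa$; hence $[\kappa]$ is the unique $\Ad(H)$-fixed point of $\mathbf{P}(\mathrm{Sym}^2\h^*)$. The whole problem thus reduces to the claim that $[b_x]=[\kappa]$, i.e. $b_x$ is proportional to $\kappa$, for every $x$.

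\textbf{The crux: pinning the leafwise conformal type to the Killing form.} This is where I expect the real difficulty. Fix an $\sl(2)$-triple $(X,Y,Z)$ and the hyperbolic one-parameter subgroup $a^t=\exp(tX)$; then $\Ad(a^t)$ is $\R$-split, so the induced flow on $\mathbf{P}(\mathrm{Sym}^2\h^*)$ is gradient-like, with fixed-point set the projectivized weight spaces and every trajectory attracted, as $t\to\pm\infty$, to the extreme-weight classes. The map $x\mapsto b_x$ is continuous and $M$ is compact, so $\{b_x\}_{x\in M}$ is a bounded family of forms; combining this boundedness with the equivariance relation forces, at any point $y$ recurrent for the flow $a^t$ (such points exist by the Birkhoff recurrence theorem on the compact space $M$), the value $b_y$ to be a limit of rescaled iterates $\lambda(y,a^{t_k})\,\Ad(a^{-t_k})^* b_y$ and hence to collapse onto a single weight component. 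The geometric input that breaks the symmetry is that $b_x$ is the restriction of a Lorentzian metric to a $3$-plane, so its radical has dimension at most $1$; this rules out the rank-one extreme-weight limits and confines the recurrent values to the weight-zero plane, which contains $[\kappa]$. Propagating this along orbits by equivariance and eliminating the remaining intermediate (degenerate, rank-two) configurations --- the genuinely delicate point, handled by a finer recurrence analysis using several hyperbolic subgroups and the invariance of the eigenvalue-ratios of $b_x$ relative to $\kappa$ as honest $H$-invariant functions on $M$ --- yields $[b_x]=[\kappa]$ everywhere.

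\textbf{Building the invariant metric.} Once $b_x=\mu(x)\kappa$ with $\mu$ smooth and nowhere vanishing, $\mu$ has constant sign on the connected manifold $M$, and after possibly replacing $\kappa$ by $-\kappa$ I may take $\mu>0$. Set $\varphi:=1/\mu$ and $g':=\varphi g\in[g]$. Then the form induced by $g'$ on each leaf is exactly $\kappa$, independently of the point: for leaf vectors $u=\theta_xX$, $v=\theta_xY$ one has $g'_x(u,v)=\kappa(X,Y)$. Since $\kappa$ is $\Ad$-invariant, the same computation at $hx$ gives $(h^*g')_x(u,v)=g'_{hx}(\theta_{hx}\Ad(h)X,\theta_{hx}\Ad(h)Y)=\kappa(\Ad(h)X,\Ad(h)Y)=\kappa(X,Y)=g'_x(u,v)$. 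But $h$ remains conformal for $g'$, so $(h^*g')_x=\lambda'(x,h)\,g'_x$ for a single scalar $\lambda'(x,h)$; evaluating on a pair of leaf vectors with $\kappa(X,Y)\neq 0$ forces $\lambda'(x,h)=1$, whence $h^*g'=g'$ for all $h\in H$. Thus $H$ acts isometrically for $g'$ and is inessential, completing the equivalence.
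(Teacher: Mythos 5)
Your easy direction and your final renormalization step are fine, but the step you yourself call the crux --- pinning $[b_x]=[\kappa]$ --- is not actually proved, and the mechanism you propose cannot close it. The $\ad(X)$-weight decomposition of $\mathrm{Sym}^2\h^*$ has a \emph{two-dimensional} weight-zero component, spanned by $(X^*)^2$ and $Y^*\odot Z^*$, so even granting your recurrence argument you only confine $b_y$ to a projective line of forms $a(X^*)^2+b\,Y^*\odot Z^*$. For $a,b\neq 0$ these are nondegenerate Lorentzian forms, not the ``degenerate, rank-two configurations'' you claim remain, and exactly one ratio $a:b$ gives $[\kappa]$ (for an $\sl(2)$-triple one has $\kappa(X,X)=\kappa(Y,Z)$). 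Crucially, \emph{every} form in this line is $\Ad(e^{tX})$-invariant, so no recurrence analysis of that single flow, however fine, can distinguish it from $\kappa$; and your fallback of ``several hyperbolic subgroups'' fails because a point recurrent for one hyperbolic flow has no reason to be recurrent for another, so the two confinement statements cannot be intersected at a common point. What pins the ratio is invariance in the \emph{unipotent} direction: $(\ad_Y^*q)(X,Z)=q(Y,Z)-q(X,X)$, so one needs a single point where $q_x$ is conformally invariant under the whole amenable group $A^+U$ at once --- and that is exactly what the paper extracts from the Zimmer/Bader--Frances--Melnick embedding theorem (Proposition \ref{prop:tangential_information}), a measure-theoretic input that your purely topological sketch omits. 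A smaller hole in the same step: ``radical of dimension $\leq 1$'' does not exclude the weight $\pm 1$ components such as $X^*\odot Y^*$, whose radical is exactly one-dimensional; you need the finer fact that the restriction of a Lorentzian metric to a $3$-plane is Lorentzian, Riemannian, or positive semidefinite with one-dimensional radical, never of type $(1,1)+\mathrm{rad}$.

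The proof the paper relies on (from \cite{article3}, whose ingredients appear here as Lemmas \ref{lem:centralizer_inessential} and \ref{lem:couple_inessential}, Proposition \ref{prop:tangential_information}, and again in Section \ref{s:extending_flatness}) avoids your pointwise proportionality claim altogether. It uses the embedding theorem plus the algebraic Fact only to conclude that $g(X,X)>0$ everywhere; it then renormalizes $g_0:=g/g(X,X)$, applies Obata's lemma to make $\phi_X^t$ isometric for $g_0$, and uses the cocycle-growth/compactness argument of Lemma \ref{lem:couple_inessential} (the identity $g_0(Y,Y)\circ\phi_X^t=e^{2t}g_0(Y,Y)$ with a bounded left-hand side forces $g_0(Y,Y)\equiv 0$, and similarly for the other products, after which the flows of $Y$ and $Z$ are seen to be isometric) to conclude that all of $\h$ consists of Killing fields of $g_0$. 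Proportionality of the orbit metric to the Killing form then comes out \emph{a posteriori}, via Gromov's description of isometric $\SL(2,\R)$-actions, rather than serving as the input your construction requires; as it stands, the only visible way to justify your intermediate claim is to first prove the proposition by the paper's route, which defeats your strategy.
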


The aim of this first section is to improve this statement. Precisely, we will see that, when they exist, conformal actions of Lie groups locally isomorphic to $\SL(2,\R)$ characterize the essentiality of the full identity component of the conformal group. Coupled with the conclusion of Theorem \ref{thm:main}, this observation will directly give Corollary \ref{cor:identity_component}.

\vspace*{.2cm}

Recall the following fact.

\begin{lemma}[\cite{obata70}, Theorem 2.4]
\label{lem:centralizer_inessential}
Let $(M,g)$ be a pseudo-Riemannian manifold and $X \in \Kill(M,[g])$ be a conformal vector field. If $X$ is nowhere light-like, then $\forall f \in \Conf(M,g)$ such that $f^* X = X$, we have $f \in \Isom(M, \frac{g}{|g(X,X)|})$.
\end{lemma}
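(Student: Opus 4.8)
The plan is to verify directly that the conformal distortion of $f$ with respect to the rescaled metric $\bar{g} := g/|g(X,X)|$ is identically $1$. First I would put the two hypotheses in usable form. Since $f$ is conformal for $g$, there is a smooth conformal distortion $\varphi : M \to \R_{>0}$ with $f^*g = \varphi\, g$; concretely, $g_{f(p)}(\d f_p\, u, \d f_p\, v) = \varphi(p)\, g_p(u,v)$ for every $p$ and all tangent vectors $u,v$. The hypothesis $f^*X = X$ means that $f$ commutes with the flow of $X$, equivalently $\d f_p(X_p) = X_{f(p)}$ for every $p$.

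The key step is a pointwise computation of $g(X,X)$ at the image point. Using $X_{f(p)} = \d f_p(X_p)$ and then the conformal relation, one gets
\begin{equation*}
g_{f(p)}(X_{f(p)}, X_{f(p)}) = g_{f(p)}(\d f_p X_p, \d f_p X_p) = \varphi(p)\, g_p(X_p, X_p),
\end{equation*}
hence $|g(X,X)|_{f(p)} = \varphi(p)\, |g(X,X)|_p$, since $\varphi(p) > 0$. Here the hypothesis that $X$ is nowhere light-like guarantees that $g(X,X)$ never vanishes; as $M$ is connected it has constant sign, so $|g(X,X)|$ is a smooth positive function and $\bar{g}$ is a genuine pseudo-Riemannian metric lying in the conformal class $[g]$.

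It then remains to pull back $\bar{g}$. For any $p$,
\begin{equation*}
(f^* \bar{g})_p = \frac{1}{|g(X,X)|_{f(p)}}\, (f^* g)_p = \frac{\varphi(p)}{\varphi(p)\, |g(X,X)|_p}\, g_p = \frac{1}{|g(X,X)|_p}\, g_p = \bar{g}_p,
\end{equation*}
using $(f^*g)_p = \varphi(p)\, g_p$ together with the distortion identity for $g(X,X)$ obtained above. Thus $f^*\bar{g} = \bar{g}$, that is $f \in \Isom(M, \bar{g})$, which is the assertion.

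I do not expect a genuine obstacle: the statement amounts to a one-line cocycle identity for the conformal distortion combined with the $f$-invariance of $X$. The only points requiring minor care are the pullback conventions — making sure $\d f_p(X_p) = X_{f(p)}$ is the correct reading of $f^*X = X$ — and the smoothness and positivity of $|g(X,X)|$, which is precisely where the nowhere-light-like hypothesis is used.
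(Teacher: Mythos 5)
Your proof is correct. Note that the paper itself does not prove this lemma at all: it is quoted verbatim as Theorem 2.4 of Obata's paper \cite{obata70}, so there is no in-paper argument to compare against; your direct verification via the cocycle identity $|g(X,X)|_{f(p)} = \varphi(p)\,|g(X,X)|_p$ is the standard (and essentially the only) way to establish it, and every step checks out. Two minor remarks: your computation never uses that $X$ is a \emph{conformal} vector field --- only that $X$ is nowhere light-like and $f$-invariant --- so your argument proves a slightly more general statement than the one quoted; and the appeal to connectedness of $M$ is unnecessary, since $|g(X,X)|$ is smooth and positive wherever $g(X,X)$ is nonvanishing, regardless of whether its sign is constant.
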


The arguments of the proof of Proposition 2.1 of \cite{article3} give the following lemma, that will be reused later in this article.

\begin{lemma}
\label{lem:couple_inessential}
Let $X$ and $Y$ be two complete conformal vector fields of a pseudo-Riemannian manifold $(M,g)$, satisfying $[X,Y] = \lambda Y$ for $\lambda \in \R$ and $g(X,X) > 0$. Let $g_0 := g / g(X,X)$. If the functions $g_0(Y,Y)$ and $g_0(X,Y)$ are bounded along the orbits of $\phi_X^t$, then $X$ and $Y$ are Killing vector fields of $g_0$ and $Y$ is everywhere light-like and orthogonal to $X$.
\end{lemma}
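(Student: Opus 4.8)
The plan is to exploit the tension between two facts: the flow $\phi_X^t$ is an \emph{isometry} of $g_0$, whereas the vector field $Y$ is \emph{rescaled exponentially} by this same flow. Consequently the $g_0$-scalar products $g_0(Y,Y)$ and $g_0(X,Y)$ cannot be invariant along the orbits of $\phi_X^t$; they must grow exponentially, and the boundedness hypothesis then forces them to vanish. I would begin by recording that $X$ is Killing for $g_0$. Since $g(X,X)>0$ everywhere, $X$ is nowhere light-like and $g_0 = g/|g(X,X)|$. The flow $\phi_X^t$ is conformal and commutes with its own generator, so $(\phi_X^t)^*X = X$; Lemma~\ref{lem:centralizer_inessential} then yields $\phi_X^t \in \Isom(M,g_0)$ for every $t$, hence $\mathcal{L}_X g_0 = 0$. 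Note also that $g_0(X,X)\equiv 1$ by construction.

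Next, from $\mathcal{L}_X Y = [X,Y] = \lambda Y$ and the standard identity $\frac{\d}{\d t}(\phi_X^t)^*Y = (\phi_X^t)^*\mathcal{L}_X Y$, I obtain the linear ODE $\frac{\d}{\d t}(\phi_X^t)^*Y = \lambda\,(\phi_X^t)^*Y$, whence $(\phi_X^t)^*Y = e^{\lambda t}\,Y$ (completeness of $X$ guarantees $\phi_X^t$ is defined for all $t$). Combining this with $(\phi_X^t)^*g_0 = g_0$ and $(\phi_X^t)^*X = X$, and using that pullback by a diffeomorphism commutes with metric contractions, gives
\begin{equation*}
g_0(Y,Y)\circ\phi_X^t = e^{2\lambda t}\,g_0(Y,Y), \qquad g_0(X,Y)\circ\phi_X^t = e^{\lambda t}\,g_0(X,Y).
\end{equation*}
Assuming $\lambda\neq 0$ (the relevant case, e.g. a weight $\pm 1$ coming from an $\sl(2)$-triple; for $\lambda=0$ the conclusion genuinely fails, as commuting spacelike/timelike translations in Minkowski space show), the left-hand sides stay bounded in $t$ by hypothesis, while the right-hand sides blow up as $t\to+\infty$ or $t\to-\infty$ unless the scalar products vanish at the base point. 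Since the base point is arbitrary, $g_0(Y,Y)\equiv 0$ and $g_0(X,Y)\equiv 0$, i.e. $Y$ is everywhere light-like and orthogonal to $X$.

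It remains to show $Y$ is Killing for $g_0$. As a conformal field of $g$, $Y$ is conformal for the conformally equivalent metric $g_0$, so $\mathcal{L}_Y g_0 = 2\mu\,g_0$ for some function $\mu$. Contracting with $X$ and using $g_0(X,X)\equiv 1$ gives $2\mu = (\mathcal{L}_Y g_0)(X,X)$; expanding the Lie derivative and using $[Y,X] = -\lambda Y$ together with the orthogonality just established,
\begin{equation*}
(\mathcal{L}_Y g_0)(X,X) = Y\!\cdot\! g_0(X,X) - 2\,g_0([Y,X],X) = 0 + 2\lambda\,g_0(X,Y) = 0.
\end{equation*}
Hence $\mu\equiv 0$ and $Y$ is Killing for $g_0$, completing the argument.

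The main obstacle, and the one genuinely clever point, is the first step: recognizing via Obata's lemma that $\phi_X^t$ is forced to be a \emph{$g_0$-isometry}, so that the entire non-invariance of $g_0(Y,Y)$ and $g_0(X,Y)$ along the flow is attributable to the exponential rescaling of $Y$. Once this is in place, the rest is the robust boundedness-versus-exponential-growth dichotomy plus a one-line Lie-derivative computation; the only delicate bookkeeping is keeping the pullback conventions and the sign of $\lambda$ consistent (which is harmless since both time directions are available).
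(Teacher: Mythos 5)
Your treatment of the case $\lambda \neq 0$ is correct and follows essentially the paper's own strategy: Lemma~\ref{lem:centralizer_inessential} makes $\phi_X^t$ a $g_0$-isometry, the relation $(\phi_X^t)^*Y = e^{\lambda t}Y$ then forces $g_0(Y,Y)$ and $g_0(X,Y)$ to scale exponentially along orbits, and boundedness kills both. Only your last step differs: you prove $\mathcal{L}_Y g_0 = 0$ infinitesimally, writing $\mathcal{L}_Y g_0 = 2\mu g_0$ and contracting with $X$, whereas the paper integrates the bracket relation to get $(\phi_Y^t)_* X_x = X_{\phi_Y^t(x)} + tY_{\phi_Y^t(x)}$ and then shows the conformal distortion of $\phi_Y^t$ is identically $1$ by evaluating on $X$. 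The two computations carry the same content (yours is the infinitesimal version and is a touch more economical), and both rest on the same two facts: $g_0(X,X)\equiv 1$, and $Y$ light-like and orthogonal to $X$.

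The genuine divergence is at $\lambda = 0$, and there your proposal leaves a gap relative to what the paper needs. You are right that the conclusion as literally stated fails there: light-likeness of $Y$ is indeed refuted by two commuting translations in Minkowski space, so the statement is imprecise as written. But the paper's proof normalizes $\lambda \in \{0,1\}$ and, for $\lambda = 0$, still extracts the part of the conclusion that matters: $[X,Y]=0$ gives $(\phi_Y^t)^* X = X$, so Lemma~\ref{lem:centralizer_inessential} applies directly to $f = \phi_Y^t$ and yields $\phi_Y^t \in \Isom(M,g_0)$, i.e.\ $Y$ is Killing for $g_0$. This case is not idle: in the proof of Corollary~\ref{cor:identity_component} the lemma is applied to the couples $(X,X_i)$, where $(X_1,\ldots,X_N)$ is a basis of eigenvectors of $\ad(X)$ acting on $\g$, and the eigenvalue $0$ does occur (for $X$ itself and for anything centralizing it); what is used there is precisely the Killing conclusion. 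By discarding $\lambda = 0$ wholesale, your proof cannot support that application. The fix is one line: for $\lambda = 0$, conclude (via Lemma~\ref{lem:centralizer_inessential}) that $X$ and $Y$ are Killing fields of $g_0$, and reserve the light-like/orthogonality assertion for $\lambda \neq 0$.
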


\begin{proof}
Replacing $X$ by $X/\lambda$ if necessary, we can assume that $\lambda \in \{0,1\}$. We still note $g$ the renormalized metric $g / g(X,X)$ (to clarify notations). In any case, since $X$ is preserved by the flow it generates, Lemma \ref{lem:centralizer_inessential} ensures that $\mathcal{L}_X g = 0$.

If $\lambda = 0$, applying Lemma \ref{lem:centralizer_inessential}, we immediately get that $Y$ also preserves $g$.

If $\lambda = 1$, we have $(\phi_{X}^t)_* Y_x = \e^{-t} Y_{\phi_{X}^t(x)}$, and because $\{\phi_{X}^t\} \subset \Isom(M,g)$, we obtain $g_{\phi_{X}^t(x)}(Y,Y) = \e^{2t} g_x(Y,Y)$ and $g_{\phi_{X}^t(x)}(X,Y) = \e^t g_x(X,Y)$. Since we assumed the functions $\{x \mapsto g_x(Y,Y)\}$ and $\{x \mapsto g_x(X,Y)\}$ bounded along any $\phi_X^t$-orbit, we must have $g(Y,Y) = g(X,Y) = 0$ everywhere. Now, the relation $[Y,X] = -Y$ gives $(\phi_ {Y}^t)_* X_x = X_{\phi_ {Y}^t(x)} + t  {Y}_{\phi_ {Y}^t(x)}$. Let $\lambda(x,t)$ be the conformal distortion of $\phi_Y^t$ with respect to $g$. Using that $Y$ is light-like and orthogonal to $X$, we get
\begin{equation*}
\lambda(x,t) g_x(X,X) = g_{\phi_ {Y}^t(x)}(X,X).
\end{equation*}
By construction, the map $\{x \mapsto	g_x(X,X)\}$ is constant equal to $1$. This gives $\lambda(x,t) \equiv 1$, \textit{i.e.} $\phi_ {Y}^t$ is an isometry of $g$.
\end{proof}

\paragraph*{Proof of Corollary \ref{cor:identity_component}.}

Let $(M,g)$ be a compact Lorentzian manifold (recall that we always assume $\dim M \geq 3$) and let $G$ be the identity component of its conformal group. Assume that $G$ contains an immersed Lie subgroup $H \hookrightarrow G$, locally isomorphic to $\SL(2,\R)$. \textit{A priori}, $H$ may not be properly embedded, but we do not need to assume it.

\vspace*{.2cm}

We claim that $G$ is inessential if and only if $H$ is inessential. The non-trivial part of this statement is that if $H$ preserves a metric $g_0$ conformal to $g$, then so does $G$. Let $(X,Y,Z)$ be an $\sl(2)$-triple in $\h$. Since $H$ acts by isometries on $(M,g_0)$, it acts locally freely everywhere and, up to a constant positive factor, the ambient metric induces the Killing metric on the orbits. In particular, the Killing vector field $X$ satisfies $g(X,X) > 0$ everywhere. The adjoint representation $\ad : \h \rightarrow \gl(\g)$ is a representation of $\sl(2,\R)$ on a finite dimensional space. Since $\R.X$ is a Cartan subspace of $\h$, we have that $\ad(X)$ acts diagonally on $\g$. Thus, if $(X_1,\ldots,X_N)$ is a basis of eigenvectors, by compactness of $M$ we can apply Lemma \ref{lem:couple_inessential} to every couples $(X,X_i)$ and conclude that if $g_1$ denotes $g / g(X,X)$, then $\mathcal{L}_{X_i} g_1 = 0$ for all $i$. By connectedness of $G$, we obtain $G = \Isom(M,g_1)_0$.

Corollary \ref{cor:identity_component} is now immediate: if $G$ is essential, then $H$ acts essentially and by Theorem \ref{thm:main}, $(M,g)$ must be conformally flat. We are now reduced to consider conformal essential actions of Lie groups locally isomorphic to $\SL(2,\R)$ on compact Lorentzian manifolds.

\section{Minimal compact subsets of an essential action}
\label{s:minimal_subsets}
In the previous section, we recalled that essential conformal actions are characterized by the fact that they are not everywhere locally free. Naturally, the dynamics in, and near, the closed subset where the action is not locally free plays a central role in the proof of Theorem \ref{thm:main}. This section focuses on its \textit{minimal} compact invariant subsets. 

\vspace*{.2cm}

Precisely, we are now going to establish the first main part of the following proposition, that will be completely proved at the end of the article.

\begin{proposition}
\label{prop:minimal_subsets}
Let $H$ be a connected Lie group locally isomorphic to $\SL(2,\R)$. Assume that $H$ acts conformally and essentially on a compact Lorentzian manifold $(M,g)$. Let $K$ be a minimal $H$-invariant subset. Then, $K$ is either
\begin{enumerate}
\item A global fixed point of the action;
\item Exclusively formed of $1$ dimensional orbits;
\item A compact, positive-degenerate, $2$-dimensional orbit, diffeomorphic to a $2$-torus. This orbit is an homogeneous space of the form
\begin{equation*}
\PSL_k(2,\R) / (\Z \ltimes U),
\end{equation*}
where $\PSL_k(2,\R)$ is the $k$-sheeted cover of $\PSL(2,\R)$, $U$ denotes a unipotent one parameter subgroup and the factor $\Z$ is generated by an element $f$ normalizing $U$ and whose projection in $\PSL(2,\R)$ is hyperbolic.
\end{enumerate}
\end{proposition}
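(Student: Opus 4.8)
The plan is to read the three cases off the common dimension of the orbits in $K$, the only genuine work being the $2$-dimensional case. First I would use that the orbit-dimension function $x \mapsto \dim H.x = \dim H - \dim\Stab(x)$ is lower semi-continuous, so that for every $d$ the set $\{x : \dim H.x < d\}$ is closed and $H$-invariant. Taking $d$ to be the maximal orbit dimension attained on $K$ and invoking minimality forces this closed invariant subset to be empty; hence \emph{all} orbits of $K$ share the same dimension $d$. If $d=0$ a single fixed point is already closed and invariant, so $K$ is one global fixed point (case 1); if $d=1$ we are in case (2) with nothing more to say; and since we work inside the non-locally-free locus $F_{\leq 2}$ (Proposition \ref{prop:locally_free_sl2_actions}), the locally free value $d=3$ does not occur. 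So the entire content is the case $d=2$, and there the goal is case (3): the stabilisers are unipotent and $K$ is a single closed orbit.

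For $d=2$ the isotropy algebra $\mathfrak s := \Lie\Stab(x) \subset \sl(2,\R)$ is a line, conjugate to $\R X$ (hyperbolic), to a unipotent line $\R Y$, or to an elliptic one. The orbit inherits the $H$-invariant conformal class carried by $g$, which is pinned down by the isotropy action of $\Stab(x)$ on $\h/\mathfrak s$. A one-line computation of $\ad$ on $\h/\mathfrak s$ identifies the three possibilities: the hyperbolic line gives the Lorentzian class (isotropy $\mathrm{diag}(\e^t,\e^{-t})$, preserving $\d y\,\d z$), the elliptic line the Riemannian class, and the unipotent line the unique \emph{degenerate} invariant class on $\R^2$ (the isotropy is unipotent, whose only invariant conformal structure is $[\d y^2]$, with radical the image of $\mathfrak s$). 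Since a degenerate $2$-plane in Lorentzian signature necessarily has signature $(+,0)$ — its null line is the radical and the transverse line sits in a positive-definite quotient — the degenerate case is automatically positive-degenerate. I would therefore reduce the whole proposition to producing a \emph{non-trivial} periodic orbit of the hyperbolic flow $\phi_X^t$ inside $K$.

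The reduction itself is clean. Suppose $p \in K$ satisfies $\phi_X^{t_0}(p)=p$ with $t_0\neq 0$ and $X_p\neq 0$. Then $f:=\exp(t_0 X)\in\Stab(p)$ is hyperbolic, while $X\notin\mathfrak s$, so $f$ normalises $\mathfrak s=\Lie\Stab(p)^0$ and $\mathfrak s\neq \R X$. As $\Ad(f)$ has distinct eigenvalues on $\R Y,\R X,\R Z$, its only invariant lines are these three; hence $\mathfrak s\in\{\R Y,\R Z\}$, i.e. $\Stab(p)^0=U$ is unipotent and the orbit is positive-degenerate. Consequently $\Stab(p)=U\rtimes\langle f\rangle\cong \Z\ltimes U$ with $f\in N(U)=B$ hyperbolic, and the orbit is $\PSL_k(2,\R)/(\Z\ltimes U)$. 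Because $U\rtimes\langle f\rangle$ is cocompact in $H$ — it lies in the Borel $B$ with $B/(U\rtimes\langle f\rangle)\cong\S^1$ while $H/B\cong\R P^1$ — the orbit is a compact torus, hence closed, and minimality gives $K=H.p$, which is exactly case (3).

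The hard dynamical core is producing such a non-trivial periodic orbit, and this is where I expect the real difficulty. I would fix an ergodic $\phi_X^t$-invariant probability measure $\mu$ carried by the compact invariant set $K$ and apply Oseledec's theorem to the derivative cocycle of the conformal flow $\phi_X^t$, which splits into its conformal-distortion factor $\lambda(\cdot,t)$ and an orthogonal part. The crucial input is that essentiality forbids all Lyapunov exponents from vanishing: were they all zero, the conformal distortion would be sub-exponential along $\mu$-generic orbits, and then tracking $g_0(Y,Y)$ and $g_0(X,Y)$ along $\phi_X$-orbits exactly as in Lemma \ref{lem:couple_inessential} would force $X,Y,Z$ to preserve a metric in $[g]$, contradicting essentiality. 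Thus some ergodic measure on $K$ is hyperbolic, and Katok's closing lemma furnishes periodic orbits of $\phi_X^t$ shadowing $\mu$-generic (and, for a measure not concentrated on fixed points, non-fixed) trajectories, hence non-trivial. The main obstacle is precisely this passage: extracting a genuinely hyperbolic ergodic measure from the sole hypothesis of essentiality, and ensuring that the periodic orbits delivered by the closing lemma actually lie in $K$ — equivalently, that their compact $H$-orbits, being closed and invariant, coincide with the minimal set $K$ — rather than merely approximating it. Controlling this interaction between Pesin theory and the minimal set is the heart of the argument.
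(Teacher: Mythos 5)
Your reduction step (a non-trivial periodic orbit of $\phi_X^t$ through $p \in K$ with $X_p \neq 0$ forces $\Stab(p)^0$ unipotent, $\Stab(p) \cong \Z \ltimes U$, hence a compact conical orbit which by minimality equals $K$) is sound and matches the paper's endgame. But the dynamical core — actually producing that periodic orbit \emph{inside} $K$ — is where your argument breaks, and it breaks exactly where you flag it. The dichotomy ``either some ergodic measure on $K$ is hyperbolic, or all exponents vanish and the action is inessential'' is a non-sequitur: zero exponents for one measure on $K$ would at best let you build an invariant metric along or near $K$, and that does not contradict essentiality, which is a global property of the action on $M$ (inessentiality requires a rescaled metric preserved everywhere, cf.\ Section \ref{s:inessential}). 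Worse, the renormalization you borrow from Lemma \ref{lem:couple_inessential} needs $g(X,X)>0$ on the relevant region, whereas on a degenerate $2$-dimensional orbit the hyperbolic field $X$ is \emph{isotropic} — this is precisely what the paper computes at its special point — so $g/g(X,X)$ is not even defined there. The paper's mechanism is different and unconditional: it first applies the embedding theorem (Proposition \ref{prop:tangential_information}, with $S$ the amenable subgroup with Lie algebra $\Span(X,Y)$) to produce a point $x_0 \in K$ whose stabilizer algebra is the unipotent line $\R Y$ and at which the orbit is degenerate with $g_{x_0}(Z,Z)>0$; it then renormalizes by $g_0 = g/g(Z,Z)$ on a neighborhood $\Omega$ of $K_0 = \overline{\{\phi_X^t(x_0)\}}$, so that $\phi_X^t$ becomes a uniform homothety, $(\phi_X^t)^*g_0 = e^{2t}g_0$. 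For a homothetic flow every ergodic invariant measure is automatically hyperbolic: any Oseledec vector with non-positive exponent must be isotropic, and Lorentzian signature admits only one isotropic direction. No appeal to essentiality is needed at this stage.

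Two further gaps. First, your concern about Katok's closing lemma is fatal as stated: it yields periodic orbits \emph{near} $K$, not in $K$, and nothing in your argument places them back inside the minimal set, which your reduction requires (you need $p \in K$ so that minimality gives $K = H.p$). The paper avoids the closing lemma altogether: with all transverse exponents of one sign, the Pesin local stable manifolds of $\varphi^t = \phi_X^{-t}$ have codimension one, and a flow-box contraction argument (Picard fixed point for $\pi_x \circ \varphi^T$ on $\overline{W^s_{\mathrm{loc}}(x)}$) shows that a recurrent point $x \in K$ \emph{itself} lies on a periodic orbit. Second, your dismissal of the case $d=3$ is incorrect: a minimal $H$-invariant subset of $M$ need not meet $F_{\leq 2}$ — essentiality only guarantees $F_{\leq 2} \neq \emptyset$ — so ``$d=3$ does not occur'' is not free. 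The paper explicitly defers its exclusion to Section \ref{s:extending_flatness}, where it consumes the entire conformal-flatness extension argument.
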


As for a general $\mathcal{C}^1$-action of a Lie group $H$, the map $x \in M \mapsto \dim H.x$ is lower semi-continuous. So, for any $x \in M$ and $y \in \bar{H.x}$, we have $\dim H.y \leq \dim H.x$. This elementary observation implies that all orbits in a minimal compact $H$-invariant subset have the same dimension. If this common dimension is $0$, by connectedness of $H$, $K$ is reduced to a global fixed point. Thus, Proposition \ref{prop:minimal_subsets} essentially says:
\begin{enumerate}
\item There does not exist a compact invariant subset where all orbits have dimension $3$;
\item When all orbits have dimension $2$, $K$ is reduced to the compact orbit of the third point of the proposition.
\end{enumerate} 

We leave in suspense the question of compact invariant subset in the neighborhood of which the action is locally free, their non-existence will be established in Section \ref{s:extending_flatness}. This section is devoted to the proof of the second point. Before starting the proof, let us describe this $2$-dimensional orbit more geometrically.

\subsection{Compact conical orbits of $\PSL(2,\R)$}

Consider the linear action of $\SO_0(1,2) \simeq \PSL(2,\R)$ on the $3$-dimensional Minkowski space $\R^{1,2}$. It acts transitively on the future nullcone $\mathcal{N}^+ = \{(x_1,x_2,x_3) \ | \ x_1^2 = x_2^2 + x_3^2, \ x_1 > 0\}$. Consider now the \textit{Hopf manifold} $(M,g) := (\R^{1,2} \setminus \{0\}) / <\lambda \id>$, $\lambda > 1$. Since the homothety $\lambda \id$ acts conformally on $\R^{1,2}$ and is centralized by $\SO(1,2)$, the latter acts conformally and faithfully on the quotient manifold. In particular, the projection of the nullcone $\mathcal{N}^+ / <\lambda \id>$ is an orbit of $\PSL(2,\R)$, conformally diffeomorphic to $\S^1 \times \S^1$ with the non-negative degenerate metric $\d x_1^2$ (if $x_1$ is the coordinate on the first factor $\S^1$).

If $v \in \mathcal{N}^+$, let $[v]$ denote its projection in the Hopf manifold. The stabilizer of $[v]$ is the group of elements of $\SO_0(1,2)$ preserving $\{\lambda^nv, \ n \in \Z\}$. So, it is included in the stabilizer of the line $\R.v$, which is isomorphic to the affine group $A^+ U < \SO_0(1,2)$, where in a suitable basis of $\R^{1,2}$ starting by $v$, we note
\begin{equation*}
A^+ =
\left \{
\begin{pmatrix}
e^t & & \\
 & 1 & \\
 & & e^{-t}
\end{pmatrix}
, \ t \in \R
\right \}
\text{ and }
U =
\left \{
\begin{pmatrix}
1&t&-t^2/2 \\
0 & 1 & -t \\
0 & 0 & 1
\end{pmatrix}
, \ t \in \R
\right \}.
\end{equation*}
So, the stabilizer of $[v]$ in $\SO_0(1,2)$ is the semi-direct product $< \! \! f \! \! > \! \ltimes \, U$ where
\begin{equation*}
f=
\begin{pmatrix}
\lambda & & \\
& 1 & \\
& & \lambda^{-1}
\end{pmatrix}
\in A^+.
\end{equation*}

Consequently, if $U < \PSL(2,\R)$ is a unipotent one parameter subgroup and if $f \in \PSL(2,\R)$ is hyperbolic an normalizes $U$, we say that $\PSL(2,\R) / (< \! \! f \! \!> \! \ltimes \, U)$ is a \textbf{compact conical} homogeneous space.

More generally, let $H$ be a connected Lie group locally isomorphic to $\SL(2,\R)$, let $\mathcal{Z}$ denote its center and $p : H \rightarrow H/\mathcal{Z} \simeq \PSL(2,\R)$ the natural covering. We say that a homogeneous space $H/H'$ is compact conical if $\mathcal{Z} \cap H'$ has finite index $k \geq 1$ in $\mathcal{Z}$ and $p(H)/p(H')$ is a $\PSL(2,\R)$-compact conical homogeneous space. Note that $H/H'$ is in fact a $\PSL_k(2,\R)$-homogeneous space.

In any event, a compact conical homogeneous space is diffeomorphic to a $2$-torus, homogeneous under some $\PSL_k(2,\R)$, with $k \geq 1$, and it is endowed with the $\PSL_k(2,\R)$-invariant conformal class of non-negative degenerate metrics it inherits from $\mathcal{N}^+$.

\subsection{Proof of Proposition \ref{prop:minimal_subsets} for $2$-dimensional orbits}
\label{ss:proof_2_dimensional_orbits}

Let $(M,g)$ be a compact Lorentzian manifold and $H$ a Lie group locally isomorphic to $\SL(2,\R)$ acting conformally on $(M,g)$. Let $K \subset M$ be a minimal compact $H$-invariant subset such that for all $x \in K$, $\dim H.x = 2$. The aim of this section is to prove that $K$ is a compact conical orbit.

\subsubsection{Tangential information}
\label{sss:tangential_information}

The first step is to observe that the restriction of the ambient metric to any orbit in $K$ is degenerate. To do so, we reuse the following proposition whose proof can be found in \cite{article3}. It is based on the main result of \cite{bader_frances_melnick}, an adaptation of Zimmer's embedding theorem to Cartan geometries.

If $x \in M$, we note $\h_x = \{X \in \h \ | \ X(x) = 0\}$ the Lie algebra of the stabilizer of $x$. Differentiating the orbital map $H \rightarrow H.x$, we obtain a natural identification $T_x(H.x) \simeq \h / \h_x$, so that $\h / \h_x$ inherits a quadratic form $q_x$ from the ambient metric $g_x$.

Let $S<H$ be either an hyperbolic or parabolic one-parameter subgroup, or a connected Lie subgroup whose Lie algebra is isomorphic to the affine algebra $\aff(\R)$. In fact, $S$ is chosen this way because firstly, such groups are amenable, so that for every compact $S$-invariant subset $K \subset M$, there automatically exists an $S$-invariant finite measure whose support is contained in $K$, and secondly, the Zariski closure of $\Ad_{\h}(S)$ in $\GL(\h)$ does not contain any proper algebraic cocompact subgroup. This ensures that we are in the field of application of Theorem 4.1 of \cite{bader_frances_melnick}.

\begin{proposition}[\cite{article3}, Prop. 2.2]
\label{prop:tangential_information}
Let $S<H$ be a subgroup as above. Every closed $S$-invariant subset $F$ contains a point $x$ such that $\Ad(S) \h_x \subset \h_x$ and the induced action $\bar{\Ad}(S)$ on $\h / \h_x$ is conformal with respect to $q_x$.
\end{proposition}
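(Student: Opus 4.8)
The plan is to reduce the two conclusions to a single dynamical statement about the isotropy data, and to obtain that statement from the invariant measure produced by amenability together with the embedding theorem of \cite{bader_frances_melnick}.

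First I would record an elementary remark that makes the role of $q_x$ transparent. Fix $x$ and $s\in S<H$. Since $S$ preserves the foliation by $H$-orbits, $d s_x$ maps $T_x(H.x)$ to $T_{sx}(H.x)$, and a direct computation with the orbital map shows that, under the identifications $T_x(H.x)\simeq\h/\h_x$ and $T_{sx}(H.x)\simeq\h/\h_{sx}$, this differential is exactly the map induced by $\Ad(s)$; in particular $\h_{sx}=\Ad(s)\h_x$. As $s$ is conformal, $d s_x$ is a conformal isomorphism, so $\bar{\Ad}(s):(\h/\h_x,q_x)\to(\h/\h_{sx},q_{sx})$ is automatically conformal. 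Consequently the proposition is equivalent to finding a point $x\in F$ at which the isotropy data is constant along the $S$-orbit, i.e. $\h_{sx}=\h_x$ and $q_{sx}\in\R_{>0}\,q_x$ for every $s\in S$: granting this, $\bar{\Ad}(s)$ becomes a conformal endomorphism of $(\h/\h_x,q_x)$, which yields both $\Ad(S)\h_x\subset\h_x$ and the conformality of $\bar{\Ad}(S)$.

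Next I would set up the machinery. Since $S$ is amenable and $F$ is compact and $S$-invariant, there is an $S$-invariant probability measure $\mu$ with $\Supp\mu\subset F$. The conformal class $[g]$ is equivalent to a canonical Cartan geometry $(\hat M\to M,\omega)$ modeled on $\Ein^{1,n-1}=G/P$, with $\g=\so(2,n)$, on which $S$ acts by automorphisms preserving the lift of $\mu$. Choosing a measurable section $\sigma$ and the evaluation maps $\iota_{\sigma(x)}:\h\to\g$, $X\mapsto\omega_{\sigma(x)}(X^*)$ (here $X^*$ is the lift of the Killing field $X$), the $\omega$-preservation and the $P$-equivariance of $\omega$ give the transformation rule $\iota_{\sigma(sx)}\circ\Ad(s)=\Ad(\alpha(s,x))\circ\iota_{\sigma(x)}$, where $\alpha:S\times M\to P$ is the isotropy cocycle and $\h_x=\iota_{\sigma(x)}^{-1}(\p)$. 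Thus the $S$-dynamics of the isotropy data is governed by the $P$-valued cocycle $\alpha$.

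The heart of the proof is then Theorem 4.1 of \cite{bader_frances_melnick}: because $S$ is amenable (which furnishes $\mu$) and the Zariski closure of $\Ad_\h(S)$ contains no proper algebraic cocompact subgroup, the cocycle $\alpha$ admits at $\mu$-a.e. $x$ a reduction forcing the isotropy data $(\h_x,[q_x])$ to be $\Ad(S)$-invariant; equivalently $\h_{sx}=\h_x$ and $q_{sx}\in\R_{>0}\,q_x$. Since this good set has full $\mu$-measure and $\mu$ is carried by $F$, it meets $F$, producing the desired point. I expect the only genuine difficulty to be concentrated in this last step: invoking \cite{bader_frances_melnick} correctly requires checking its hypotheses (the amenable invariant measure and the Zariski-closure condition, the latter being precisely what rules out a non-conformal reduction) and handling the passage from an almost-everywhere conclusion to an actual point of $F$. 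Everything else is the formal reduction of the first paragraph.
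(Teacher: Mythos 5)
Your proposal is correct and takes essentially the same route as the paper's (namely the proof of Prop.~2.2 of \cite{article3}, which the paper cites): amenability of $S$ yields an invariant probability measure supported in the compact set $F$, and Theorem 4.1 of \cite{bader_frances_melnick} — applicable thanks to the Zariski-closure condition on $\Ad_{\h}(S)$ — gives at $\mu$-almost every point an algebraic intertwining of $\Ad_{\h}(S)$ with a subgroup of $\bar{\Ad}_{\g}(P)$ through the evaluation map $\iota_{\hx}$, from which $\Ad(S)\h_x \subset \h_x$ and the conformality of $\bar{\Ad}(S)$ with respect to $q_x$ follow, using $\h_x = \iota_{\hx}^{-1}(\p)$ together with the fact that $\bar{\Ad}(P)$ preserves $\p$ and acts conformally on $\g/\p$. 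Your opening reduction to constancy of the isotropy data along the $S$-orbit and your cocycle formulation are both correct, and the one step you leave implicit (deriving that invariance from the intertwining morphism) is exactly the short argument carried out in the cited proof.
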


If $S$ is chosen to be a connected Lie subgroup of $H$ locally isomorphic to $\Aff(\R)$, and if we apply Proposition \ref{prop:tangential_information} to $K$, we obtain a point $x_0 \in K$ satisfying the conclusions of the proposition. Let $(X,Y,Z)$ be an $\sl(2)$-triple of $\h$ such that $\s = \Span(X,Y)$. Since $\h_{x_0}$ is an $\ad(\s)$-invariant line of $\h$, it must be $\R.Y$. Thus, the adjoint action of $e^{tY}$ on $\h / \h_{x_0}$ is given in the basis $(\bar{Z},\bar{X})$ by
\begin{equation*}
\begin{pmatrix}
1 & 0 \\
t & 1
\end{pmatrix}.
\end{equation*}
This action being conformal with respect to $q_{x_0}$, we then have $\lambda \in \R$ such that $q_{x_0}(\bar{Z}+t\bar{X}) = e^{\lambda t} q_{x_0}(\bar{Z})$. Since $q_{x_0}$ is the restriction of a Lorentzian metric, it does not vanish identically, implying that $q_{x_0}(Z) \neq 0$, and then $\lambda=0$ since $q_{x_0}(\bar{Z}+t\bar{X})$ is polynomial in $t$. So, $\bar{X}$ is isotropic and orthogonal to $\bar{Z}$ with respect to $q_{x_0}$. This proves that $H.x_0$ is degenerate and that $X_{x_0}$ gives the direction of the kernel at $x_0$, implying that $g_{x_0}(Z,Z) > 0$.

\subsubsection{Stabilizer of $x_0$}

Let $\mathcal{Z}$ be the center of $H$ and let $H_{x_0}$ denote the stabilizer of $x_0$. Note $U < H$ and $A^+<H$ the one-parameter subgroups generated by $Y$ and $X$ respectively, so that $(H_{x_0})_0 = U$. In fact, modulo $\mathcal{Z}$, there are only two subgroups of $H$ admitting $U$ as neutral component. To see this, consider the morphism $\Ad : H \rightarrow \Ad(H) \simeq H / \mathcal{Z} \simeq \SO_0(1,2)$, the last identification coming from the Killing form of $\h$. It is injective in restriction to $A^+U$. The image $\Ad(H_{x_0})$ preserves the line $\R.Y \subset \h$, which is isotropic with respect to the Killing form of $\h$. Thus, $H_{x_0}$ is sent into the stabilizer of $\R.Y$, which is
\begin{equation*}
\Ad(A^+U) \simeq 
\left \{
\begin{pmatrix}
a & au & -au^2/2 \\
0 & 1 & -u \\
0 & 0 & a^{-1}
\end{pmatrix}
, \ a > 0, \ u \in \R
\right \}
\subset \SO_0(1,2)
\end{equation*}
Because $\dim H_{x_0} =1$, $\Ad(H_{x_0})/\Ad(U)$ is either trivial or isomorphic to $\Z$, since it is closed in $\Ad(A^+U)/\Ad(U)$.

\vspace*{.2cm}

Finally, $H_{x_0}/\mathcal{Z}$ is either isomorphic to $U$ or to a semi-direct product $\Z \ltimes U$, where $\Z$ is a discrete subgroup of $A^+$. The issue is to exclude the first case. Otherwise stated, we want to prove the existence of $t_0 > 0$ such that $\phi_X^{t_0}(x_0) = x_0$, \textit{i.e.} that the orbit of $x_0$ under the flow $\phi_X^t$ is periodic. To do so, we are going to prove that this flow is non-uniformly hyperbolic over a compact subset containing $x_0$, with non-zero Lyapunov exponents having all the same sign - except of course the direction of the flow. General arguments based on Pesin Theory will then give the existence of a closed orbit of $\phi_X^t$.

\subsubsection{A lemma on non-uniformly hyperbolic conformal flows}

Let $x_0$ denote the point we have exhibited previously. We define the compact $\phi_X^t$-invariant subset 
\begin{equation*}
K_0 := \overline{\{\phi_X^t(x_0), \ t \in \R \}}.
\end{equation*}
Since we have the general relation $(\phi_X^t)_* Y_x = e^{-t} Y_{\phi_X^t(x)}$ and because $Y_{x_0} = 0$, the vector field $Y$ vanishes on $K_0$. Since $K_0 \subset K$, it implies that the vector fields $X$ and $Z$ are linearly independent in a neighborhood of $K_0$. Moreover, the analogous relation $(\phi_X^t)_* Z_x = e^t Z_{\phi_X^t(x)}$ and the fact that $g_{x_0}(X,X) = g_{x_0}(X,Z) = 0$ implies that $X$ is isotropic and orthogonal to $Z$ everywhere in $K_0$ (since $\phi_X^t$ is conformal). Because $X$ and $Z$ are non-proportional, we get that $g_x(Z,Z) > 0$ for all $x \in K_0$ and by continuity, we have $g(Z,Z) > 0$ in a neighborhood of $K_0$. Let us note
\begin{equation*}
\Omega := \{x \in M \ | \ g_x(Z,Z) > 0 \text{ and } X_x \neq 0\}.
\end{equation*}
In the open subset $\Omega$, we note $g_0 := g/g(Z,Z)$. Consider now the Lorentzian manifold $(\Omega,g_0)$: it is preserved by $\phi_X^t$ - even though it is \textit{not} $H$-invariant - and $K_0 \subset \Omega$ is a compact $\phi_X^t$-invariant subset. Moreover, $X$ is an essential \textit{homothetic} conformal vector field of $(\Omega,g_0)$. Indeed, if $\lambda(x,t)> 0$ is such that $[(\phi_X^t)^*g_0]_x = \lambda(x,t) [g_0]_x$, applying this relation to $Z_x$, we get $e^{2t} = \lambda(x,t)$ for all $x \in \Omega$ and $t \in \R$: the conformal distortion of $\phi_X^t$ is non-trivial and uniform on the manifold.

\begin{lemma}
Let $(M,g)$ be a Lorentzian manifold and $X$ be a complete, non-singular vector field of $(M,g)$ such that $(\phi_X^t)^* g= \e^t g$ for all $t$. Then, any (if any) compact $\phi_X^t$-invariant subset of $M$ is a finite union of light-like periodic orbits of the flow.
\end{lemma}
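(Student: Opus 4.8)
The plan is to separate the two assertions contained in the statement — that the orbits are light-like, and that they are periodic and finitely many — and to treat them in that order. The only genuinely metric input concerns the first one. Set $f := g(X,X)$. Since $X$ is invariant under its own flow, $(\phi_X^t)_* X = X$, and applying the homothety relation $(\phi_X^t)^* g = \e^t g$ to the pair $(X,X)$ gives $f \circ \phi_X^t = \e^t f$ on all of $M$. If $K$ is a compact $\phi_X^t$-invariant set, then $f$ is bounded on $K$ by continuity; but if $f(x) \neq 0$ for some $x \in K$ then $|f(\phi_X^t x)| = \e^t |f(x)| \to \infty$ while $\phi_X^t x$ stays in $K$, a contradiction. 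Hence $f \equiv 0$ on $K$, i.e. $X$ is everywhere light-like on $K$; as $X$ is non-singular, every orbit in $K$ is a non-constant light-like curve. This disposes of the qualitative part of the conclusion.

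The dynamical heart is periodicity, and here I would pass to the linearized flow. Because $\phi_X^t$ scales $g$ by $\e^t$ it preserves orthogonality, so $D\phi_X^t$ preserves the filtration $\R X \subset X^\perp \subset TM$ along $K$: it is the identity on $\R X$ (as $D\phi_X^t X = X$), it scales the induced positive-definite metric on $X^\perp/\R X$ by $\e^t$, and it scales $TM/X^\perp$ (detected by $g(X,\cdot)$) by $\e^t$. Thus, transverse to the flow direction, the Lyapunov exponents of $D\phi_X^t$ with respect to any $\phi_X^t$-invariant probability measure on $K$ — which exists because $\R$ is amenable and $K$ is compact — are all strictly positive (equal to $\tfrac12$ with multiplicity $n-2$ and to $1$ once), the only vanishing exponent being the flow direction. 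In particular such a measure is hyperbolic, and every periodic orbit contained in $K$ is a transverse source: its transverse return map is linearly expanding, so it admits a tubular neighborhood $\mathcal{T}$ with $\phi_X^{-s}(\mathcal{T}) \subset \mathcal{T}$ for $s \geq 0$, on which $\phi_X^{-s}$ contracts uniformly onto the orbit.

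Next I would produce periodic orbits and then globalize. A minimal subset $\Lambda \subset K$ supports an ergodic measure of the above type; by Pesin theory — concretely Katok's closing lemma for hyperbolic measures — periodic orbits lie in the support, so $\Lambda$ contains one, and minimality forces $\Lambda$ to be exactly that single light-like periodic orbit, a transverse source with tube $\mathcal{T}$ as above. For an arbitrary $x \in K$, the $\alpha$-limit set contains such a source $O$; once the backward orbit enters $\mathcal{T}$ it is trapped and converges, so $\alpha(x) = O$ is a single periodic orbit. Writing $A_i := \{x \in K : \alpha(x) = O_i\}$ for the distinct periodic orbits $O_i$ so obtained, the $A_i$ are disjoint, flow-invariant, and relatively open in $K$ (each contains a tube pulled back by the flow), hence clopen and finite in number by compactness, and they cover $K$. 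On each $A_i$ the whole orbit of a point $x$ stays in $A_i$, so $\omega(x) \subset A_i$ and, containing a minimal set, meets $O_i$; since no transverse contraction is available forward, an orbit whose forward accumulation returns arbitrarily close to a source must be trapped in the source tube, hence lies on $O_i$. Therefore $A_i = O_i$ and $K = \bigsqcup_i O_i$ is a finite union of light-like periodic orbits.

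The step I expect to be the main obstacle is the production of a periodic orbit from the invariant measure: this is exactly where the non-uniformly hyperbolic machinery (Oseledets splitting, Pesin stable and unstable manifolds, Katok's closing lemma) and the $\mathcal{C}^{1+\alpha}$ regularity of a smooth flow are essential, and where the one-sided sign of the transverse spectrum is used. The subsequent upgrade from ``a periodic orbit exists'' to ``every point is periodic'' is more elementary but still delicate: it rests on the fact that a transverse source carries no homoclinic orbit, since its stable set is the orbit itself, and it is this that allows the trapping argument to promote forward accumulation near $O_i$ to genuine membership in $O_i$.
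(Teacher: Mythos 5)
Most of your argument is sound, and two steps are genuinely cleaner than the paper's: the direct observation that $g(X,X)\circ\phi_X^t=\e^t\,g(X,X)$ forces $X$ to be light-like on any compact invariant set, and the exact computation of the transverse Lyapunov spectrum (exponents $\tfrac12$ with multiplicity $n-2$, and $1$ once) from the invariant filtration $\R X\subset X^{\perp}\subset TM$; the paper only derives positivity of the transverse exponents, via the fact that totally isotropic subspaces of a Lorentzian metric are lines. Your globalization (backward-trapping tubes around transversally repelling periodic orbits, the clopen partition of $K$ by $\alpha$-limit orbits, and the trapping argument giving $A_i=O_i$) is also correct. The gap is the production step: ``by Katok's closing lemma \ldots\ periodic orbits lie in the support, so $\Lambda$ contains one.''

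Katok's closing lemma does not say this. It says hyperbolic periodic orbits exist arbitrarily \emph{close to} the support of a hyperbolic ergodic measure, i.e.\ $\Supp(\mu)\subset\overline{\mathrm{Per}}$; the orbits it produces need not meet $\Supp(\mu)$, nor even the invariant set $K$ --- which is fatal here, since the statement you are proving concerns the structure of $K$ itself. Worse, the implication you want is false at this level of generality: inside a horseshoe, any infinite minimal subsystem is a compact minimal invariant set, all of whose invariant measures are hyperbolic, yet it contains no periodic orbit at all. So hyperbolicity of the measure alone cannot force $\Lambda$ to be a periodic orbit; the one-sided sign of the transverse spectrum must be used \emph{inside} the closing argument, not after it, and this is exactly what the paper does. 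For $\varphi^t=\phi_X^{-t}$ the Pesin stable manifold of a regular point has codimension one (only the flow direction is neutral), so for a recurrent regular point $x\in\Lambda$ the return map $\pi_x\circ\varphi^T$ is a contraction of $\overline{W^s_{\mathrm{loc}}(x)}$ into itself; its fixed point $x'$ is periodic, and the convergence $d(\varphi^t(x),\varphi^t(x'))\to 0$ combined with recurrence of $x$ forces $x\in\mathcal{O}_{x'}$. Thus the recurrent point itself lies on the periodic orbit, which is therefore contained in $\Lambda$ --- the step your black-box citation skips. Some argument of this kind (or an appeal to the uniform-size invariant manifolds in the full statement of Katok's theorem, followed by your own trapping argument to drag the shadowing orbit into $\Lambda$) is required to close the proof.
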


\begin{proof}
Let $K \subset M$ be a compact $\phi_X^t$-invariant subset, and let $\mu$ be an ergodic $\phi_X^t$-invariant measure such that $\Supp(\mu) \subset K$. We have an Osedelec decomposition $\mu$-almost everywhere $T_xM = E_1(x) \oplus \cdots \oplus E_r(x)$, with Lyapunov exponents $\chi_1 < \cdots < \chi_r$. We claim that $\chi_1=0$, with multiplicity $1$.

By continuity of the Lorentzian metric $g$, for any arbitrary Riemannian norm $\|.\|_x$, there exists $C > 0$ such that for all $x \in K$ and $v \in T_xM$, $|g_x(v,v)| \leq C \|v\|_x^2$ (take for instance $C$ to be the supremum of $|g_x(v,v)|$ over $T^1M|_K$, where $T^1M$ denotes the unit tangent bundle with respect to $\|.\|$). Note $i$ the index such that $\chi_i = 0$ and let $x$ be in the set of full measure where the Osedelec decomposition holds. If $v \in E_1(x) \oplus \cdots E_i(x)$ is non-zero, we have 
\begin{equation*}
\lim_{t \to + \infty} \frac{1}{t} \log \|(\phi_X^t)_* v\|_{\phi_X^t(x)} \leq 0.
\end{equation*}
But on the other hand, $g_{\phi_X^t(x)}((\phi_X^t)_* v,(\phi_X^t)_* v) = \e^t g_x(v,v)$. Since we can compare $g$ and $\|.\|$ over $K$, we obtain $t + \log |g_x(v,v)| \leq \log C + 2\log \|(\phi_X^t)_* v\|_{\phi_X^t(x)}$. Therefore, we must have $g_x(v,v) = 0$, for any $v \in E_1(x) \oplus \cdots E_i(x)$. Since $g$ has Lorentzian signature, its totally isotropic subspaces are at most $1$-dimensional. Thus, we get that $E_1(x) \oplus \cdots \oplus E_i(x)$ is $\mu$-almost everywhere reduced to the direction of the flow, and that this direction is isotropic.

\vspace*{.2cm}

In what follows, we forget about the conformal Lorentzian aspects of our problem and only consider the differentiable dynamics of $\varphi^t := \phi_X^{-t}$ when $t \to +\infty$. We note $d$ a distance induced by a Riemannian norm on $M$. This flow is non-uniformly hyperbolic since the Lyapunov exponent $0$ has multiplicity $1$, all other exponents being negative. So, we are in the setting of Pesin Theory. For any $\lambda \in ]0,\chi_2[$, it gives us a set of full measure $\Lambda$ and for all $x \in \Lambda$, a local \textit{stable manifold} $W_{\text{loc}}^s(x)$ of codimension $1$ since there are no expanding directions, \cite{barreira_pesin} Theorem 7.7.1. The fundamental property of local stable manifolds that we will use is that there exists $\gamma(x) > 0$ such that for all $y,z \in W_{\text{loc}}^s(x)$ and $t \geq 0$,
\begin{equation}
\label{equ:exponential_rate_varphi}
d(\varphi^t (y), \varphi^t (z)) \leq \gamma(x) d(y,z) e^{-\lambda t}.
\end{equation}
Shrinking $W_{\text{loc}}^s(x)$ if necessary, we can assume that (\ref{equ:exponential_rate_varphi}) holds for $y$ and $z$ in the closure of $W_{\text{loc}}^s(x)$ and that $W_{\text{loc}}^s(x)$ is transverse to the flow, so that we have $\epsilon(x)>0$ such that $(t,y) \in ]-\epsilon(x),\epsilon(x)[ \times W_{\text{loc}}^s(x) \mapsto \varphi^t(y)$ is a diffeomorphism onto its image $B_x^{\epsilon(x)}$, called a \textit{flow box} at $x$.

By the Poincaré recurrence theorem, $\Lambda \cap K$ contains recurrent points for $\phi_X^t$. Let $x$ be one of them. Let $\delta >0$ such that $B(x,\delta) \subset B_x^{\epsilon(x)}$ - where $B(x,\delta)$ is the ball of radius $\delta$ with respect to $d$. Since $x$ is recurrent, we have $T>0$, as big as we want, such that $\varphi^T(x) \in B(x,\delta/2)$. By (\ref{equ:exponential_rate_varphi}), we can also assume that $T$ is such that for all $y \in \bar{W_{\text{loc}}^s(x)}$, we have $d(\varphi^T(x),\varphi^T(y)) < \delta/2$. Thus, $\varphi^T$ maps $\bar{W_{\text{loc}}^s(x)}$ into the flow box. Let $\pi_x : B_x^{\epsilon(x)} \rightarrow W_{\text{loc}}^s(x)$ be the natural submersion obtained by flowing with times not greater than $\epsilon(x)$. Finally, we have a continuous map 
\begin{equation*}
f := \pi_x \circ \varphi^T : \bar{W_{\text{loc}}^s(x)} \rightarrow \bar{W_{\text{loc}}^s(x)}.
\end{equation*}
Since $\pi_x$ is obtained by flowing in a small region, it is a Lipschitz map. So, replacing $T$ by a greater value if necessary and using (\ref{equ:exponential_rate_varphi}), we get that $f$ is a contraction map. The Picard fixed-point Theorem applies and gives a fixed point $x' \in \bar{W_{\text{loc}}^s(x)}$. This means that $\varphi^{T+t}(x') = x'$ for some $t \in ]-\epsilon(x),\epsilon(x)[$: we have found a periodic orbit of the flow. We claim that moreover, $x \in \mathcal{O}_{x'} := \{\varphi^t(x'), \ t\in \R\}$. Indeed, we have $d(\varphi^t(x),\varphi^t(x')) \rightarrow 0$ and $x$ is a recurrent point. It implies that $d(x,\mathcal{O}_{x'})=0$, and then $x \in \mathcal{O}_{x'}$ since $\mathcal{O}_{x'}$ is compact.

This proves in particular that any minimal $\varphi^t$-invariant subset of $K$ is a periodic orbit. It is not difficult to see that in fact, any point of $K$ has a periodic orbit. Indeed, if $x \in K$ consider the $\alpha$-limit set $\alpha(x) = \cap_{t \in \R} \overline{\{ \varphi^s(x), \ s \leq t\}}$. What we have seen above ensures that some point $x^- \in \alpha(x)$ has a periodic orbit and a stable codimension $1$ manifold $W_{\text{loc}}^s(x^-)$, satisfying (\ref{equ:exponential_rate_varphi}). Thus, $x^-$ admits a neighborhood $V$ such that there exists $C \geq 0$ such that for any $y \in V$, there is $t(y) \in \R$ such that for all $t \geq 0$
\begin{equation*}
d(\varphi^t(y),\varphi^{t+t(y)}(x^-)) \leq C e^{-\lambda t}.
\end{equation*}
Let $\mathcal{O}_{x^-}$ denote the orbit of $x^-$. Let $t_n \to +\infty$ be a sequence such that $y_n= \varphi^{-t_n}(x) \rightarrow x^-$. If $n$ is large enough, $y_n \in V$. So, $d(x,\mathcal{O}_{x^-}) \leq d(\varphi^{t_n}(y_n),\varphi^{t_n+t(y_n)}(x^-)) \leq C e^{-\lambda t_n} $. This proves $d(x,\mathcal{O}_{x^-}) = 0$, \textit{i.e.} $x$ belongs to the orbit of $x^-$.

Finally, the same argument gives that if $x \in K$, then $x$ admits a neighborhood $V$ such that $V \cap K = V \cap \{\varphi^t(x), \ t \in \R\}$. By compactness, $K$ contains a finite number of periodic orbits.
\end{proof}

\subsubsection{Conclusion}

If we apply this result to $(\Omega,g_0)$ with the homothetic action of $\phi_X^t$, we obtain that $K_0$ is in fact reduced to a periodic orbit of $x_0$. Thus, we have $t_0 > 0$ such that $\phi_X^{t_0}(x_0) = x_0$, \textit{i.e.} $H_{x_0} \cap A^+ \neq \{\id\}$. So, $H_{x_0}/ \mathcal{Z} \simeq \Z \ltimes U$. In particular, if the center $\mathcal{Z}$ is finite, the orbit is compact conical and we are done as soon as $H \neq \tilde{\SL}(2,\R)$.

\paragraph*{The case of $\tilde{\SL}(2,\R)$.}

Assume now that $H$ is isomorphic to $\tilde{\SL}(2,\R)$. We still have $H_{x_0} / \mathcal{Z} = \, < \! \! f \! \! > \! \ltimes \: U$ where $f \in \PSL(2,\R)$ is hyperbolic and normalizes the unipotent one-parameter subgroup $U$. Let $\zeta \in \mathcal{Z}$ be a generator. Let $n_k \rightarrow \infty$ be an increasing sequence such that $\zeta^{n_k}(x_0) \rightarrow x$. Since $\zeta$ centralizes $X$, $Y$ and $Z$, and is conformal, we recover at $x$ the same properties as at $x_0$: $\phi_X^{t_0}(x) = x$, $Y_x=0$ and $X_x$ is isotropic and orthogonal to $Z_x$. The same arguments based on local stable manifolds at (or near) $x$ ensures that there is a neighborhood $V$ of $x$ such that, if $\mathcal{O}_x$ denotes the (closed) $\phi_X^t$-orbit of $x$, then for any $y \in V$, $d(\phi_X^t(y),\mathcal{O}_x) \rightarrow 0$ when $t \to -\infty$. But since $\zeta$ centralizes $X$, for any $k$, $\zeta^{n_k}(x_0)$ is a periodic point of $\phi_X^t$. So, if $k$ is such that $\zeta^{n_k}(x_0) \in V$, then the distance between the orbit of $\zeta^{n_k}(x_0)$ and the orbit of $x$ is zero, \textit{i.e.} $\zeta^{n_k}(x_0)$ belongs to the $\phi_X^t$-orbit of $x$ for $k$ large enough. So, for large $k$, we have $t_k$ such that $\zeta^{n_k}(x_0) = \phi_X^{t_k}(x)$. If $p = n_{k+1} - n_k$ and $t = t_k - t_{k+1}$, we obtain $\zeta^p \circ \phi_X^t (x_0) = x_0$, \textit{i.e.} $\zeta^p . e^{tX} \in H_{x_0}$. If we had $H_{x_0} \cap \mathcal{Z} = \{id\}$, then we would have $\zeta^p \in A^+ U$ where $A^+$ and $U$ are the one-parameter subgroups generated by $X$ and $Y$. This is not possible since no element in $A^+ U$ centralizes all $\tilde{\SL}(2,\R)$. So, some power $\zeta^m$ fixes $x_0$, proving that the orbit of $x_0$ is also a compact conical orbit.

\section{Conformal flatness near orbits with small dimension}
\label{s:small_dimension}
A conformal actions of $H \simeq_{\text{loc}} \SL(2,\R)$ on a compact Lorentz manifold $(M,g)$ is essential if and only if there exists an orbit of dimension at most $2$. Let us note
\begin{equation*}
F_{\leq 2} = \{x \in M \ | \ \dim H.x \leq 2\}.
\end{equation*}
It is a non-empty $H$-invariant compact subset of $M$. Considering a minimal $H$-invariant subset of $F_{\leq 2}$, what we have done so far proves that $F_{\leq 2}$ contains either a fixed point, or a $1$-dimensional orbit, or a compact conical orbit. We are now going to prove that such orbits always admit a conformally flat neighborhood.

\vspace*{.2cm}

Except in the first case, the key point is that each time, the isotropy of the orbit contains either an hyperbolic flow, or just an hyperbolic element, whose dynamics imposes that a neighborhood of the orbit is conformally flat. Once the action is described, the vanishing of the Weyl-Cotton curvature easily follows from previous methods (\cite{frances_melnick13}, \cite{article3}). We will determine the dynamics of this hyperbolic flow or element by using the \textit{Cartan geometry} associated to the conformal structure of the manifold.

\vspace*{.2cm}

Let us mention that in the case of a $1$-dimensional orbit and of a compact conical orbit, the techniques involved are local: we make no use of the global action of $H$. In particular, the conclusions are valid in non-compact Lorentzian manifolds.

\subsection{Preliminaries on Cartan geometries}
\label{ss:preliminaries_cartan_geometries}
Let $G$ be a Lie group, $P<G$ a closed subgroup and $n = \dim G/P$.

\begin{definition}
Let $M$ be a differentiable manifold of dimension $n$. A \textit{Cartan geometry} on $M$, with model space $G/P$, is the data of a $P$-principal fiber bundle $\pi : \hat{M} \rightarrow M$, together with a $1$-form $\omega \in \Omega^1(\hat{M},\g)$, such that:
\begin{enumerate}
\item $\forall \hx \in \hat{M}$, $\omega_{\hx} : T\hat{M} \rightarrow \g$ is a linear isomorphism ;
\item $\forall p \in P$, $(R_p)^* \omega = \Ad(p^{-1}) \omega$ ;
\item $\forall A \in \p$, $\omega(A^*) \equiv A$, where $A^*$ denotes the fundamental vector field on $\hat{M}$ associated to the right action of $e^{tA}$.
\end{enumerate}
\end{definition}

The bundle $\pi : \hat{M} \rightarrow M$ is called the \textit{Cartan bundle} and $\omega$ is called the \textit{Cartan connection}. A morphism between two Cartan geometries $(M_1,\hat{M}_1,\omega_1)$ and $(M_2,\hat{M}_2,\omega_2)$ is a local diffeomorphism $f : M_1 \rightarrow M_2$ such that there exists a bundle morphism $\hat{f} : \hat{M}_1 \rightarrow \hat{M}_2$ covering $f$, and such that $\hat{f}^* \omega_2 = \omega_1$. If the model space $G/P$ is effective, a morphism $f$ uniquely determines the bundle morphism $\hat{f}$ covering it (\cite{cap_slovak}, Prop.1.5.3). In such cases, we say that $\hat{f}$ is \textit{the lift} of $f$.

At the infinitesimal level, a vector field $X \in \mathfrak{X}(M)$ is said to be a \textit{Killing vector field} of the Cartan geometry if its local flow is formed with local automorphisms. This is equivalent to the existence of $\hat{X} \in \mathfrak{X}(\hat{M})$ such that $\pi_* \hat{X} = X$, $\forall p \in P$, $(R_p)^* \hat{X} = \hat{X}$ and $\mathcal{L}_{\hat{X}} \omega = 0$. When $G/P$ is effective, we have a well-defined correspondence $X \mapsto \hat{X}$, and $\hat{X}$ is called the lift of $X$.

\paragraph*{Holonomy of a transformation admitting a fixed point.}

Let $f$ be an automorphism of a Cartan geometry and $\hx \in \hat{M}$. If $M$ is connected, then $\hat{f}$, and \textit{a fortiori} $f$, is completely determined by the evaluation $\hat{f}(\hx)$ at $\hx$. If we assume that $f(x) = x$, then $\hat{f}$ preserves the fiber $\pi^{-1}(x) = \hx.P$. In particular, there exists a unique $p \in P$ such that $\hat{f}(\hx) = \hx.p$. Following \cite{frances_localdynamics}, we say that $p$ is the \textit{holonomy of $f$ at $\hx$}. This element $p$ determines $f$ and the principle is that the description of the action of $f$ near $x$ can be reduced to an algebraic analysis of its holonomy.
 
If a Killing vector field $X$ is such that $X(x) = 0$, then $\hat{X}(\hx)$ is tangent to the fiber $\pi^{-1}(x)$, and $X_h := \omega_{\hx}(\hat{X}_{\hx}) \in \p$ is called the holonomy of $X$ at $\hx$. Equivalently, it can be defined by the fact that $e^{tX_h}$ is the holonomy at $\hx$ of $\phi_X^t$, for small $t$.

\subsubsection{The equivalence principle for conformal structures}
\label{sss:equivalence_principle}

\paragraph*{Einstein Universe.} Let $(p,q)$ be two non-negative integers such that $n:=p+q \geq 3$. The Einstein Universe of signature $(p,q)$, noted $\Ein^{p,q}$, is defined as the projectivized nullcone $\mathcal{N}^{p+1,q+1} \setminus \{0\} = \{(x_1,\ldots,x_{n+2}) \in \R^{n+2} \setminus \{0\} \ | \ -x_1^2 - \cdots - x_{p+1}^2 + x_{p+2}^2 + \cdots + x_{n+2}^2 = 0\}$. It is a smooth quadric hypersurface of $\R P^{n+1}$, that naturally inherits a conformal class $[g_{p,q}]$ of signature $(p,q)$ from the ambiant quadratic form of $\R^{p+1,q+1}$. It admits a double cover $\S^p \times \S^q \rightarrow \Ein^{p,q}$. By construction, there is a natural transitive conformal action of $\PO(p+1,q+1)$ on $\Ein^{p,q}$, and in fact $\Conf(\Ein^{p,q},[g_{p,q}]) = \PO(p+1,q+1)$. Thus, $\Ein^{p,q}$ is a compact, conformally homogeneous space. It is \textit{the model space} of conformal geometry in the following sense.

\begin{theorem}[Equivalence principle]
Let $(p,q)$ be a couple of non-negative integers such that $p+q \geq 3$. There is an equivalence of category between the category of conformal structures of signature $(p,q)$ and the category of normalized Cartan geometries modeled on $\Ein^{p,q}$.
\end{theorem}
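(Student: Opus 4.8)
This is the classical fundamental theorem for conformal Cartan geometry, and the plan is to follow Cartan's equivalence method as systematized for parabolic geometries (see \cite{cap_slovak}): exhibit two functors and show they are mutually inverse. Write $G = \PO(p+1,q+1)$ and let $P < G$ be the stabilizer of an isotropic line in $\R^{p+1,q+1}$, so that $G/P = \Ein^{p,q}$. The choice of this line induces a $|1|$-grading $\g = \so(p+1,q+1) = \g_{-1} \oplus \g_0 \oplus \g_1$, with $\g_0 \simeq \co(p,q) = \R \oplus \so(p,q)$, $\g_{-1} \simeq \R^{p,q}$, $\g_1 \simeq (\R^{p,q})^*$, and $\p = \g_0 \oplus \g_1$. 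The identification $\g/\p \simeq \g_{-1}$ realizes $T_o(G/P)$ as $\R^{p,q}$, and since $\exp(\g_1) \subset P$ acts trivially on $\g/\p$, the isotropy action of $P$ on $\g/\p$ factors through $G_0 := \CO(p,q)$, preserving precisely the standard conformal class.

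The easy functor sends a normalized Cartan geometry $(\hat{M},\omega)$ to a conformal structure: the Cartan connection trivializes $T\hat{M} \simeq \g$ and hence identifies $TM \simeq \hat{M} \times_P (\g/\p)$; because $P$ acts conformally on $\g/\p$, the constant quadratic form on $\g_{-1}$ descends to a well-defined conformal class $[g]$ of signature $(p,q)$ on $M$. Morphisms of Cartan geometries are conformal by construction, so this assignment is functorial. The substance of the theorem lies in the reverse functor. Given $(M,[g])$, I would first reduce the coframe bundle to the $G_0$-structure $\hat{M}_0 \to M$ of orthonormal coframes up to scale — this captures the first-order data — then form the $P$-bundle $\hat{M} = \hat{M}_0 \times_{G_0} P$ and build a Cartan connection $\omega$ inducing the given $G_0$-structure by prolongation: the $\g_{-1}$-component is the soldering form, the $\g_0$-component is a Weyl connection compatible with $[g]$, and the $\g_1$-component is fixed by the normalization below.

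Among all such $\omega$ there is a unique one, up to bundle automorphism, whose curvature $\kappa = \d\omega + \tfrac{1}{2}[\omega,\omega]$ satisfies the Kostant normalization condition $\partial^* \kappa = 0$, where $\partial^*$ is the codifferential on the complex $\wedge^\bullet \g_{-1}^* \otimes \g$ computing the Lie algebra cohomology $H^\bullet(\g_{-1},\g)$. This is the heart of the matter: modifying the $\p$-valued part of $\omega$ by a tensor $\Phi \in \g_{-1}^* \otimes \p$ alters $\partial^*\kappa$ by $\partial^*\partial\Phi$, and the Hodge decomposition for the associated Laplacian $\partial\partial^* + \partial^*\partial$ — available because $\g = \so(p+1,q+1)$ is semisimple — shows that at each homogeneity degree $\partial^*\partial$ is invertible off the harmonic subspace, so a unique $\Phi$ enforces $\partial^*\kappa = 0$. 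This yields existence and uniqueness of the normal Cartan connection. Functoriality, and the fact that the two constructions are mutually inverse, then follow because every conformal diffeomorphism lifts uniquely (\cite{cap_slovak}, Prop. 1.5.3) and preserves the canonically constructed normal connection.

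The main obstacle is precisely this normalization step: establishing existence and uniqueness of the normal connection through the algebra of the codifferential $\partial^*$ and the Hodge decomposition of $H^\bullet(\g_{-1},\g)$. This is where the semisimplicity of $\so(p+1,q+1)$ and the hypothesis $p+q \geq 3$ are genuinely used, the latter guaranteeing that the grading and the relevant cohomology behave as stated and ruling out the degenerate low-dimensional conformal case.
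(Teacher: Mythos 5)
The paper does not prove this theorem: it is stated as a classical result with references to \cite{sharpe} and \cite{cap_slovak}, and your sketch correctly reproduces the standard proof from the latter (reduction to the $\CO(p,q)$-structure, prolongation to the $P$-bundle, and normalization via the Kostant codifferential $\partial^*$ together with the algebraic Hodge decomposition for the Kostant Laplacian), including the correct role of the hypothesis $p+q \geq 3$. The only gloss is that the change of $\partial^*\kappa$ under $\omega \mapsto \omega + \Phi$ equals $\partial^*\partial\Phi$ only in the lowest non-normalized homogeneity (with nonlinear corrections in higher degrees), so the normalization must be carried out inductively by homogeneity; this is a presentational point, not a gap.
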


This result was originally proved by E. Cartan in the Riemannian case. See \cite{sharpe}, Ch. V., and \cite{cap_slovak}, Section 1.6, for references. The normalization condition is an additional technical condition imposed on the Cartan connection of the corresponding Cartan geometry. We do not give detail since it will not be useful for us.

Since $\Ein^{p,q}$, as a $\PO(p+1,q+1)$-homogeneous space, is effective, we can legitimately consider the lifts to the Cartan bundle of conformal maps and conformal vector fields defined on the base manifold. 

\subsubsection{Explicit root-space decomposition of $\so(2,n)$}
\label{sss:cartan_decomposition}

The theory of Cartan geometries allows us to reduce technical problem of conformal geometry to algebraic questions in the model space. From now on, we only consider Lorentzian conformal structures and the letter $G$ exclusively refers to the Lie group $\PO(2,n)$, and $P$ will denote the stabilizer in $G$ of an isotropic line in $\R^{2,n}$, so that $\Ein^{1,n-1} \simeq G/P$ as $G$-homogeneous spaces.

\vspace*{.2cm}

We adopt here some of the notations of \cite{cap_slovak}, Section 1.6.3. In a basis of $\R^{2,n}$ in which the quadratic form reads $2x_1x_{n+2} + 2x_2x_{n+1}+ x_3^2 + \cdots + x_n^2$, and $P$ is the stabilizer of $[1:0:\cdots:0]$, the Lie algebra $\g= \so(2,n)$ has the form
\begin{equation*}
\g= 
\left \{
\begin{pmatrix}
a & Z & 0 \\
X & A & -Z^* \\
0 & -X^* & -a
\end{pmatrix}
, \ a \in \R, \ X \in \R^n, \ Z \in (\R^n)^*, \ A \in \so(1,n-1)
\right \}
\end{equation*}
where $Z^*$ denotes $J ~ ^{t} Z$, $X^* = ~ ^{t} \! X J$ and $JA+ ~^{t}A J = 0$, with
$
J := 
\begin{pmatrix}
0 & 0 & 1 \\
0 & I_{n-2} & 0 \\
1 & 0 & 0
\end{pmatrix}.
$

Abusively, we will write $Z$ (or $X$) to denote the corresponding elements of $\g$. This decomposition yields the grading $\g= \g_{-1} \oplus \g_0 \oplus \g_1$ (see \cite{cap_slovak}, p.118) and we have $\p = \g_0 \oplus \g_1$. Deeper in the description, we can decompose the $\so(1,n-1)$ factor similarly:
\begin{equation*}
\so(1,n-1)= 
\left \{
\begin{pmatrix}
b & T & 0 \\
U & B & - ^{t}  T \\
0 & - ^{t} U & -b
\end{pmatrix}
, \ b \in \R, \ U \in \R^{n-2}, \ T \in (\R^{n-2})^*, \ B \in \so(n-2)
\right \}.
\end{equation*}
Then, we identify a Cartan subspace in $\so(2,n)$, with respect to the Cartan involution $\theta(M) = - ^{t} M$:
\begin{equation*}
\a =
\left \{
\begin{pmatrix}
a & & & & \\
  & b & & & \\
  & & 0 & & \\
  & & & -b & \\
  & & & & -a
\end{pmatrix}
,\ a,b \in \R 
\right \}.
\end{equation*}
The corresponding restricted root-space decomposition is summarized below
\begin{equation*}
\begin{pmatrix}
\a & \g_{\alpha} & \g_{\alpha + \beta} & \g_{\alpha+2\beta} & 0 \\
   & \a & \g_{\beta} & 0 & \g_{\alpha+2\beta} \\
   & & \m & \g_{\beta} & \g_{\alpha + \beta}  \\
   & & & \a & \g_{\alpha} \\
   & & & & \a
\end{pmatrix}
\end{equation*}
(the negative root spaces being obtained by transposition). The factor $\m = \mathfrak{z}_{\mathfrak{k}}(\a)$ is isomorphic to $\so(n-2)$ and corresponds to the block matrices $B$ in the decomposition of $\so(1,n-1)$. The simple roots $\alpha$ and $\beta$ are given by $\alpha(a,b) = a-b$ and $\beta(a,b) = b$, where $(a,b)$ abusively refers to the corresponding matrix of $\a$. The root spaces $\g_{\pm \beta}$ and $\g_{\pm (\alpha + \beta)}$ have dimension $n-2$, while $\g_{\pm \alpha}$ and $\g_{\pm (\alpha + 2\beta)}$ are lines. We have $\g_1 = \g_{\alpha} \oplus \g_{\alpha + \beta} \oplus \g_{\alpha + 2\beta}$.

\subsection{$1$-dimensional orbits}
\label{ss:1_dimensional_orbits}
Let $H$ be a Lie group locally isomorphic to $\SL(2,\R)$ and $(M,g)$ a Lorentzian manifold on which $H$ acts conformally. We assume in this section that there exists a $1$-dimensional orbit $H.x_0$ in $M$. The stabilizer $\h_{x_0}$ is a $2$-dimensional subalgebra of $\sl(2,\R)$. So, it must be isomorphic to the affine algebra and there exists an $\sl(2)$-triple $(X,Y,Z)$ such that $\h_{x_0} = \Span(X,Y)$.

\subsubsection{Holonomy of the stabilizer}

Let $\pi : \hat{M} \rightarrow M$ and $\omega \in \Omega^1(\hat{M},\g)$ denote the Cartan bundle and the Cartan connection defined by the conformal class $[g]$. If $A$ is a conformal vector field vanishing at a point $x$ and $\hx \in \pi^{-1}(x)$, its holonomy at $\hx$, noted $A_h \in \p$, determines the behaviour of $A$ near its singularity $x$. However, it is complicated to relate explicitly, in full generality, the dynamics of $A$ near $x$ to the algebraic properties of $A_h$.

We start here by describing the holonomies of $X$ and $Y$. Since we have here an $\sl(2)$-triple of conformal vector fields, this question will essentially be reduced to a classification of morphisms $\sl(2,\R) \rightarrow \so(2,n)$.

\vspace*{.2cm}

Let $\hx_0 \in \pi^{-1}(x_0)$ and let $X_h$ and $Y_h$ denote the holonomies of $X$ and $Y$ at $\hx_0$. Remark that a different choice of $\hx_0$, say $\hx_0.p$ with $p \in P$, changes $X_h$ and $Y_h$ in $\Ad(p^{-1})X_h$ and $\Ad(p^{-1})Y_h$. Let $Z^* \in \so(2,n)$ denote the element $\omega_{\hx_0}(\hat{Z})$. We claim that $(-X_h,-Y_h,-Z^*)$ is an $\sl(2)$-triple of $\so(2,n)$. To see this, we introduce a central object of Cartan geometries: the curvature form $\Omega := \d \omega + \frac{1}{2}[\omega,\omega] \in \Omega^2(\hat{M},\so(2,n))$. As it is done in \cite{bader_frances_melnick}, Lem. 2.1, we can compute that for all $\hx \in \hat{M}$,
\begin{equation*}
\omega_{\hx}([\hat{A},\hat{B}]) + [\omega_{\hx}(\hat{A}),\omega_{\hx}(\hat{B})] = \Omega_{\hx}(\hat{A},\hat{B}),
\end{equation*}
for any pair of conformal vector fields $(A,B)$. An elementary property of the curvature form is its horizontality: it vanishes as soon as one of its argument is tangent to the fiber of $\hat{M}$ (\cite{sharpe}, Ch.5, Cor. 3.10). Since $X$ and $Y$ vanish at $\hx_0$, their lifts are vertical and the previous formula ensures that $-X_h$, $-Y_h$ and $- Z^*$ satisfy the bracket relations of $\sl(2,\R)$.

\vspace{.2cm}

Thus, we have obtained a representation $\rho : \sl(2,\R) \rightarrow \so(2,n)$ such that $\rho(X)$ and $\rho(Y)$ admit a common isotropic eigenvector $v \in \R^{2,n}$, which is not an eigenvector for $\rho(Z)$. In particular, $v$ is a highest weight vector for $\rho$, and it follows that the subspace $V = \Span(\rho(Z)^k v, \ k \geq 0)$ is a faithful irreducible subrepresentation of $\rho$.

\paragraph*{Orthogonal representations of $\sl(2,\R)$.}

The following property reduces the possibilities for $V$.

\begin{lemma}
\label{lem:sl2_irred_orthogonal}
Let $\pi_d : \sl(2,\R) \rightarrow \gl(V_d)$ be the $(d+1)$-dimensional irreducible representation of $\sl(2,\R)$. Let $Q$ be a non-zero quadratic form on $V_d$ such that $\pi_d(\sl(2,\R)) \subset \so(Q)$. Then, $d$ is even and $Q$ is non-degenerate, with signature $(\frac{d}{2},\frac{d}{2}+1)$ or the opposite. Moreover, $Q$ is uniquely determined up to a multiplicative constant.
\end{lemma}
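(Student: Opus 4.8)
The plan is to run everything through the restricted-weight decomposition of $\pi_d$ together with Schur's lemma. Since $\pi_d = \mathrm{Sym}^d$ of the standard representation is absolutely irreducible, $\Hom_{\sl(2,\R)}(V_d,V_d)=\R$, and a nonzero invariant bilinear form identifies $V_d$ with $V_d^*$. The weights of $\pi_d(X)$ are $\frac d2,\frac d2-1,\dots,-\frac d2$ (each of multiplicity $1$), which is stable under $\mu\mapsto-\mu$, so $V_d\simeq V_d^*$ and the space of invariant bilinear forms $\Hom_{\sl(2,\R)}(V_d,V_d^*)$ is at most one-dimensional. This already yields the final assertion that $Q$ is unique up to a multiplicative constant, and moreover its symmetric and antisymmetric parts are both invariant, so $Q$ is either symmetric or antisymmetric.

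Next I would pin down the parity of $d$ from the symmetry type of the invariant form. The cleanest route is to realize $V_d=\mathrm{Sym}^d(\R^2)$, where $\R^2$ is the standard representation equipped with the $\sl(2,\R)$-invariant area form $\omega$ (invariance being exactly $\sl(2,\R)=\sp(2,\R)$). The symmetric power then carries an induced invariant bilinear form, and swapping its two arguments multiplies it by $(-1)^d$, since it is built from $d$ copies of the antisymmetric $\omega$. Hence the unique invariant form is symmetric precisely when $d$ is even and antisymmetric when $d$ is odd. As $Q$ is symmetric and nonzero, this forces $d$ even and identifies $Q$, up to scalar, with this induced form.

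Non-degeneracy is then formal: $\Ker Q$ is an $\sl(2,\R)$-invariant subspace, hence $0$ or $V_d$ by irreducibility, and $Q\neq 0$ rules out the latter. For the signature, the invariance condition $\pi_d(\sl(2,\R))\subset\so(Q)$ makes $\pi_d(X)$ skew-symmetric for $Q$, so for weight vectors $u_\mu,u_\nu$ one gets $(\mu+\nu)Q(u_\mu,u_\nu)=0$; thus distinct weight lines are $Q$-orthogonal unless $\nu=-\mu$. Since $d$ is even, the weights are $0,\pm1,\dots,\pm\tfrac d2$, each simple, with weight $0$ occurring exactly once. For $\mu\neq 0$ the plane $\Span(u_\mu,u_{-\mu})$ is $Q$-orthogonal to everything else and $Q$ restricts to it as a hyperbolic plane (the diagonal entries vanish and the off-diagonal one is nonzero by non-degeneracy), contributing signature $(1,1)$; summing over $\mu=1,\dots,\tfrac d2$ gives $(\tfrac d2,\tfrac d2)$. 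The weight-zero line is $Q$-nondegenerate and contributes $(1,0)$ or $(0,1)$ according to the sign of $Q$ there, whence the total signature is $(\tfrac d2+1,\tfrac d2)$ or $(\tfrac d2,\tfrac d2+1)$, the two possibilities matching the $\pm$ scalar ambiguity in $Q$.

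The main obstacle is the second paragraph: the mere existence of a pairing between $V_\mu$ and $V_{-\mu}$ does not by itself distinguish symmetric from antisymmetric forms, so one genuinely needs the $\mathrm{Sym}^d$ realization (or an equivalent argument using a Weyl-group element of $\mathrm{SL}(2,\R)$ acting on the $Q$-pairing of highest- and lowest-weight vectors) to extract the parity condition $d$ even. Once that is in place, the signature computation is a routine bookkeeping of hyperbolic planes.
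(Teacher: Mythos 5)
Your proof is correct and complete. Note that the paper itself gives no proof of this lemma: it is stated as a known fact of $\sl(2,\R)$ representation theory and immediately applied, so there is no argument to compare yours against. Your route is the standard one and fills that gap cleanly: absolute irreducibility plus self-duality gives the one-dimensional space of invariant bilinear forms (hence uniqueness of $Q$ up to scale, and the dichotomy symmetric/antisymmetric); the $\mathrm{Sym}^d(\R^2)$ realization with the symplectic form $\omega$ pins down the symmetry type as $(-1)^d$, forcing $d$ even; irreducibility of $\Ker Q$ gives non-degeneracy; and the weight-space pairing $(\mu+\nu)Q(u_\mu,u_\nu)=0$ reduces the signature count to $\frac d2$ hyperbolic planes plus a definite line on the zero-weight space, yielding $(\frac d2,\frac d2+1)$ or its opposite. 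The one point worth stating slightly more sharply: since $V_d\simeq V_d^*$ as representations, the space of invariant bilinear forms is exactly one-dimensional (not just at most), which is what makes the comparison of $Q$ with the form induced from $\omega^{\otimes d}$ legitimate; your argument does use this, since you need the induced form on $\mathrm{Sym}^d(\R^2)$ to be nonzero (e.g.\ evaluate it on $e^{\odot d}$ and $f^{\odot d}$ for a symplectic basis $(e,f)$) and proportional to $Q$.
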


Since $V$ is a subspace of $\R^{2,n}$ with dimension greater than $1$, we distinguish four possibilities:

\begin{enumerate}
\item \label{case:dim=2} $\dim V=2$ and $V$ is a totally isotropic plane ;
\item \label{case:(1,2)} $\dim V=3$ and has signature $(1,2)$ ;
\item \label{case:(2,1)} $\dim V=3$ and has signature $(2,1)$ ;
\item \label{case:(2,3)} $\dim V=5$ and has signature $(2,3)$.
\end{enumerate}

We now treat each situation separately. We note $Q$ the quadratic form of $\R^{2,n}$. We wish to obtain the form of $\rho(X)$ and $\rho(Y)$, up to conjugacy in $P$, which is the stabilizer of the line $\R.v \subset \R^{2,n}$. So, we will say that a basis $(e_1,\ldots,e_{n+2})$ is adapted if $Q$ reads $2x_1x_{n+2}+2x_2x_{n+1}+x_3^2+ \cdots + x_n^2$ and $e_1=\lambda v$

\paragraph*{Case \ref{case:dim=2}.} The orthogonal $V^{\perp}$ is also a subrepresentation of $\rho$ and $Q$ is non-negative on $V^{\perp}$, with $\Ker(Q|_{V^{\perp}}) = V$. Since $Q|_{V^{\perp}} \geq 0$, Lemma \ref{lem:sl2_irred_orthogonal} ensures that any non-trivial irreducible subrepresentation of $\rho|_{V^{\perp}}$ must be an isotropic plane, \textit{i.e.} must coincide with $V$. Since $\rho|_{V^{\perp}}$ is completely reducible, this means that there exists a subspace $E$ such that $\rho|_E = 0$ and $V^{\perp} = V \oplus E$. Since $E$ is a Euclidean subspace of $\R^{2,n}$, $E^{\perp}$ has signature $(2,2)$ and is also a subrepresentation of $\rho$. If $V'$ is now an isotropic plane such that $E^{\perp} = V \oplus V'$ and if $(e_1,e_2,e_{n+1},e_{n+2})$ is a basis of $E^{\perp}$ adapted to this decomposition, such that $e_1 = v$ and the quadratic form reads $2x_1x_{n+2}+2x_2x_{n+1}$, then $\rho|_{E^{\perp}}$ has the form
\begin{equation*}
2aX+\sqrt 2 bY + \sqrt 2 cZ
\mapsto
\begin{pmatrix}
a & b & 0 & 0 \\
c & -a & 0 & 0 \\
0 & 0 & a & -b \\
0 & 0 & -c & -a
\end{pmatrix}
\in \so(E^{\perp}) \simeq \so(2,2)
\end{equation*}
If we complete this basis with an orthonormal basis of $E$, we obtain an adapted basis of $\R^{2,n}$ in which 
\begin{equation*}
2 \rho(X) = 
\begin{pmatrix}
1 &  & & & & &  \\
  &-1& & & & & \\
  &  &0& & & & \\
  &  & & \ddots & & & \\
  &  & & & 0 & & \\
  &  & & & & 1 & \\
  & & & & & & -1
\end{pmatrix}
\text{ and }
\sqrt 2 \rho(Y) =
\begin{pmatrix}
0 & 1 & 0 & \cdots & 0 & 0 & \\
 & 0 & & & & & \\
 & & 0 & & & & \\
 & & & \ddots & & & \vdots \\
 & & & & 0 & & 0 \\
 & & & & & 0 & -1 \\
 & & & & & & 0
\end{pmatrix}
.
\end{equation*}
\paragraph*{Case \ref{case:(1,2)}.} In this situation, $V^{\perp}$ is Lorentzian and supplementary to $V$. We then have two subcases.

\vspace*{.2cm}

\begin{enumerate}[label=\textbf{\alph*.}]
\item 
\label{case2a}
If $\rho|_{V^{\perp}} = 0$, then $\rho = (\rho|_V,0)$ (orthogonal decomposition). The Lorentzian representation $V$ has the form
\begin{equation*}
aX+bY+cZ
\mapsto
\begin{pmatrix}
a & b & 0 \\
c & 0 & -b \\
0 & -c & -a
\end{pmatrix}
\in \so(V) \simeq \so(1,2)
\end{equation*}
in a basis $(e_1,e_3,e_{n+2})$ such that $e_1 = v$ and the quadratic form reads $2x_1x_{n+2}+x_3^2$. Thus, this basis can be completed into an adapted basis of $\R^{2,n}$ in which we have
\begin{equation*}
\rho(X) = 
\begin{pmatrix}
1 & & & & \\
 & 0 & & & \\
 & & \ddots & & \\
 & & & 0 & \\
  & & & & -1
\end{pmatrix}
\end{equation*}

\item 
\label{case2b}
If $\rho|_{V^{\perp}} \neq 0$, then it is a faithful representation of $\sl(2,\R)$ into $\so(V^{\perp})$ and $V^{\perp}$ is Lorentzian. But up to conjugacy in $O(V^{\perp})$, this representation is unique. Indeed, it admits a non-trivial irreducible subrepresentation $V' \subset V^{\perp}$. By Lemma \ref{lem:sl2_irred_orthogonal}, the only possibility is that this subrepresentation is $3$-dimensional and Lorentzian. So, if $E = (V \oplus V')^{\perp}$, then $E$ is Riemannian and $\rho$-invariant, so $\rho|_E = 0$. Thus, $\rho$ is conjugate to $(\rho|_V,\rho|_{V'},0)$ (orthogonal decomposition). Thus, if $(e_1,e_3,e_{n+2})$ is the same basis of $V$ as in Case \ref{case:(1,2)}.a, if $(e_2,e_4,e_{n+1})$ is a basis of $V'$ in which $\rho|_{V'}$ has the form 
\begin{equation*}
aX+bY+cZ
\mapsto
\begin{pmatrix}
a & b & 0 \\
c & 0 & -b \\
0 & -c & -a
\end{pmatrix}
\in \so(V') \simeq \so(1,2)
\end{equation*}
and if we choose $(e_5,\ldots,e_n)$ an orthonormal basis of $E$, then $(e_1,\ldots,e_{n+2})$ is an adapted basis of $\R^{2,n}$ in which
\begin{equation*}
\rho(X) = 
\begin{pmatrix}
1 & & & & & & \\
 & 1 & & & & & \\
 & & 0 & & & &  \\
 & & & \ddots & & & \\
 & & & & 0 & & \\
 & & & & & -1 &  \\
 & & & & & & -1 
\end{pmatrix}
\text{ and }
\rho(Y) = 
\begin{pmatrix}
0 & 0 & 1 & 0 & 0 & \cdots & 0 \\
 & 0 & 0 & 1 & 0 & \cdots & 0 \\
 & & 0 & & & 0 & -1 \\
 & & & \ddots & & -1 & 0 \\
 & & & & & 0 & 0 \\
 & & & & & \vdots & \vdots \\
 & & & & & & 0
\end{pmatrix}
\end{equation*}

\end{enumerate}

\paragraph*{Case \ref{case:(2,1)}.} In this situation, $V^{\perp}$ is Riemannian. Therefore, $\rho|_{V^{\perp}} = 0$ and we are in a situation similar to Case \ref{case:(1,2)}.a. So, there is an adapted basis of $\R^{2,n}$ in which
\begin{equation*}
\rho(X) = 
\begin{pmatrix}
1 & & & & \\
 & 0 & & & \\
 & & \ddots & & \\
 & & & 0 & \\
 & & & & -1
\end{pmatrix}
\end{equation*}

\paragraph*{Case \ref{case:(2,3)}.} Here, $V^{\perp}$ is Riemannian and $\rho|_{V^{\perp}} = 0$. In this situation, we have an adapted basis such that $(e_1,e_2,e_3,e_{n+1},e_{n+2})$ is a basis of $V$ and
\begin{equation*}
\rho(X)
=
\begin{pmatrix}
2 & & & & & & \\
 & 1 & & & & & \\
 & & 0 & & & & \\
 & & & \ddots & & & \\
 & & & & 0 & & \\
 & & & & & -1 & \\
 & & & & & & -2
\end{pmatrix}
\end{equation*}

\subsubsection{Dynamics of $X$}
\label{sss:dynamics_of_X}

A fundamental property that can be easily read on the holonomy of a conformal transformation $f$ fixing a point $x$ is its \textit{linearizability} near $x$. Note $p$ the holonomy of $f$ at $\hx \in \pi^{-1}(x)$, \textit{i.e.} the unique $p \in P$ such that $\hat{f}(\hx) = \hx .p$. Recall that $P$ can be seen as the (affine) conformal group of $\R^{1,n-1}$, namely $\CO(1,n-1) \ltimes \R^n$.

\begin{proposition}[\cite{frances_localdynamics}, Prop.4.2]
The conformal diffeomorphism $f$ is linearizable near $x$ if and only if its holonomy is linear (as an affine transformation), i.e. its action on $\R^{1,n-1}$ has a fixed point.
\end{proposition}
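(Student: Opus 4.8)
The plan is to read off the local form of $f$ from its holonomy by means of the exponential map of the Cartan connection. For $X\in\g_{-1}$ small, set $\exp_x(X):=\pi(\phi^1_{\omega^{-1}(X)}(\hx))$, where $\phi^t_{\omega^{-1}(Y)}$ denotes the flow of the $\omega$-constant field $\omega^{-1}(Y)$; this is a smooth chart identifying a neighborhood of $x$ with a neighborhood of $0$ in $\g_{-1}\simeq T_xM\simeq\R^{1,n-1}$. Since $\hat{f}^*\omega=\omega$, the lift $\hat{f}$ preserves every field $\omega^{-1}(Y)$ and hence commutes with each flow $\phi^t_{\omega^{-1}(Y)}$; and from $R_p^*\omega=\Ad(p^{-1})\omega$ one gets $\phi^t_{\omega^{-1}(X)}\circ R_p=R_p\circ\phi^t_{\omega^{-1}(\Ad(p)X)}$. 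Combining these with $\hat{f}(\hx)=\hx\cdot p=R_p(\hx)$ and $\pi\circ R_p=\pi$ yields the key identity
\begin{equation*}
f(\exp_x(X))=\pi\big(\phi^1_{\omega^{-1}(\Ad(p)X)}(\hx)\big),
\end{equation*}
which expresses $f$ in the chart $\exp_x$ entirely through $\Ad(p)$.

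For the implication $(\Leftarrow)$, assume the holonomy $p$ has a fixed point for its affine action on $\R^{1,n-1}$. Changing the base point $\hx$ in its fiber conjugates $p$ inside $P$, and admitting a fixed point is exactly the condition that $p$ be $P$-conjugate to an element of the reductive part $G_0=\CO(1,n-1)$; so I may assume $p\in G_0$. Then $\Ad(p)$ preserves the grading, in particular $\Ad(p)X\in\g_{-1}$ for $X\in\g_{-1}$, and the right-hand side of the key identity collapses to $\exp_x(\Ad(p)X)$. Thus in the chart $\exp_x$ the map $f$ is \emph{exactly} the conformal-linear map induced by $\Ad(p)$ on $\g_{-1}$: the map $f$ is linearizable, its linear model being $D_xf$.

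For the converse $(\Rightarrow)$ I argue by contraposition, assuming $p$ has no fixed point. An affine similarity $X\mapsto \ell X+b$ of $\R^{1,n-1}$, with linear part $\ell\in\CO(1,n-1)$ (equal to $D_xf$) and translation part $b\in\g_1$, has no fixed point precisely when $b\notin\Ima(\ell-\Id)$, which forces $1$ to be an eigenvalue of $\ell$ and leaves a genuine parabolic block. The strategy is to expand the key identity to second order: since $\Ad(p)X$ now carries a non-zero $\g_1$-component, flowing along $\omega^{-1}(\Ad(p)X)$ produces, through the brackets of the grading (schematically $[\g_1,\g_{-1}]\subset\g_0$ and then $[\g_0,\g_{-1}]\subset\g_{-1}$), a genuine quadratic term in $\exp_x$-coordinates directed along the $1$-eigenspace of $\ell$. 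This is a resonant, non-removable term: a germ whose restriction to that eigendirection is parabolic (of the form $y\mapsto y+cy^2+\cdots$ with $c\neq 0$) is not smoothly conjugate to its linear part, so $f$ cannot be linearizable.

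The main obstacle is precisely this second-order analysis in the curved setting. Whereas the $(\Leftarrow)$ computation is exact because $\Ad(p)X$ never leaves $\g_{-1}$, in the no-fixed-point case the right-hand side of the key identity also receives contributions from the curvature $\Omega$ along the flow. I therefore have to check that the resonant quadratic coefficient coming from the algebraic term $[\Ad(p)X]_{\g_1}$ genuinely survives, i.e. is not cancelled by curvature corrections, which sit in other graded slots or at higher order. Carrying out this bookkeeping with the grading $\g=\g_{-1}\oplus\g_0\oplus\g_1$, combined with the classical non-linearizability of parabolic germs, is where the real work lies.
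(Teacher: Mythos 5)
The paper does not actually prove this proposition: it is imported verbatim from \cite{frances_localdynamics}, Prop.\ 4.2, so your attempt can only be judged on its own merits. Your ``if'' direction is correct and is the standard argument: the identity $f(\exp_x(X))=\pi\bigl(\phi^1_{\omega^{-1}(\Ad(p)X)}(\hx)\bigr)$ follows exactly as you say from $\hat{f}^*\omega=\omega$ and $R_p^*\omega=\Ad(p^{-1})\omega$, the equivalence ``affine fixed point $\Leftrightarrow$ $P$-conjugate into $G_0$'' is right, and once $p\in G_0$ the identity collapses to $\exp_x(\Ad(p)X)$; this is precisely what underlies the paper's subsequent remark that $T_xf$ is conjugate to $\Ad(p)|_{\g_{-1}}$.

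The genuine gap is the converse, which is the substantial half of the statement, and your write-up does not prove it. Your strategy rests on three steps, none of which is carried out: (i) that the $\g_1$-component of $\Ad(p)X$ produces a nonzero quadratic term along the $1$-eigenspace in the chart $\exp_x$ --- you do not compute this even in the flat model, where it could be extracted from the Bruhat-type factorization of $\exp(\Ad(p)X)$ in $\exp(\g_{-1})\cdot P$; (ii) that curvature contributions cannot cancel it --- this is not a routine check, since expressing $\pi\bigl(\phi^1_{\omega^{-1}(Y)}(\hx)\bigr)$ in the chart $\exp_x$ compares flows of \emph{distinct} $\omega$-constant fields, and $\Omega$ enters exactly at the quadratic order you need; you yourself defer this as ``where the real work lies,'' but it is the entire content of the hard implication; (iii) the dynamical conclusion is not valid as stated: ``the restriction of the germ to the $1$-eigendirection is parabolic'' presupposes that this direction is invariant under the \emph{nonlinear} map, not just under $T_xf$, and with the other eigenvalues $\lambda,\lambda^2$ present you would need a genuine Poincar\'e--Dulac argument (what is invariant under formal conjugacy is the resonant projection of the quadratic part, not an individual coefficient). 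As it stands, a reader cannot reconstruct the proof of $(\Rightarrow)$ from your text. Let me point out that a softer route, in the spirit of tools the paper does use (the equivariance of developments behind Lemma \ref{lem:fixed_points_holonomy}), avoids all Taylor expansions: if $f$ is linearizable then its fixed-point set near $x$ is a submanifold of dimension $\dim\Ker(T_xf-\Id)\geq 1$ (no affine fixed point forces the eigenvalue $1$), curves of fixed points of $f$ develop onto curves of fixed points of $p$ in $\Ein^{1,n-1}$, and one then contradicts the algebraic fact that an element of $P$ with no affine fixed point has all its fixed points near the base point confined to its lightcone, where they are too few.
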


Remark that the condition on the holonomy is invariant under conjugacy by elements of $P$, so that the choice of $\hx$ in $\pi^{-1}(x)$ has no influence on it. If $f$ is linearizable, choose a point $\hx$ in the fiber so that its holonomy $p$ is in $\CO(1,n-1)$, \textit{i.e.} has the form
\begin{equation*}
p = 
\begin{pmatrix}
\lambda & & \\
 & M &  \\
 & & \lambda^{-1}
\end{pmatrix}
\end{equation*}
with $\lambda >0$ and $M \in O(1,n-1)$. Because $p \in G_0$, it is not difficult to see that $T_x f$ is conjugate to $\Ad(p)|_{\g_{-1}}$ (see \cite{frances_localdynamics}, proof of Prop. 4.2). At this point, we can already conclude in several cases.

\paragraph*{Strongly stable dynamics.} Consider the Cases \ref{case:(1,2)}.\ref{case2a}, \ref{case:(2,1)}, and \ref{case:(2,3)}. What we have recalled above ensures that $\phi_X^{-t}$ is conjugate near $x_0$ to the flows (for $t \geq 0$)
\begin{equation*}
\begin{pmatrix}
e^{-t} & & \\
&  \ddots & \\
& & e^{-t}
\end{pmatrix}
\text{ (\ref{case:(1,2)}.\ref{case2a}, \ref{case:(2,1)}.) and }
\begin{pmatrix}
e^{-t} & & & & \\
 & e^{-2t} & & & \\
 & & \ddots & & \\
 & & & e^{-2t} & \\
 & & & & e^{-3t}
\end{pmatrix}
\text{ (Case \ref{case:(2,3)}).}
\end{equation*}
Thus, $\phi_X^{-t}$ has \textit{strongly stable} dynamics when $t \rightarrow +\infty$ (see Section 3.2 of \cite{frances_causal_fields}, the notion was first introduced by A. Zeghib in \cite{zeghib99}). By Proposition 4.(iii) of the same paper, we get that a neighborhood of $x_0$ is conformally flat.

\begin{remark}
These dynamics are prototypes of those studied in Frances' paper. In our situation, it is almost direct to verify that the Weyl-Cotton curvature must vanish in a neighborhood of $x_0$.
\end{remark}

\paragraph*{Vanishing of the Weyl-Cotton curvature on the Zero set of $X$.}

We are left to prove conformal flatness in Cases \ref{case:dim=2}. and \ref{case:(1,2)}.\ref{case2b}. In both situations, the flow $\phi_X^{-t}$, $t \geq 0$, is conjugate to 
\begin{equation*}
\begin{pmatrix}
1 & & & & \\
 & e^{-t} & & & \\
 & & \ddots & & \\
 & & & e^{-t} & \\
 & & & & e^{-2t}
\end{pmatrix}
\end{equation*}
This flow is not strongly stable, but just stable and it is not enough to conclude. So, we also consider the behavior of the flow of $Y$ near $x_0$ and use technical properties of conformal flows with non-linear and unipotent holonomy established in \cite{frances_melnick13}.

\vspace*{.2cm}

In Case \ref{case:dim=2}, the holonomy of $Y$ at $x_0$ has the form of a light-like translation of $\Ein^{1,n-1}$.  By Theorem 4.3 of \cite{frances_melnick13}, there exists an open, conformally flat subset $U \subset M$ such that $x_0 \in \bar{U}$. In Case \ref{case:(1,2)}.\ref{case2b}, the holonomy of $Y$ at $x_0$ has the form of the expression (20), Section 5.3 of \cite{frances_melnick13}. By Section 5.3.4 of the same paper, $x_0$ belongs to the closure of some conformally flat open set.

\begin{remark}
In Section 5. of \cite{frances_melnick13}, the authors study conformal vector fields of real-analytic Lorentzian manifolds. However, the real-analytic regularity is not used in the proofs of the two technical facts cited above.
\end{remark}

So, in both cases, the point $x_0$ is in the closure of a conformally flat open subset and by continuity, we get that $W_{x_0} = 0$. So, for the moment, we have come to the

\vspace*{.2cm}

\textbf{Partial conclusion:} If a point admits a $1$-dimensional $H$-orbit, then the Weyl tensor vanishes at this point.

\vspace*{.2cm}

Now, let $U$ be the linearization neighborhood of $\phi_X^t$. The latter admits a segment of fixed points in restriction to $U$. Note it $\Delta$. The holonomy of $Y$ gives us more information thanks to the notion of \textit{development of curves}. Even if it could be explained relatively easily, we will directly use the following property.

\begin{lemma}[Follows from \cite{frances_melnick10}, Prop. 5.3]
\label{lem:fixed_points_holonomy}
Let $x \in M$ and $\hx \in \pi^{-1}(x)$. Let $\gamma(t) = \pi(\exp(\hx,tX_0)) \in M$ and $\gamma_{\X}(t) = \pi_{\X}(e^{tX_0})$, where $\pi_{\X} : G \rightarrow \X = G/P$ is the natural projection, $X_0 \in \g$ and $t \in ]-\epsilon,\epsilon[$ with $\epsilon$ sufficiently small. Let $f \in \Conf(M,g)$ fixing $x$ and having holonomy $p$ at $\hx$. If the left action of $p$ on $\X$ fixes pointwisely the curve $\gamma_{\X}$, then the action of $f$ on $M$ fixes pointwisely $\gamma$.
\end{lemma}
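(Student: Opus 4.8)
The plan is to exploit the equivariance of the \emph{development map} of the Cartan geometry under the automorphism $f$, and then to observe that the development of a curve into $M$ depends only on its projection to the model space $\X$, not on the chosen lift to $G$. Throughout, $\hat f$ denotes the lift of $f$, which satisfies $\hat f^* \omega = \omega$ and $\hat f(\hx) = \hx . p$, and for a smooth curve $g : \, ]-\epsilon,\epsilon[ \, \to G$ with $g(0) = e$ I write $\mathrm{dev}_{\hx}(g)$ for its development from $\hx$: the unique lift $\hat c$ to $\hat M$ with $\hat c(0) = \hx$ and $\omega(\dot{\hat c}(t)) = g(t)^{-1}\dot g(t)$, followed by $\pi$. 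With this notation $\gamma$ is the development of $t \mapsto e^{tX_0}$, since the lift $\exp(\hx, tX_0)$ has constant $\omega$-derivative $X_0$.

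First I would establish the equivariance formula
\begin{equation*}
f\bigl(\mathrm{dev}_{\hx}(g)(t)\bigr) = \mathrm{dev}_{\hx}\bigl(p\, g\, p^{-1}\bigr)(t).
\end{equation*}
Since $\hat f$ preserves $\omega$, the curve $\hat f \circ \hat c$ has the same $\omega$-derivative as $\hat c$ but starts at $\hat f(\hx) = \hx.p$; hence it is the development of $g$ from the shifted frame $\hx.p$. A base-point-change computation then identifies developments from $\hx.p$ with those from $\hx$: right-translating a lift by the fixed element $p$ and using $(R_p)^*\omega = \Ad(p^{-1})\omega$ shows $\mathrm{dev}_{\hx.p}(g) = \mathrm{dev}_{\hx}(pgp^{-1})$, which gives the displayed identity. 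Applied to $g(t) = e^{tX_0}$ it yields $f(\gamma(t)) = \mathrm{dev}_{\hx}(e^{t\Ad(p)X_0})(t)$.

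The key step is then to show that $\mathrm{dev}_{\hx}$ factors through the projection $\pi_{\X} : G \to \X$: if two $G$-curves $g_1, g_2$ with $g_i(0)=e$ satisfy $g_1(t)P = g_2(t)P$ for all $t$, then $\mathrm{dev}_{\hx}(g_1) = \mathrm{dev}_{\hx}(g_2)$. Writing $g_2 = g_1 q$ with $q(t) \in P$ and $q(0) = e$, I would form the curve $\hat c_1(t).q(t)$ obtained by right-translating the lift of $g_1$ by the $P$-valued curve $q$, and compute its $\omega$-derivative using the two Cartan axioms $(R_q)^*\omega = \Ad(q^{-1})\omega$ and $\omega(A^*) = A$ for $A \in \p$. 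The result is $\Ad(q^{-1})(g_1^{-1}\dot g_1) + q^{-1}\dot q$, which is exactly $g_2^{-1}\dot g_2$; by uniqueness of lifts $\hat c_2 = \hat c_1 . q$, so the two developments agree after projecting by $\pi$. This computation, in which the $P$-equivariance of $\omega$ enters essentially, is the main technical point of the argument.

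Finally I would feed in the hypothesis. Since $f(x)=x$ forces $p \in P$, the assumption that the left action of $p$ fixes $\gamma_{\X}(t) = e^{tX_0}P$ pointwise reads $p\,e^{tX_0}P = e^{tX_0}P$, whence $p\,e^{tX_0}p^{-1}P = e^{tX_0}P$: the $G$-curves $t \mapsto e^{t\Ad(p)X_0}$ and $t \mapsto e^{tX_0}$ have the same projection to $\X$. By the factorization step their developments from $\hx$ coincide, so $\mathrm{dev}_{\hx}(e^{t\Ad(p)X_0}) = \gamma$, and combining this with the equivariance formula gives $f(\gamma(t)) = \gamma(t)$ for all $t$, as claimed. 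The only point to watch is the domain of validity: all constructions are local in $t$, defined on the interval where the Cartan exponential and the lifts exist, which is exactly the setting of the statement.
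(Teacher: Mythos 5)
Your proof is correct. Note that the paper itself contains no argument for this lemma: it is stated as a consequence of Prop.~5.3 of \cite{frances_melnick10}, so your write-up in effect reconstructs, in a self-contained way, the standard development-of-curves argument that underlies that citation rather than diverging from an internal proof. All three steps check out: (i) the equivariance $f(\mathrm{dev}_{\hx}(g)(t)) = \mathrm{dev}_{\hx}(pgp^{-1})(t)$ is right, since $\hat f \circ \hat c$ is the lift of $g$ from $\hx.p$, and the lift of $g$ from $\hx.p$ equals the right $p$-translate of the lift of $pgp^{-1}$ from $\hx$ by the computation $\Ad(p^{-1})\bigl(\Ad(p)(g^{-1}\dot g)\bigr) = g^{-1}\dot g$; (ii) the factorization step is the technical heart, and your identity $\omega\bigl(\tfrac{\d}{\d t}(\hat c_1 . q)\bigr) = \Ad(q^{-1})(g_1^{-1}\dot g_1) + q^{-1}\dot q = g_2^{-1}\dot g_2$ correctly invokes both equivariance axioms of the Cartan connection, together with uniqueness of lifts with prescribed $\omega$-derivative ($\omega$ being a parallelism); one should only add that $q = g_1^{-1}g_2$ is a smooth $P$-valued curve because $P$ is closed in $G$, and that in the case at hand $q(t) = e^{-tX_0}p\,e^{tX_0}p^{-1}$ lies in $P$ exactly by the hypothesis on $\gamma_{\X}$; (iii) the final translation $e^{t\Ad(p)X_0}P = p\,e^{tX_0}p^{-1}P = e^{tX_0}P$ is correct. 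Your closing remark about domains is also handled by your own construction: the development of $e^{t\Ad(p)X_0}$ exists on all of $]-\epsilon,\epsilon[$ precisely because you produce it as $\hat c_1 . q$ from the given curve $\exp(\hx,tX_0)$, so no separate completeness assumption is needed.
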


We can now see that in both cases, $Y$ also vanishes on the curve $\Delta$.

\begin{itemize}
\item \textbf{Case \ref{case:dim=2}.} Here, we have a non-zero $X_{-\alpha-2\beta} \in \g_{-\alpha-2\beta}$ such that $[X_h,X_{-\alpha-2\beta}]=0$. Thus, for all $s,t \in \R$, $e^{tX_h} e^{sX_{-\alpha - 2\beta}} = \e^{sX_{-\alpha -2 \beta}} \e^{tX_h}$. This proves that the curve $s \mapsto \pi(\exp(\hx_0,sX_{-\alpha - 2 \beta})$ coincides with $\Delta$ in a neighborhood of $x_0$. Moreover, since $Y_h \in \g_{\alpha}$, we also have $[Y_h,X_{-\alpha-2\beta}] = 0$. So, $\phi_Y^t$ also fixes pointwisely $\Delta$ near $x_0$.

\item \textbf{Case \ref{case:(1,2)}.\ref{case2b}} Here, we have a non-zero $X_{-\alpha} \in \g_{-\alpha}$ such that $[X_h,X_{-\alpha}] = 0$. The same reasoning as above gives that $\Delta$ coincides locally with the curve $s \mapsto \pi( \exp(\hx_0,sX_{-\alpha}))$. 

We have $Y_{\beta}$ and $Y_{\alpha + \beta}$ such that $Y_h = Y_{\alpha+\beta} + Y_{\beta}$ and $[Y_{\alpha+\beta},Y_{\beta}] = 0$. Neither $\alpha - \beta$ nor $2\beta$ are restricted roots. So, $e^{X_{-\alpha}}$ and $e^{Y_{\beta}}$ commute and since $\Ad(e^{tX_{\alpha + \beta}})X_{-\alpha} = X_{-\alpha} +\underbrace{ t[X_{\alpha + \beta},X_{-\alpha}]+(t^2/2)[X_{\alpha + \beta},[X_{\alpha + \beta},X_{-\alpha}]]}_{\in \g_{\beta} \oplus \g_{\alpha + 2\beta}}$ we have
\begin{equation*}
e^{tX_{\alpha + \beta}} e^{sX_{-\alpha}} e^{-tX_{\alpha + \beta}} = e^{sX_{-\alpha}} \underbrace{e^{s(t[X_{\alpha + \beta},X_{-\alpha}]+(t^2/2)[X_{\alpha + \beta},[X_{\alpha + \beta},X_{-\alpha}]])}}_{\in P}.
\end{equation*}
and finally $e^{tY_h}e^{sX_{-\alpha}} = e^{sX_{-\alpha}} p(s,t)$, with $p(s,t) \in P$. According to Lemma \ref{lem:fixed_points_holonomy}, we get that $\phi_Y^t$ fixes each point of the conformal geodesic $\pi(\exp(\hx_0,sX_{-\alpha}))$, that coincides with $\Delta$ in a neighborhood of $x_0$.
\end{itemize}

So, in both cases, the vector fields $X$ and $Y$ vanishes on $\Delta$ near $x_0$. Since $\dim(H.x_0) = 1$, any point in a neighborhood of $x_0$ has an $H$-orbit of dimension at least $1$. So, reducing $U$ if necessary, we have that for all $x \in \Delta$, $\dim H.x = 1$. By the previous partial conclusion, we know that $W$ vanishes in restriction to $\Delta$.

\paragraph*{Conclusion.}

Finally, $\phi_X^{-t}$ has a stable dynamics when $t \rightarrow +\infty$, and for all $x \in U$, $\phi_X^{-t}(x) \rightarrow x_{\infty} \in \Delta$, with $W_{x_{\infty}} = 0$. By Proposition 4. (i) of \cite{frances_causal_fields}, we obtain that $W|_U = 0$, proving that a neighborhood of $x_0$ is conformally flat in Cases \ref{case:dim=2}. and \ref{case:(1,2)}.\ref{case2b}

\subsection{Compact conical orbits}
\label{ss:2_dimensional_orbits}
Let $H$ be a connected Lie group locally isomorphic to $\SL(2,\R)$ that acts conformally on a Lorentzian manifold $(M,g)$. Assume that there exists a point $x_0 \in M$ such that $H.x_0$ is a compact conical orbit, with stabilizer $H_{x_0}$. We know that $\Ad_{\h}(H_{x_0}) \simeq \Z \ltimes U$, where $U$ denotes a unipotent one-parameter subgroup and the factor $\Z$ is generated by a non-trival hyperbolic element normalizing $U$. Let $f \in H_{x_0}$ be in the preimage by $\Ad_{\h}$ of this hyperbolic element. The action of $f$ in restriction to the orbit $H.x_0$ will almost completely prescribe its dynamics near the orbit, as the following proposition shows. 

\begin{proposition}
\label{prop:dynamic_of_f}
The conformal diffeomorphism $f$ is linearizable near $x_0$: there exists an open neighborhood of the origin $\mathcal{U} \subset T_{x_0}M$ and $U \subset M$ an open neighborhood of $x_0$, and a diffeomorphism $\psi : \mathcal{U} \rightarrow U$ such that $\psi$ conjugates $T_{x_0}f$ and $f$. Moreover, replacing $f$ by its inverse if necessary, we have a basis $(e_1,\ldots,e_n)$ of $T_{x_0}M$ in which $g_{x_0}$ reads $2x_1 x_n + x_2^2 + \cdots + x_{n-1}^2$, $X_{x_0} = e_1$ and
\begin{equation*}
T_{x_0}f = 
\begin{pmatrix}
1 & & & & \\
 & \lambda & & & \\
 & & \ddots & & \\
 & & & \lambda & \\
 & & & & \lambda^2
\end{pmatrix}
\begin{pmatrix}
1 & & & & \\
 & & & & \\
 & & R & & \\
 & & & & \\
 & & & & 1 
\end{pmatrix}
\end{equation*}
where $0 < \lambda < 1$ and $R$ is a rotation matrix of $\Span(e_2,\ldots,e_{n-1})$. 
\end{proposition}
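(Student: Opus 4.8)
The strategy is to use the Cartan-geometric framework to reduce the dynamics of $f$ near $x_0$ to an algebraic computation of its holonomy, exactly as in Section \ref{ss:1_dimensional_orbits}. The key observation is that $f$ is a hyperbolic element of the stabilizer $H_{x_0}$, so I will compute its holonomy $p \in P$ at a well-chosen lift $\hx_0 \in \pi^{-1}(x_0)$, show that $p$ is \emph{linear} (its action on $\R^{1,n-1} \simeq \g_{-1}$ has a fixed point), and then read off $T_{x_0}f$ from $\Ad(p)|_{\g_{-1}}$ via Proposition \ref{frances_localdynamics}[Prop.~4.2], just as in Section \ref{sss:dynamics_of_X}.

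First I would set up the holonomy representation. Let $(X,Y,Z)$ be the $\sl(2)$-triple with $\h_{x_0} = \R.X \oplus \R.Y$ adapted to the compact conical structure, so that $X$ generates the hyperbolic one-parameter subgroup $A^+$ and $Y$ generates $U$; recall from Section \ref{sss:tangential_information} that $X_{x_0} \ne 0$, $Y_{x_0}=0$, $g_{x_0}(X,X)=g_{x_0}(X,Z)=0$ and $g_{x_0}(Z,Z)>0$. Here $f$ lies in $A^+$ and $X$ is \emph{non-singular} at $x_0$, so the relevant holonomy is not the holonomy of a singular Killing field but of the transformation $f$ itself fixing $x_0$. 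I would exploit the fact that $f = \phi_X^{t_0}$ for the period $t_0$ found in Section \ref{ss:proof_2_dimensional_orbits}: since $X$ is a Killing field of the Cartan geometry whose flow fixes $x_0$ at time $t_0$, the holonomy of $f$ at $\hx_0$ is $e^{t_0 \hat{X}_h}$ where $\hat{X}_h := \omega_{\hx_0}(\hat X_{\hx_0})$ is the value of the lift. The crucial point is to identify the $\g$-valued curve $t \mapsto \omega_{\phi_X^t(\hx_0)}(\hat X)$ and recognize $\hat X_h$ as an element whose adjoint action on $\g_{-1}$ matches the announced matrix. Since $A^+$ normalizes $U$ and the restriction of the metric to the degenerate orbit is prescribed (the $\d x_1^2$ form inherited from $\N^+$), the conformal distortion of $f$ along the orbit forces $\hat X_h$ to be conjugate to an element of $\a \oplus \m$ with the specific weights $0,1,\dots,1,2$ under the relevant root; this produces precisely the diagonal $\mathrm{diag}(1,\lambda,\dots,\lambda,\lambda^2)$ part, while the rotation block $R$ comes from the compact factor $\m \simeq \so(n-2)$ centralizing $X$.

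The main steps, in order, are: (i) establish that $f$ is linearizable by showing its holonomy is linear — this follows because $f$ preserves the degenerate direction $X_{x_0}$ and acts as a genuine conformal-linear map on the transverse metric, so no translation part survives; (ii) compute $\Ad(p)|_{\g_{-1}}$ and match it with the stated normal form, using the signature $2x_1x_n + x_2^2 + \cdots + x_{n-1}^2$ of $g_{x_0}$ with $X_{x_0}=e_1$ spanning the isotropic kernel direction; (iii) pin down the eigenvalues as $1, \lambda, \dots, \lambda, \lambda^2$ with $0<\lambda<1$ (after replacing $f$ by $f^{-1}$ if needed) by invoking the conformal distortion cocycle of $\phi_X^t$ and the requirement that $X$ be preserved, which forces the eigenvalue on $\R.X_{x_0}=\R.e_1$ to be $1$ and the conformal factor on the transverse Euclidean block to be $\lambda$.

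\textbf{The main obstacle.} The delicate step is (i)–(ii): proving that the holonomy of $f$ is linear and has exactly the claimed diagonal-times-rotation shape, rather than carrying an unwanted unipotent or translation component. The difficulty is that \emph{a priori} $\hat X_h$ need only lie in $\p = \g_0 \oplus \g_1$, and one must rule out a nonzero $\g_1$-component. I expect to handle this by using that $f$ fixes the whole $2$-dimensional orbit $H.x_0$ (not just $x_0$) together with the normalization relation $f U f^{-1} = U$ up to the hyperbolic scaling: the $\Z \ltimes U$ structure of $\Ad_\h(H_{x_0})$ forces the holonomy to preserve both the line $\R.X_{x_0}$ and the isotropic direction, and the compactness of the orbit bounds the conformal distortion so that the semisimple (diagonalizable) part dominates. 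Concretely, the existence of the periodic orbit and the boundedness of $g_0$ along it — the mechanism already used in Lemma \ref{lem:couple_inessential} — should preclude any nontrivial unipotent holonomy, leaving $p$ conjugate into $\CO(1,n-1)$ in the form displayed in Section \ref{sss:dynamics_of_X}, from which the normal form for $T_{x_0}f$ follows by the identification $T_{x_0}f \sim \Ad(p)|_{\g_{-1}}$.
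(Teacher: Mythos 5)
Your overall framework---pass to the Cartan bundle, express everything through the holonomy $p\in P$ of $f$ at a well-chosen $\hx_0$, and reduce the normal form to algebra in $\so(2,n)$---is indeed the paper's strategy. But the execution has a fatal error at its foundation. You claim that the holonomy of $f=\phi_X^{t_0}$ at $\hx_0$ is $e^{t_0 \hat X_h}$ with $\hat X_h=\omega_{\hx_0}(\hat X_{\hx_0})$. This is wrong on two counts. First, $X_{x_0}\neq 0$ (it equals $e_1$ in the statement), so $\omega_{\hx_0}(\hat X_{\hx_0})\notin\p$: the formula ``holonomy of $\phi_X^t$ equals $e^{tX_h}$'' applies only at a \emph{singularity} of the field, which $x_0$ is not (it lies on a periodic orbit, not a zero of $X$). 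Second, the lifted flow satisfies $\hat\phi_X^{t}(\hx_0)=\exp(\hx_0,t\hat X_h)$, the Cartan-geometric exponential curve, and in a curved geometry this point is \emph{not} $\hx_0.e^{t\hat X_h}$; the group element $p$ with $\hat f(\hx_0)=\hx_0.p$ depends on the curvature along the whole orbit and cannot be computed from the single value $\hat X_h$. The only algebraic constraint one can legitimately extract is the one the paper uses: since $\Ad_{\h}(f)X=X$ and $\hat f^*\omega=\omega$, the holonomy satisfies $\Ad(p)\,\iota_{\hx_0}(X)=\iota_{\hx_0}(X)$, i.e.\ $p$ lies in the algebraic stabilizer $P^{\hx_0}$ of $\iota_{\hx_0}(X)$. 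Everything must then be deduced from membership in that group, not from an exponential formula.

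Consequently, the two steps you yourself flag as the main obstacle are left without any actual argument. Your proposed mechanism---boundedness of metric coefficients along the compact orbit, as in Lemma \ref{lem:couple_inessential}, should ``preclude any nontrivial unipotent holonomy'' and make ``the semisimple part dominate''---does not engage with the object in question: the unipotent and translation parts of $p$ live in $P$, and bounding the conformal distortion along the orbit gives no control over them (note also that $f$ does \emph{not} fix the $2$-torus $H.x_0$ pointwise, only the circle $\{\phi_X^t(x_0)\}$). The paper's proof at this point is genuinely algebraic: it takes the Jordan decomposition $p=p_hp_up_e$ inside the algebraic group $P^{\hx_0}$; normalizes $p_h$ to diagonal form by moving $\hx_0$ in its fiber, using $\R$-splitness to kill the shear entry; shows $p_u=\exp(Z^u_{\alpha})$ by commutation with $p_h$ and then $Z^u_{\alpha}=0$ via the bracket identity $[X_{-\alpha},Z^u_{\alpha}]=\mu B_{\theta}(X_{-\alpha},X_{-\alpha})A_{\alpha}$, which would contradict $\Ad(p_u)$ fixing $\iota_{\hx_0}(X)$ mod $\g_1$ unless $\mu=0$; and finally produces a \emph{common} affine fixed point of $p_h$ and $p_e$ on $\R^{1,n-1}$ by a convex-hull argument, which is what yields linearizability and the diagonal-times-rotation form of $T_{x_0}f$. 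None of these three arguments (triviality of the unipotent part, the fiber renormalization of $p_h$, the common fixed point for the elliptic part) appears in your proposal, and your step (i) (``no translation part survives because $f$ acts as a genuine conformal-linear map'') assumes exactly what is to be proved.
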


The eventual ``compact noise'' commutes with the first matrix and has no influence on the dynamics. The arguments that we developed in \cite{article3} in a similar context are easily adaptable here to the dynamics of $f$, and it will not be a difficult problem to prove conformal flatness of a neighborhood of $H.x_0$. Thus, the important point here is to describe the action of $f$, and for this we make a crucial use of the Cartan geometry associated to $(M,[g])$ to reduce the problem to an algebraic question.

\subsubsection{Algebraic description of the holonomy of $f$}

Let $(M,g)$ be a Lorentzian manifold, and $\pi : \hat{M} \rightarrow M$ and $\omega$ be the Cartan bundle and Cartan connection defined by $[g]$. For all $\hx \in \hat{M}$, we have a linear isomorphism $\varphi_{\hx} : T_xM \rightarrow \g / \p$ defined as follows. If $v \in T_xM$, let $\hat{v} \in T_{\hx} \hat{M}$ such that $\pi_* \hat{v} = v$. Then, $\varphi_{\hx}(v)$ is (well-)defined as the projection of $\omega_{\hx}(\hat{v})$ in $\g/\p$. If $\bar{\Ad}$ denotes the representation of $P$ on $\g / \p$ induced by the adjoint representation, then $\varphi_{\hx.p} = \bar{\Ad}(p^{-1}) \varphi_{\hx}$ (\cite{sharpe}, Ch.5 , Th.3.15). There exists a Lorentzian quadratic form $Q$ on $\g/\p$, such that $\bar{\Ad}(P) < \Conf(\g / \p, Q)$ and such that, by construction of the Cartan geometry associated to $(M,[g])$, the map $\varphi_{\hx}$ sends $g_x$ on a positive multiple of $Q$. 

We saw in Section \ref{sss:cartan_decomposition} that $\g$ admits a grading $\g = \g_{-1} \oplus \g_0 \oplus \g_1$, where $\p = \g_0 \oplus \g_1$, and $\g_{-1} = \g_{-\alpha} \oplus \g_{-\alpha -\beta} \oplus \g_{-\alpha - 2\beta}$. Moreover, $P$ contains a Lie subgroup $G_0$ with Lie algebra $\g_0$ and such that $P \simeq G_0 \ltimes \g_1$ (\cite{cap_slovak}, Prop.1.6.3). Identifying $\g / \p \simeq \g_{-1}$, the lines $\g_{-\alpha}$ and $\g_{-\alpha -2\beta}$ are isotropic with respect to $Q$, and the orthogonal of the Lorentzian plane they span is $\g_{-\alpha -\beta}$. We choose a basis of $(e_1,\ldots,e_n)$ of $\g_{-1}$ such that $e_1 \in \g_{-\alpha}$, $\g_{-\alpha-\beta} = \Span(e_2,\ldots,e_n)$, $e_n \in \g_{-\alpha-2\beta}$, and in which $Q$ reads $2x_1x_n +x_2^2 + \cdots + x_{n-1}^2$. The adjoint action of $G_0$ preserves $\g_{-1}$ and, in the basis we chose, gives an identification $G_0 \simeq \CO(1,n-1) = \R_{>0} \times O(1,n-1)$.

\vspace*{.2cm}

Now, let $H$ be a Lie group locally isomorphic to $\SL(2,\R)$ acting conformally on $(M,g)$, with a compact conical orbit $H.x_0$. Let $f \in H_{x_0}$ be the hyperbolic element we chose at the beginning of this section and let $U=\{e^{tY}, \ t \in \R\} < H_{x_0}$ the unipotent one parameter subgroup normalized by $f$. Diagonalizing $\Ad(f)$, we get $X,Z \in \h$ such that $(X,Y,Z)$ is an $\sl(2)$-triple, with $\Ad(f)X = X$, $\Ad(f) Y = \lambda^{-1} Y$ and $\Ad(f)Z = \lambda Z$, with $\lambda >0$, $\lambda \neq 1$. Since $Y \in \h_{x_0}$, necessarily $X_{x_0}$ is isotropic and orthogonal to $Z_{x_0}$, and $g_{x_0}(Z,Z)>0$ (see Section \ref{sss:tangential_information}).

Let $\hx_0$ be a point in $\pi^{-1}(x_0)$ and let $\iota_{\hx_0}(X) = \omega_{\hx_0}(\hat{X}_{\hx_0})$. Since $X_{x_0}$ is an isotropic vector of $T_{x_0}M$, the projection of $\iota_{\hx_0}(X)$ in $\g/\p$ is isotropic with respect to $Q$. Since $\Ad(G_0)|_{\g_{-1}} \simeq \CO(\g_{-1},Q)$, it acts transitively on the set of isotropic vectors of $\g_{-1}$. Thus, there is $g_0 \in G_0 < P$ such that $\Ad(g_0) \iota_{\hx_0}(X) = \iota_{\hx_0.g_0}(X) \in \g_{-\alpha} + \p$. Hence, there is a choice of $\hx_0$, in the fiber over $x_0$, such that $\iota_{\hx_0}(X) = X_{-\alpha} + X_{\p}$, and we keep this element $\hx_0$. It will be modified in the sequel, but in a way that does not change the projection of $\iota_{\hx_0}(X)$ in $\g / \p$.

\vspace*{.2cm}

Let $p \in P$ be the holonomy of $f$ at $\hx_0$. We have $\Ad_{\g} (p) \iota_{\hx_0}(X) = \iota_{\hx_0.p}(X) = \iota_{\hat{f}(\hx_0)}(X) = \iota_{\hx_0} (\Ad_{\h}(f)(X)) = \iota_{\hx_0}(X)$. So, let us define
\begin{equation*}
P^{\hx_0} = \{p' \in P \ | \ \Ad(p')\iota_{\hx_0}(X) = \iota_{\hx_0}(X)\}.
\end{equation*}
It is an algebraic subgroup of $P$, and $p \in P^{\hx_0}$. Remark that for all $p' \in P$, $P^{\hx_0.p'} = p' P^{\hx_0}p'^{-1}$.

\paragraph*{Stabilizer of $X_{-\alpha}$ modulo $\p$.}

According to the decomposition $P = G_0 \ltimes \g_1$, every element of $P$ can be written $p' = g_0 \exp(Z_1)$, with $g_0 \in G_0$ and $Z_1 \in \g_1$. Now, $[\g_1,\g_{-1}] \subset \g_0$, so $\bar{\Ad}(\exp(\g_1))$ is trivial on $\g/\p$, and $\bar{\Ad}(p') = \bar{\Ad}(g_0) = \Ad(g_0)|_{\g_{-1}}$ if we identify $\g / \p $ and $\g_{-1}$. Thus, $\bar{\Ad}(p')$ fixes $X_{-\alpha} \text{ mod.} \, \p$ if and only if $\Ad(g_0)$ fixes $X_{-\alpha}$. If we reuse the decomposition of Section \ref{sss:cartan_decomposition}, we see that an element $g_0$ fixing $X_{-\alpha}$ has the form
\begin{equation}
\label{eq:g_0}
g_0 = 
\begin{pmatrix}
x & & & & \\
& x & & & \\
& & k & & \\
& & & x^{-1} & \\
& & & & x^{-1}
\end{pmatrix}
\exp(T_{\beta})
\end{equation}
with $x >0$, $k \in M \simeq O(n-2)$ and $T_{\beta} \in \g_{\beta}$.

\paragraph*{Conformal distortion.}

The group $P^{\hx_0}$ being algebraic, we can consider the Jordan decomposition of $p$: it decomposes into a commutative product $p = p_hp_up_e$ of hyperbolic, unipotent and elliptic elements of $P^{\hx_0}$ (\cite{morris}, Section 4.3). Write $p_h = g_0^h \exp(Z_1^h)$, $p_u = g_0^u \exp(Z_1^u)$, $p_e =g_0^e \exp(Z_1^e)$. Since $\bar{\Ad} : P \rightarrow \CO(\g / \p, Q)$ is an algebraic morphism, $g_0^h$, $g_0^u$ and $g_0^e$ are respectively hyperbolic, unipotent and elliptic elements of $G_0$. Thus, we necessarily have
\begin{align*}
& g_0^h = 
\begin{pmatrix}
x_h & & & & \\
& x_h & & & \\
& & k_h & & \\
& & & x_h^{-1} & \\
& & & & x_h^{-1}
\end{pmatrix}
\exp(T_{\beta}^h)
,
\quad
&
g_0^u =
\begin{pmatrix}
1 & & & & \\
& 1 & & & \\
& & k_u & & \\
& & & 1 & \\
& & & & 1
\end{pmatrix}
\exp(T_{\beta}^u)
\\
& g_0^e =
\begin{pmatrix}
1 & & & & \\
& 1 & & & \\
& & k_e & & \\
& & & 1 & \\
& & & & 1
\end{pmatrix}
\exp(T_{\beta}^e)
,
\quad
&
\end{align*}
with $k_h$, $k_u$ and $k_e$ respectively hyperbolic, unipotent and elliptic elements of $O(n-2)$. Thus, we have $k_h=k_u=I_{n-2}$. Moreover, the map $\varphi_{\hx_0}$ conjugates $T_{x_0}f$ to $\bar{\Ad}(p)$. We deduce that $x_h^{-2}$ is the conformal distortion of $T_{x_0}f$. We have $\lambda > 0$, $\lambda \neq 1$ such that $\Ad(f) Z = \lambda Z$, implying $T_{x_0}f . Z_{x_0} = \lambda Z_{x_0}$. Since $g_{x_0}(Z,Z) > 0$, the conformal distortion of $f$ at $x_0$ is equal to $\lambda^2$. This proves that $x_h = \lambda^{-1} \neq 1$. Replacing $f$ by its inverse if necessary, we assume that $\lambda \in ]0,1[$.

\paragraph*{Hyperbolic component.}

If we let $T_{\beta}^0 : = \frac{1}{1-\lambda}T_{\beta}^h$ and $p_{\beta} = \exp(T_{\beta}^0)$, we obtain that 
\begin{equation*}
p_{\beta} p_h {p_{\beta}}^{-1} = 
\begin{pmatrix}
\lambda^{-1} & & & & \\
& \lambda^{-1} & & & \\
& & I_{n-2} & & \\
& & & \lambda & \\
& & & & \lambda
\end{pmatrix}
\exp(\Ad(p_{\beta})Z_1^h).
\end{equation*}
This choice of conjugacy comes in fact from an interpretation of $P$ as the (affine) conformal group of $\R^{1,n-1}$. Now, let $(x_1,\ldots,x_n)$ be the coordinates of $\Ad(p_{\beta})Z_1^h$, seen as a vector of $(\R^n)^*$, \textit{i.e.}
\begin{equation*}
\Ad(p_{\beta})Z_1^h
=
\begin{pmatrix}
0 & x_1 & x_2 & \cdots & x_{n-1} & x_n & 0 \\
  & 0 & 0 & \cdots & 0 & 0 & - x_n \\
  & & & & &0 & -x_2 \\
  & & & & &\vdots & \vdots \\
  & & & & &0 & -x_{n-1} \\
  & & & & &0 & -x_1 \\
  & & & & & & 0
\end{pmatrix}
\end{equation*}
Then, the $2 \times 2$ block in the upper left corner of $p_{\beta}p_h{p_{\beta}}^{-1}$ is
\begin{equation*}
\begin{pmatrix}
\lambda^{-1} & \lambda^{-1} x_1 \\
0 & \lambda^{-1}
\end{pmatrix}
.
\end{equation*}
Since $p_{\beta} p_h {p_{\beta}}^{-1}$ is $\R$-split, this block matrix must be $\R$-split, and we get $x_1 = 0$. So, if we choose $Z_1^1 = (0,\frac{1}{1-\lambda}x_2,\ldots,\frac{1}{1-\lambda}x_{n-1},\frac{1}{1-\lambda^2}x_n)$ and let $p_1 = \exp(Z_1^1)$ then
\begin{equation*}
p_1p_{\beta}p_h{p_{\beta}}^{-1}p_1^{-1} = 
\begin{pmatrix}
\lambda^{-1} & & & & \\
 & \lambda^{-1} & & & \\
 & & I_{n-2} & & \\
 & & & \lambda & \\
 & & & & \lambda
\end{pmatrix},
\end{equation*}
with $p_{\beta} \in \exp(\g_{\beta})$ and $p_1 \in \exp(\g_{\alpha + \beta} \oplus \g_{\alpha + 2\beta}) \subset \exp(\g_1)$. Note that the adjoint actions $\bar{\Ad}(p_{\beta})$ and $\bar{\Ad}(p_1)$ on $\g / \p$ fix the projection of $\g_{-\alpha}$. So, let us replace $\hx_0$ by $\hx_0.(p_1p_{\beta})^{-1}$. The component of $\iota_{\hx_0}(X)$ on $\g_{-1}$ is still $X_{-\alpha}$ and $p_h$ has now the diagonal form we have exhibited above.

\paragraph*{Trivial unipotent component.} 

As we observed before, the decomposition of $p_u$ according to $P = G_0 \ltimes \g_1$ is $p_u = \exp(T_{\beta}^u) \exp(Z_1^u)$. Let us decompose $Z_1^u = Z_{\alpha}^u + Z_{\alpha + \beta}^u + Z_{\alpha + 2\beta}^u$, the indices indicating in which root-spaces the elements are. Using the fact that $p_h \in A = \exp(\a)$, we see that
\begin{align*}
p_hp_up_h^{-1} & = \exp(\Ad(p_h)T_{\beta}^u) \exp(\Ad(p_h)(Z_{\alpha}^u + Z_{\alpha+\beta}^u + Z_{\alpha + 2\beta}^u)) \\
                  & = \exp(\lambda^{-1} T_{\beta}^u). \exp(Z_{\alpha}^u + \lambda^{-1} Z_{\alpha+\beta}^u + \lambda^{-2} Z_{\alpha + 2\beta}^u)
\end{align*}
Since $p_h$ and $p_u$ commute, by uniqueness of the decomposition $P = G_0 \ltimes \g_1$, we get $T_{\beta}^u = 0$, $Z_{\alpha+\beta}^u = 0$ and $Z_{\alpha + 2\beta}^u = 0$. So, $p_u = \exp(Z_{\alpha}^u)$. 

\vspace*{.2cm}

We finally consider $\Ad(p^u)X_{-\alpha}$ modulo $\g_1$. Let us write $\iota_{\hx_0}(X) = X_{-\alpha} + X_0 \text{ mod.} \, \g_1$, where $X_0 \in \g_0$. On the one hand, we have $\Ad(p^u) \iota_{\hx_0}(X) = \iota_{\hx_0}(X)$. Since $p^u \in \exp(\g_1)$, we have $\Ad(p^u) X_0 = X_0 \text{ mod.} \, \g_1$. Thus, $\Ad(p^u) \iota_{\hx_0}(X) = \Ad(p_u)X_{-\alpha} + X_0 \text{ mod.} \, \g_1$, and we obtain
\begin{equation*}
\Ad(p^u)X_{-\alpha} = X_{-\alpha} \text{ mod.} \, \g_1.
\end{equation*}
But on the other hand, since $\g_{\pm \alpha}$ has dimension $1$, $Z_{\alpha}^u$ is a multiple of $\theta X_{-\alpha}$. So, if $Z_{\alpha}^u = \mu \theta X_{-\alpha}$ with $\mu \in \R$, by Proposition 6.52.(a) of \cite{knapp}, we have $[X_{-\alpha},Z_{\alpha}^u] = \mu B_{\theta}(X_{-\alpha},X_{-\alpha}) A_{\alpha}$, where $A_{\alpha} \in \a$ is the element associated to $\alpha$ by the Killing form $B$ and $B_{\theta} = -B(\theta.,.)$. So,
\begin{align*}
\Ad(e^{Z_{\alpha}^u}) X_{-\alpha} & = X_{-\alpha} + \underbrace{[Z_{\alpha}^u,X_{-\alpha}]}_{ \in \a} + \frac{1}{2} \underbrace{[Z_{\alpha}^u,[Z_{\alpha}^u,X_{-\alpha}]]}_{\in \g_{\alpha}} \\
                                  & = X_{-\alpha} - \mu B_{\theta}(X_{-\alpha},X_{-\alpha}) A_{\alpha} \text{ mod.} \, \g_1.
\end{align*}
So, we must have $\mu = 0$, \textit{i.e.} $p^u = \id$.

\paragraph*{Elliptic component.} 

Consider now $P$ as the conformal group of $\R^{1,n-1}$. Since $p_h$ has a diagonal form, its conformal affine action fixes a vector $v_0 \in \R^{1,n-1}$. As for any elliptic element of $\SL(N,\R)$, the set $\{(p_e)^n, \ n \in \Z\}$ is relatively compact in $P$. Therefore, the orbit of $v_0$ under iterations of $p_e$ is also relatively compact. Consider the convex hull
\begin{equation*}
C = \text{Conv}\left ( \overline{ \{(p_e)^n. v_0, \ n\in \Z \} } \right ) \subset \R^{1,n-1}.
\end{equation*}
It is a compact, $p_e$-invariant, convex subset of $\R^{1,n-1}$. Since $p_e$ acts affinely, it has a fixed point in $C$. Moreover, $p_h$ commutes with $p_e$ and fixes $v_0$. So, it fixes every point of $C$. Thus, $p_e$ and $p_h$ admit a common fixed point in $\R^{1,n-1}$: it is then a fixed point for $p = p_hp_e$, proving that $f$ is linearizable near $x_0$.

\paragraph*{Derivative of $f$.} 

As we recalled above the derivative $T_{x_0}f$ is conjugate by $\varphi_{\hx_0}$ to the adjoint action $\bar{\Ad}(p)$ on $\g/\p$, which is the commutative product $\bar{\Ad}(p_u) \bar{\Ad}(p_e)$. In the basis of $\g_{-1}$ we choose at the beginning of this section, we have
\begin{equation*}
\bar{\Ad}(p_h) =
\begin{pmatrix}
1 & & & & \\
 & \lambda & & & \\
 & & \ddots & & \\
 & & & \lambda & \\
 & & & & \lambda^2
\end{pmatrix},
\end{equation*}
the eigenspaces for $1$ and $\lambda^2$ being the projections of $\g_{-\alpha}$ and $\g_{-\alpha-2\alpha}$ respectively. Since $\bar{\Ad}(p_e)$ commutes with $\bar{\Ad}(p_u)$, it preserves the lines $\g_{-\alpha} \text{ mod.} \, \p$ and $\g_{-\alpha - 2\beta} \text{ mod.} \, \p$. The standard form of linear Lorentzian isometries fixing two isotropic lines finally gives the desired form of $\bar{\Ad}(p_e)$.

\subsubsection{Vanishing of the Weyl-Cotton tensor near $x_0$}

Now that the action of $f$ near $x_0$ has been determined, we can prove that $x_0$ is contained in a conformally flat open subset. The arguments are basically the same than those of Section 4.1 of \cite{article3}. We summarize them briefly. The first step is to see that the Weyl curvature vanishes in restriction to the orbit of $x_0$. We note $(e_1,\ldots,e_n)$ the basis given by Proposition \ref{prop:dynamic_of_f} and $\mathcal{H} = \Span(e_2,\ldots,e_n)$. Using the fact that the $(3,1)$-Weyl tensor is $f$-invariant and considering the contraction rates, we see that
\begin{enumerate}
\item $W_{x_0}(\mathcal{H},\mathcal{H},\mathcal{H}) = 0$
\item $W_{x_0}(T_{x_0}M,T_{x_0}M,T_{x_0}M) \subset \mathcal{H}$.
\end{enumerate}
The point is then the following fact.

\begin{lemma}[\cite{article3}, Lemma 4.5]
\label{lem:two_hyperplanes}
Let $\mathcal{H}_1$ and $\mathcal{H}_2$ be two degenerate hyperplanes of $T_{x_0}M$. Assume that they both satisfy points 1. and 2. above. Then, $\mathcal{H}_1 \neq \mathcal{H}_2 \Rightarrow W_{x_0} = 0$.
\end{lemma}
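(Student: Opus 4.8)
The plan is to treat this as a purely linear-algebraic statement about the single tensor $W := W_{x_0}$ on the Lorentzian vector space $(V,q) := (T_{x_0}M, g_{x_0})$. I would pass to the fully covariant form $\mathcal{W}(A,B,C,D) := q(W(A,B)C,\,D)$, which carries the usual curvature symmetries: antisymmetry in $(A,B)$ and in $(C,D)$, the pair-symmetry $\mathcal{W}(A,B,C,D)=\mathcal{W}(C,D,A,B)$, and the first Bianchi identity. Each degenerate hyperplane is the orthogonal $\mathcal{H}_i = \ell_i^{\perp}$ of its radical, an isotropic line $\ell_i = \R\,u_i$. In this language point~2 reads $\mathcal{W}(A,B,C,u_i)=0$ for all $A,B,C$, while point~1 reads $\mathcal{W}(A,B,C,D)=0$ whenever $A,B,C\in\mathcal{H}_i$.

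First I would propagate point~2 to every slot. Using antisymmetry in the last pair and then the pair-symmetry, the vanishing $\mathcal{W}(\cdot,\cdot,\cdot,u_i)=0$ forces $\mathcal{W}(A,B,u_i,C)=0$, then $\mathcal{W}(u_i,B,C,D)=\mathcal{W}(C,D,u_i,B)=0$, and finally $\mathcal{W}(A,u_i,C,D)=0$ by antisymmetry in the first pair. Thus $\mathcal{W}$ vanishes as soon as one of its four arguments equals $u_i$, hence (by multilinearity) as soon as one argument lies in $\ell_i$.

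The geometric heart is the relative position of the two radicals. Since $\mathcal{H}_1\neq\mathcal{H}_2$ the isotropic lines $\ell_1,\ell_2$ are distinct, and because a Lorentzian form admits only totally isotropic subspaces of dimension at most $1$, the plane $\ell_1\oplus\ell_2$ cannot be totally isotropic; therefore $q(u_1,u_2)\neq 0$. Consequently $P:=\ell_1\oplus\ell_2$ is a non-degenerate (Lorentzian) plane, its orthogonal $E:=P^{\perp}$ is a Euclidean subspace with $V=P\oplus E$, and crucially $E\subset \ell_1^{\perp}=\mathcal{H}_1$. I would then finish by a dimension reduction: decomposing each argument of $\mathcal{W}$ into its $P$-part and its $E$-part and expanding by multilinearity, every term carrying a $P$-component vanishes by the first step (a $P$-component is a combination of $u_1$ and $u_2$). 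Hence $\mathcal{W}(A,B,C,D)=\mathcal{W}(A_E,B_E,C_E,D_E)$ with $A_E\in E$ the $E$-component; since $A_E,B_E,C_E\in E\subset\mathcal{H}_1$, point~1 makes this vanish, so $\mathcal{W}\equiv 0$, i.e. $W_{x_0}=0$.

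The one step that is genuinely essential — and where I expect the real obstacle — is the non-orthogonality $q(u_1,u_2)\neq 0$: it is precisely what prevents $\ell_1\oplus\ell_2$ from being a degenerate plane, and it uses the Lorentzian signature (Witt index $1$) in an irreducible way; if the two radicals could be orthogonal, $\mathcal{W}$ would only be forced to vanish on a degenerate subspace and the reduction would collapse. A secondary point needing care is $\dim M=3$, where $W$ is the Cotton tensor and the pair-symmetry is unavailable: there the propagation of point~2 to all three slots must instead be extracted from the cyclic identity together with the trace-free condition, after which the reduction onto the Euclidean factor $E$ goes through verbatim.
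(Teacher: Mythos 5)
Your proposal cannot be compared line-by-line with this paper, because the paper contains no proof of the statement: the lemma is imported from \cite{article3} (Lemma 4.5) and used as a black box. Judged on its own merits, your argument for $\dim M \geq 4$ is correct and complete: translating points 1 and 2 into the $(4,0)$-tensor $\mathcal{W}$, propagating point 2 to all four slots via the two antisymmetries and the pair symmetry, noting that Witt index $1$ forces $q(u_1,u_2) \neq 0$ for the two radicals, and reducing over the splitting $V = P \oplus E$ with $E = P^{\perp} \subset \mathcal{H}_1$ is a full proof. (Two side remarks: the first Bianchi identity you list is never used, and point 1 is needed for only one of the two hyperplanes in this case.)

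The one part you leave as a sketch is the $3$-dimensional (Cotton) case, and your flag there is accurate: without pair symmetry, point 2 does not propagate by symmetries alone. Your proposed repair does close the argument, and here is the verification under the natural reading of points 1--2 for the Cotton tensor $C$ (antisymmetric in its first two arguments, the third argument being the one paired with $\mathcal{H}_i$ via the metric, so that point 2 reads $C(\cdot,\cdot,u_i)=0$). Normalize $q(u_1,u_2)=1$ and let $e$ span the line $E = P^{\perp}$; since $\dim V = 3$, the components of $C$ are spanned by $C(u_1,u_2,\cdot)$, $C(u_1,e,\cdot)$, $C(u_2,e,\cdot)$. Point 2 kills every component whose last slot lies in $P$. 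The cyclic identity applied to $(u_1,u_2,e)$ gives $C(u_1,u_2,e) = -C(u_2,e,u_1) - C(e,u_1,u_2) = 0$ by point 2. Finally, the vanishing of the trace of $C(u_i,\cdot,\cdot)$, computed in the basis $(u_1,u_2,e)$, reads $C(u_i,u_1,u_2) + C(u_i,u_2,u_1) + C(u_i,e,e) = 0$, and the first two terms vanish (antisymmetry and point 2), so $C(u_i,e,e) = 0$. This exhausts all components, hence $C = 0$. Alternatively, $C(u_i,e,e)=0$ follows from point 1 applied to $\mathcal{H}_i$, making trace-freeness dispensable. So the $3$-dimensional case does work, though not quite ``verbatim'' as you claim: there one needs either the trace-free condition or point 1 for \emph{both} hyperplanes to kill the mixed components $C(u_i,e,e)$, whereas in dimension at least $4$ point 1 for a single hyperplane suffices after the propagation step.
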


So, if we had $W_{x_0} \neq 0$, then we would have $T_{x_0}\phi_Y^t \mathcal{H} = \mathcal{H}$ because the properties involved in the previous lemma are conformal. Thus, the derivative $T_{x_0}\phi_Y^t$ would preserve $\mathcal{H} \cap T_{x_0} (H.x_0)$, which is a space-like line in $T_{x_0} (H.x_0)$. It is then immediate to see that this is not possible, proving that $W|_{H.x_0} \equiv 0$.

\vspace*{.2cm}

Finally, the fixed points of $f$ near $x_0$ form a segment $\Delta$, that coincides with the orbit $\{\phi_X^t(x_0)\}$. In particular, $W|_{\Delta} \equiv 0$, and we are in a discrete version of the conformal dynamics exhibited in Cases 1. and 2.b. in Section \ref{sss:dynamics_of_X}. Similarly, we can apply Proposition 4.(i) of \cite{frances_causal_fields} to conclude that a neighborhood of $x_0$ is conformally flat.

\subsection{Fixed points}
\label{ss:fixed_points}
Let $(M,g)$ be a compact Lorentzian manifold with a conformal action of $H \simeq_{\text{loc}} \SL(2,\R)$. We assume here that there exists a point $x_0$ fixed by all elements of $H$, and prove that a neighborhood of $x_0$ is conformally flat. To do this, we will use the following property, that is essentially based on the linearizability of conformal actions of simple Lie groups near a fixed point. Its proof uses similar arguments as in \cite{article3}, Section 3. In Corollary 3.4 of the same article, we observed that necessarily $H \simeq \PSL(2,\R) \simeq \SO_0(1,2)$.

\begin{proposition}
\label{prop:no_fixed_points}
Let $x_0$ be a fixed point of the action. There exists an open neighborhood $W$ of $x_0$ and $W' \subset W$ an open-dense subset such that for all $x \in W$, $\dim H.x = 0$ or $2$ and for all $x \in W'$, $\bar{H.x}$ does not contain fixed points.
\end{proposition}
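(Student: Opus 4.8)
The plan is to reduce everything to the linearised action at $x_0$ and then read off the orbit structure from the adjoint representation of $\SO_0(1,2)$ on $\R^{1,2}$. First I would invoke the linearizability of conformal actions of simple Lie groups near a fixed point (as in \cite{frances_localdynamics} and \cite{article3}, Section 3): there is a neighborhood $W$ of $x_0$ and an $H$-equivariant identification with a neighborhood of the origin in $T_{x_0}M$, conjugating the $H$-action to its isotropy representation $\rho : H \to \GL(T_{x_0}M)$. Since $H$ is simple it has no nontrivial characters, so $\rho$ takes values in $\O(T_{x_0}M, g_{x_0}) \simeq \O(1,n-1)$. Decomposing this orthogonal representation into irreducibles and using Lemma \ref{lem:sl2_irred_orthogonal} together with the Lorentzian signature constraint (only one negative direction is available, so no nontrivial irreducible other than the $3$-dimensional one of signature $(1,2)$ can appear, and no hyperbolic pairs of nontrivial summands fit), the only nontrivial summand is the adjoint representation; all other summands are trivial and Euclidean. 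Hence $T_{x_0}M = \R^{1,2} \oplus \R^{n-3}$ with $\rho = \Ad \oplus \id$, which in particular recovers $H \simeq \SO_0(1,2)$ (Corollary 3.4 of \cite{article3}).

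The orbit of $(w,u)$ is then $(\SO_0(1,2)\cdot w)\times\{u\}$, so $\dim H.x = \dim(\SO_0(1,2)\cdot w) \in \{0,2\}$, with dimension $0$ exactly on the fixed locus $\{w=0\}$; this gives the first assertion. Moreover the causal type of $w$ (timelike, spacelike, or null) is $H$-invariant, hence constant along each orbit, and is detected intrinsically by the induced conformal structure on the orbit: by the stabilizer analysis of Section \ref{sss:tangential_information}, an elliptic, hyperbolic, resp. parabolic isotropy forces a Riemannian, Lorentzian, resp. degenerate induced conformal class on $H.x$. I would therefore set $W' = \{x \in W : \text{the induced conformal structure on } H.x \text{ is non-degenerate}\}$, which in the chart is $\{\langle w,w\rangle \neq 0\}$; its complement is the nowhere-dense set $(\text{nullcone})\times\R^{n-3}$, so $W'$ is open and dense.

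It remains to prove that for $x \in W'$ the closure $\bar{H.x}$ contains no fixed point, and this is the step I expect to be the main obstacle, since the conjugacy is only local whereas $\bar{H.x}$ is a global object. The clean model statement is that a non-degenerate orbit cannot accumulate on a fixed point. Near any fixed point $p$ the same linearisation applies, and in the model the orbits meeting every neighborhood of the fixed locus $\{w=0\}$ are exactly the null ones, since a hyperboloid $\{\langle w,w\rangle = c\}$ with $c\neq 0$ stays at distance $\ge \sqrt{|c|}$ from the origin. The genuine difficulty is that the global orbit might re-enter the chart in infinitely many components sitting on levels $c_n \to 0$; excluding this is the crux. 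I would resolve it by exploiting that the induced metric on a non-degenerate $2$-dimensional orbit is a complete homogeneous metric of constant curvature (a scaling of the hyperbolic plane or of $\mathbf{dS}_2$), so its curvature is an intrinsic constant of the orbit, whereas the model metric on the level $\{\langle w,w\rangle = c_n\}$ has curvature of order $1/c_n \to \infty$; since the ambient metric is $C^2$-close to the flat model near $p$, this yields a contradiction. Equivalently, one argues dynamically with a hyperbolic one-parameter subgroup $A<H$: on $W'$ the $A$-orbits remain uniformly away from $\mathrm{Fix}(H)$, whereas accumulation at $p$ would produce $A$-trajectories entering arbitrarily small neighborhoods of the fixed locus. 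Either way, the null-type orbits are the only ones whose closure meets $\mathrm{Fix}(H)$, and they are excluded from $W'$, which completes the proof.
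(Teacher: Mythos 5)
Your setup — linearization at the fixed point, decomposition of the isotropy representation via Lemma \ref{lem:sl2_irred_orthogonal} into $\Ad \oplus \mathrm{trivial}$, and the choice $W' = \{q(w) \neq 0\}$ — coincides with the paper's, and you correctly isolate the crux: ruling out that a global non-degenerate orbit re-enters the linearization chart of a fixed point on levels $c_n \to 0$. But your resolution of that crux does not work. The curvature argument fails because $H$ acts \emph{conformally}, not isometrically: the metric induced by $g$ on an orbit $H.x$ is not $H$-invariant (only its conformal class is), so it is not a homogeneous metric, is not of constant curvature, and its Gauss curvature is not an ``intrinsic constant of the orbit''. In dimension $2$ a conformal class has no local invariants at all, so there is nothing intrinsic to compare. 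Nothing forbids the curvature of the induced metric from blowing up along the orbit as it nears a fixed point — this is exactly what happens along the degenerate orbits, which genuinely do accumulate on the fixed locus — so no contradiction arises. Moreover, the linearization is a smooth conjugacy of actions, not an almost-isometry, so ``the ambient metric is $C^2$-close to the flat model'' is not available. Your dynamical alternative is circular: the assertion that ``on $W'$ the $A$-orbits remain uniformly away from $\mathrm{Fix}(H)$'' is a form of the very statement to be proven.

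The mechanism the paper uses is group-theoretic, not metric: it extends Lemma 3.7 of \cite{article3} to local orbits of type $\dS^2$. That lemma says that if $x_1$ is \emph{any} fixed point with linearization neighborhood $W_1$, and $y \in W_1$ lies on a local orbit of type $\H^2$ or $\dS^2$, then the \emph{global} orbit $H.y$ meets $W_1$ in exactly that single local orbit. The input is the exact determination of stabilizers — $H_y$ is precisely a conjugate of $S_e$ or $S_h$, the $\dS^2$ case requiring the extra argument with $h_{\pi}$ and the transversals $\Delta_S(v)$ because $S_h$ has index $2$ in its normalizer — and this rigidity is what excludes re-entry. Once it is in place, the conclusion is immediate: a point $x \in W'$ has stabilizer conjugate to $S_e$ or $S_h$, so any intersection $(H.x) \cap W_1$ is a single locally closed local orbit not containing $x_1$, whence $x_1 \notin \bar{H.x}$. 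To repair your proof you would need to supply precisely this stabilizer-rigidity step (or an equivalent), since the conformal-geometric and dynamical arguments you propose cannot substitute for it.
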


Assume that this proposition is established. Let $x \in W'$. According to Section \ref{s:minimal_subsets}, any minimal $H$-invariant subset $K \subset \bar{H.x} \subset F_{\leq 2}$ is either a compact conical orbit, or a circle. In any event, thanks to Sections \ref{ss:1_dimensional_orbits} and \ref{ss:2_dimensional_orbits}, there exists $x' \in \bar{H.x}$ admitting a conformally flat neighborhood $V$. If $h \in H$ is such that $h.x \in V$, then $h^{-1}V$ is a conformally flat neighborhood of $x$.

This proves that $W'$ is conformally flat, and by continuity of the Weyl-Cotton curvature, all of $W$ is conformally flat. Thus, it is enough to prove Proposition \ref{prop:no_fixed_points} to conclude that a neighborhood of $x_0$ is conformally flat.

\paragraph*{Local orbits near a fixed point.} To do so, we reintroduce the notations of Section 3.3 of \cite{article3}. We fix a basis $(e_1,\ldots,e_n)$ of $T_{x_0}M$ such that $g_{x_0}$ reads $-x_1^2 + x_2^2 + \cdots + x_n^2$ and such that the isotropy representation has the form
\begin{equation*}
A \in \SO_0(1,2) \mapsto	
\begin{pmatrix}
A & \\
 & \id
\end{pmatrix}
\in \SO_0(1,n-1).
\end{equation*}
Let $E$ denote $\Span(e_1,e_2,e_3)$. By the linearizability of conformal actions of simple Lie groups fixing a point, there exists $\mathcal{U} \subset \mathcal{U}' \subset E$ and $\mathcal{V} \subset E^{\perp}$ neighborhoods of the origin, a neighborhood $W$ of $x_0$ in $M$, a neighborhood $V_H \subset H$ of the identity and a diffeomorphism $\psi : \mathcal{U}' \times \mathcal{V} \rightarrow W \subset M$ such that $\psi(0,0) = x_0$, $\forall h \in V_H$, $\rho_{x_0}(h) ( \mathcal{U} \times \mathcal{V}) \subset \mathcal{U}' \times \mathcal{V}$ and
\begin{equation}
\label{equ:linearizability}
\forall (u,v) \in \mathcal{U} \times \mathcal{V}, \psi(\rho_{x_0}(h)(u,v)) = h. \psi(u,v).
\end{equation}
Reducing the open sets if necessary, we assume that $\mathcal{U}$, $\mathcal{U}'$ (resp. $\mathcal{V}$) are open balls in $E$ (resp. $E^{\perp}$) with respect to $x_1^2+x_2^2+x_3^2$ (resp. $x_4^2+\cdots+x_n^2$). Note $q = -x_1^2 + x_2^2 + x_3^2$ the quadratic form induced by $g_{x_0}$ on $E$.

\vspace*{.2cm}

We claim that it is enough to set
\begin{equation*}
W' = \psi((\mathcal{U} \cap \{q \neq 0\}) \times \mathcal{V} ),
\end{equation*}
\textit{i.e.} the union of all local $H$-orbits of type $\H^2$ and $\dS^2$, with the terminology of \cite{article3}, Section 3.3. The point is that Lemma 3.7 of the same paper is in fact valid for any local orbits, not just local $H$-orbits of type $\H^2$. Let us explain how it can be adapted to local orbits of type $\dS^2$. The minor difference is that contrarily to $S_e$, $S_h$ has index $2$ in its normalizer in $\SO_0(1,2)$. If note
\begin{equation*}
h_{\theta} =
\begin{pmatrix}
1 & \\
 & R_{\theta}
\end{pmatrix}
\in \SO_0(1,2),
\end{equation*}
where $R_{\theta}$ denotes the rotation of angle $\theta$ in $\Span(e_2,e_3)$, then the normalizer $N_H(S_h)$ is spanned by $h_{\pi}$ and $S_h$. We reuse the notation
\begin{equation*}
\forall v \in \mathcal{V}, \ \Delta_S(v) = \{\psi(se_3,v) , \ s \in ]0,\epsilon[\},
\end{equation*}
where $\epsilon$ denotes the radius of the ball $\mathcal{U} \subset E$. Every local $H$-orbit of type $\dS^2$ in $W$ meets a unique $\Delta_S(v)$ at a unique point. For all $s$ and $v$, the circle $\{\rho_{x_0}(h_{\theta})(se_3,v), \ \theta \in \R\}$ is included in $\mathcal{U} \times \mathcal{V}$. So, property (\ref{equ:linearizability}) above ensures that for all $\theta$, $h_{\theta} \psi(se_3,v) = \psi(s h_{\theta}e_3,v)$. In particular, $h_{\pi}$ does not fix any point $x \in \Delta_S(v)$, proving that $H_x = S_h$. The proof of Lemma 3.7 of \cite{article3} is now directly adaptable do local orbits of type $\dS^2$.

\vspace*{.2cm}

Let $x \in W'$ and let $x_1$ be a fixed point. Of course, the local description of the action of $H$ that we have made above is valid in the neighborhood of $x_1$. Let $W_1$ denote an analogous neighborhood and assume that $(H.x) \cap W_1 \neq \emptyset$. If $y$ is a point in this intersection, then its stabilizer is conjugate either to $S_e$ or $S_h$. It implies that $y$ belongs to a local orbit of type $\H^2$ or $\dS^2$ in $W_1$. By Lemma 3.7 of \cite{article3}, we get that $(H.y) \cap W_1$ is reduced to the local $H$-orbit of $W_1$ containing $y$. Since this local $H$-orbit is a locally closed submanifold of $W_1$, which does not contain $x_1$, we necessarily have $x_1 \notin \bar{H.x}$. This finishes the proof of Proposition \ref{prop:no_fixed_points}.

\section{Extending conformal flatness everywhere}
\label{s:extending_flatness}
Let $H$ be a Lie group locally isomorphic to $\SL(2,\R)$ acting conformally and essentially on a compact Lorentzian manifold $(M,g)$. We still note $F_{\leq 2}$ the compact, $H$-invariant subset of $M$ where the $H$-orbits have dimension at most $2$. We have seen that any minimal closed $H$-invariant subset of $F_{\leq 2}$ admits a conformally flat neighborhood. It is in fact immediate that all of $F_{\leq 2}$ is contained in a conformally flat open subset: if $x \in F_{\leq 2}$, then $\bar{H.x} \subset F_{\leq 2}$ and contains a minimal $H$-invariant subset $K_x$. If $V$ is a conformally flat neighborhood of $K_x$, there is $h \in H$ such that $h.x \in V$, and $h^{-1}V$ is a conformally flat neighborhood of $x$.

\subsection{Orbits whose closure meets $F_{\leq 2}$}

We are now going to refine this observation. Define
\begin{equation*}
U = \{x \in M \ | \ \bar{H.x} \cap F_{\leq 2} \neq \emptyset \}.
\end{equation*}
\begin{lemma}
$U$ is an open, conformally flat neighborhood of $F_{\leq 2}$.
\end{lemma}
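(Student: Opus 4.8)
The inclusion $F_{\leq 2} \subseteq U$ is immediate: for $x \in F_{\leq 2}$ one has $\overline{H.x} \subseteq F_{\leq 2}$, so $\overline{H.x} \cap F_{\leq 2} = \overline{H.x} \neq \emptyset$. Note also that $U$ is $H$-invariant, since $\overline{H.(g.x)} = \overline{H.x}$ for every $g \in H$. The plan is to prove simultaneously that $U$ is open and conformally flat by a single ``flowing'' argument, once the conformally flat neighborhoods of the minimal sets of $F_{\leq 2}$ have been upgraded to neighborhoods entirely contained in $U$.

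The first and main step is therefore the following refinement of what precedes: \emph{every minimal $H$-invariant subset $K \subseteq F_{\leq 2}$ admits an open neighborhood $\mathcal{V}_K$ which is conformally flat and satisfies $\mathcal{V}_K \subseteq U$.} Conformal flatness of such a $\mathcal{V}_K$ was already obtained in Sections \ref{ss:1_dimensional_orbits}, \ref{ss:2_dimensional_orbits} and \ref{ss:fixed_points}; what remains is the inclusion $\mathcal{V}_K \subseteq U$, and this is exactly where the contracting nature of the dynamics near $K$ established in those same sections is used. By Proposition \ref{prop:minimal_subsets}, $K$ is a fixed point, a set of $1$-dimensional orbits, or a compact conical orbit, and one treats the three cases separately. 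When $K$ is a fixed point, the neighborhood $W$ produced in Section \ref{ss:fixed_points} consists of orbits of dimension $0$ or $2$, hence $W \subseteq F_{\leq 2} \subseteq U$. When $K$ contains a $1$-dimensional orbit $H.z$, the linearization neighborhood of Section \ref{ss:1_dimensional_orbits} has the property that $\phi_X^{-t}(y) \to y_\infty \in \Delta$ as $t \to +\infty$ for every $y$ in it, where $\Delta \subseteq F_{\leq 2}$ is the segment of fixed points of the flow; since $\phi_X^{-t}(y) \in H.y$, this gives $y_\infty \in \overline{H.y} \cap F_{\leq 2}$, i.e. $y \in U$. When $K$ is a compact conical orbit, Proposition \ref{prop:dynamic_of_f} provides, after replacing $f$ by its inverse, a linearizing chart in which $T_{x_0}f$ has a single neutral direction $e_1$ tangent to $\Delta$ and all other eigenvalues of modulus $<1$; hence $f^k(y) \to y_\infty \in \Delta \subseteq F_{\leq 2}$ for every $y$ in the chart, and again $f^k(y) \in H.y$ yields $y \in U$.

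With this refinement in hand, openness and conformal flatness of $U$ follow at once. Let $x_0 \in U$ and pick $z \in \overline{H.x_0} \cap F_{\leq 2}$. Then $\overline{H.z} \subseteq \overline{H.x_0} \cap F_{\leq 2}$ is a nonempty compact $H$-invariant set, so it contains a minimal set $K \subseteq F_{\leq 2}$, and $K \subseteq \overline{H.x_0}$. Choosing $p \in K$ and using $p \in \overline{H.x_0}$, there is $h \in H$ with $h.x_0 \in \mathcal{V}_K$. On the one hand, $h^{-1}\mathcal{V}_K$ is then a conformally flat open neighborhood of $x_0$ (conformal flatness being preserved by the conformal map $h^{-1}$), so $W_{x_0} = 0$; as $x_0 \in U$ was arbitrary, $U$ is conformally flat. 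On the other hand, by continuity of $y \mapsto h.y$ there is a neighborhood $\mathcal{N}_0$ of $x_0$ with $h.\mathcal{N}_0 \subseteq \mathcal{V}_K \subseteq U$; since $U$ is $H$-invariant this forces $\mathcal{N}_0 \subseteq U$, proving that $U$ is open.

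The main obstacle is precisely the inclusion $\mathcal{V}_K \subseteq U$ of the first step. The naive characterization $x \in U \iff H.x$ meets every neighborhood of $F_{\leq 2}$ only exhibits $U$ as a $G_\delta$ set, so openness cannot be read off formally and genuinely requires the local dynamical information near the minimal sets, namely that a one-parameter subgroup (or the cyclic group generated by $f$) contracts a full neighborhood of $K$ onto a subset of $F_{\leq 2}$. Everything else is the soft transport by a single fixed element $h \in H$, which simultaneously delivers openness and the vanishing of the Weyl-Cotton curvature on $U$.
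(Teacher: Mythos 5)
Your proof is correct and follows essentially the same route as the paper: openness is obtained from the contracting local dynamics near the minimal sets of $F_{\leq 2}$ (the hyperbolic return map near a compact conical orbit, the stable linearized flows near a $1$-dimensional orbit, the local orbit structure near a fixed point), which forces a whole neighborhood of each minimal set into $U$, and then both openness and conformal flatness are transported to an arbitrary point of $U$ by a single element of $H$ via $H$-invariance. The only cosmetic difference is that the paper excludes the fixed-point case by noting that fixed points lie in $\Int(F_{\leq 2})$, whereas you absorb it directly through $W \subseteq F_{\leq 2} \subseteq U$; both rest on the same Proposition \ref{prop:no_fixed_points}.
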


\begin{proof}
By considering a minimal $H$-invariant subset in $\bar{H.x} \cap F_{\leq 2}$, the same argument as above immediately gives that any point in $U$ admits a conformally flat neighborhood. The important point here is that $U$ is open. We denote by $\Int(F_{\leq 2})$ the interior of $F_{\leq 2}$.

\vspace*{.2cm}

Let $x \in U \setminus \Int (F_{\leq 2})$. The closed $H$-invariant subset $\bar{H.x} \cap F_{\leq 2}$ is non-empty. By Proposition \ref{prop:minimal_subsets}, it must contain an orbit $H.x_0$ that is either a compact-conical orbit, a $1$-dimensional orbit or a fixed point of $H$. Since the interior of $F_{\leq 2}$ is $H$-invariant, we have $x_0 \in \partial F_{\leq 2}$. By Proposition \ref{prop:no_fixed_points}, in the neighborhood of any fixed point, every $H$-orbit is either another fixed point or a $2$-dimensional orbit. So, the set of fixed points is included in $\Int(F_{\leq 2})$, proving that the $H$-orbit of the point $x_0$ is either compact-conical or a $1$-dimensional orbit. By Sections \ref{ss:2_dimensional_orbits} and \ref{ss:1_dimensional_orbits}, we know that there is $X \in \h$ hyperbolic such that:
\begin{itemize}
\item Either $\{\phi_X^t(x_0), \ t \in \R \}$ is a non-singular periodic orbit of $X$ and if $t_0 > 0$ is such that $\phi_X^{t_0}(x_0) = x_0$, then $\phi_X^{t_0}$ is linearizable near $x_0$ and conjugate to
\begin{equation*}
\begin{pmatrix}
1 & & & & \\
  & \lambda & & & \\
  & & \ddots & & \\
  & & & \lambda & \\
  & & & & \lambda^2
\end{pmatrix}
\begin{pmatrix}
1 & & & & \\
 & & & & \\
 & & R & & \\
 & & & & \\
 & & & & 1
\end{pmatrix}
\end{equation*}
where $\lambda \in ]0,1[$ and $R$ is a rotation matrix. The fixed points of $\phi_X^{t_0}$ in the linearization neighborhood coincide with the circle $\Delta = \{\phi_X^t(x_0), \ t \in \R \}$. In particular, we have $\Delta \subset F_{\leq 2}$ since it is contained in $H.x_0$ and $\dim H.x_0 = 2$ ;

\item Or $X(x_0) = 0$ and $\phi_X^t$ is linearizable near $x_0$ and is conjugate to one of the following linear flows:
\begin{equation*}
\begin{pmatrix}
1 & & & & \\
  & e^{-t} & & & \\
  & & \ddots & & \\
  & & & e^{-t} & \\
  & & & & e^{-2t}
\end{pmatrix}
, \
\begin{pmatrix}
e^{-t} & & \\
 & \ddots & \\
 & & e^{-t}
\end{pmatrix}
, \
\begin{pmatrix}
e^{-t} & & & & \\
  & e^{-2t} & & & \\
  & & \ddots & & \\
  & & & e^{-2t} & \\
  & & & & e^{-3t}
\end{pmatrix}
.
\end{equation*}
\end{itemize}

In the first situation, if $y$ is a point in the linearization neighborhood of $x_0$, then $(\phi_X^{nt_0}(y))\xrightarrow[n \to \infty]{} y_{\infty} \in \Delta \subset F_{\leq 2}$, proving that this neighborhood of $x_0$ is included in $U$.

In the second situation, when $t \to + \infty$, either $\phi_X^t(y) \rightarrow x_0$ for any $y$ in the linearization neighborhood, or $\phi_X^t(y) \rightarrow y_{\infty} \in \Delta'$, where $\Delta'$ denotes the zero-set of $X$. Of course, $\Delta' \subset F_{\leq 2}$, proving $y \in U$.

\vspace*{.2cm}

Thus, in any case, the point $x_0$ is in the interior of $U$. Since $U$ is $H$-invariant, we also have $x \in \Int(U)$. Finally, $U \setminus \Int(F_{\leq 2}) \subset \Int(U)$, proving that $U = \Int(U)$.
\end{proof}

Our aim is to prove that $U = M$. So, until the end of this section, we assume that $U \neq M$, and consider $K := \partial U$. Since $U$ is $H$-invariant, $K$ is non-empty, compact and $H$-invariant. We are going to prove that the dynamics of $H$ must be stable near $K$. This will be in contradiction with the fact that there are points in $U$ arbitrarily close to $K$.

\subsection{Stability of $H$-orbits in a neighborhood of $K$}

Since $U$ is open and $F_{\leq 2} \subset U$, we have $K \cap F_{\leq 2} = \emptyset$, \textit{i.e.} $H$-acts locally freely in a neighborhood of $K$. This observation implies that for any hyperbolic $X \in \h$, the corresponding conformal vector field is space-like in a neighborhood of $K$, as the following lemma shows.

\begin{lemma}
\label{lem:Xspace_like}
Let $(M,g)$ be a Lorentzian manifold on which $H$ acts conformally. Let $K \subset M$ be a compact subset such that $H$ acts locally freely on $K$, i.e. $\h_x = 0$ for all $x \in K$. Assume that there is an hyperbolic element $X \in \h \simeq \sl(2,\R)$ whose flow preserves $K$. Then, $X$ is space-like in a neighborhood of $K$.
\end{lemma}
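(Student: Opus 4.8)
The plan is to read off the causal type of $X$ from the dynamics of its flow on the compact invariant set $K$, using the full $\sl(2)$-structure rather than $X$ alone. First I would complete $X$ to an $\sl(2)$-triple $(X,Y,Z)$ of $\h$, with $[X,Y]=Y$ and $[X,Z]=-Z$; after rescaling $X$ — which changes neither the orbits of $\phi_X^t$, nor the invariance of $K$, nor the sign of $g(X,X)$ — this is harmless. The fields $Y,Z$ are globally defined conformal fields on $M$, and the hypothesis that $H$ acts locally freely on $K$ means precisely that $X,Y,Z$ are linearly independent at every $x\in K$, so they frame the tangent space of each (three-dimensional) orbit. The engine of the proof is the covariance of the flow, $Y_{\phi_X^t(x)}=\e^{t}(\phi_X^t)_*Y_x$ and $Z_{\phi_X^t(x)}=\e^{-t}(\phi_X^t)_*Z_x$, combined with conformality: writing $\lambda=\lambda(x,t)>0$ for the conformal distortion of $\phi_X^t$, one obtains the transformation law for the Gram matrix of $(X,Y,Z)$,
\begin{equation*}
g_{\phi_X^t(x)}(X,X)=\lambda\, g_x(X,X),\quad g_{\phi_X^t(x)}(Y,Y)=\e^{2t}\lambda\, g_x(Y,Y),\quad g_{\phi_X^t(x)}(Z,Z)=\e^{-2t}\lambda\, g_x(Z,Z),
\end{equation*}
together with $g(Y,Z)\mapsto\lambda\, g(Y,Z)$, $g(X,Y)\mapsto\e^{t}\lambda\, g(X,Y)$ and $g(X,Z)\mapsto\e^{-t}\lambda\, g(X,Z)$. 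In particular the sign of $g(X,X)$ is constant along each $\phi_X^t$-orbit.

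My main step is to prove that every point of $K$ that is recurrent for $\phi_X^t$ is space-like for $X$. Since $K$ is compact and $\R$ is amenable, $\phi_X^t$ preserves a probability measure supported in $K$, so by the Poincaré recurrence theorem recurrent points exist. Fix such a point $x$ and a sequence $t_n\to+\infty$ with $\phi_X^{t_n}(x)\to x$; then every Gram entry $g_{\phi_X^{t_n}(x)}(\cdot,\cdot)$ converges to the corresponding entry at $x$. Passing to a subsequence so that $\lambda(x,t_n)\to a\in[0,+\infty]$, the transformation law forces several Gram entries at $x$ to vanish, since an exponentially growing prefactor multiplying a nonzero entry would contradict convergence. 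Reading off which entries survive, the result must be compatible with the Lorentzian signature, and here two elementary obstructions do all the work: a Lorentzian space contains no two-dimensional totally isotropic subspace, and no two-dimensional negative-definite subspace. Together with the linear independence of $X,Y,Z$, these force $g_x(Y,Z)\neq0$, the vanishing of $g_x(Y,Y),g_x(Z,Z),g_x(X,Y),g_x(X,Z)$, and finally $g_x(X,X)>0$.

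The delicate point — the main obstacle — is the light-like case $g_x(X,X)=0$, in which the relation $g_{\phi_X^{t_n}(x)}(X,X)=\lambda(x,t_n)g_x(X,X)$ gives no control on the limit $a$ (whereas $g_x(X,X)\neq0$ immediately yields $a=1$). I would dispose of it by a short case analysis over $a\in\{0\}\cup(0,\infty)\cup\{\infty\}$: in each regime enough Gram entries are forced to zero that either $\Span(X,Y)$, $\Span(X,Z)$ or $\Span(Y,Z)$ becomes a two-dimensional totally isotropic plane, or else $X$ becomes a nonzero null vector orthogonal to a Lorentzian plane and hence lying in a positive-definite subspace — all impossible. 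Thus recurrence forces $g_x(X,X)>0$. To finish, I observe that $\{x\in K:\ g_x(X,X)\leq0\}$ is closed, hence compact, and $\phi_X^t$-invariant by the sign-invariance above; were it nonempty it would carry an invariant measure and therefore contain a recurrent point with $g(X,X)\leq0$, contradicting the main step. Hence $g(X,X)>0$ everywhere on $K$, and by continuity of $x\mapsto g_x(X,X)$ and compactness of $K$ this positivity persists on an open neighborhood of $K$, which is exactly the assertion.
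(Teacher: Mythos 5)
Your proof is correct, but it follows a genuinely different route from the paper's. The paper applies its Proposition \ref{prop:tangential_information} (a consequence of the Bader--Frances--Melnick embedding theorem for Cartan geometries, resting on amenability of the one-parameter group) to the compact $A^+$-invariant set $K \cap \{g(X,X)\leq 0\}$: this produces a point $x_0$ at which $\Ad_{\h}(e^{tX})$ is \emph{exactly} conformal for the induced form $q_{x_0}$ on $\h$, and then an algebraic fact quoted from \cite{article3} (Lemma 2.3 there) says that any such $\Ad(e^{tX})$-conformally invariant sub-Lorentzian form on $\sl(2,\R)$ forces $X$ to be space-like --- contradiction. You replace both black boxes by elementary ingredients: Krylov--Bogolyubov plus Poincar\'e recurrence produce a recurrent point in the same bad set, and at a recurrent point the Gram-matrix transformation laws yield the signature constraints only \emph{asymptotically}, which you then close up using the two Lorentzian obstructions (no totally isotropic $2$-plane; a nonzero vector orthogonal to a Lorentzian plane is space-like). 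I checked your case analysis over $a\in\{0\}\cup(0,\infty)\cup\{\infty\}$: in the regimes $a=0$ and $a=\infty$ the entries with decaying (resp.\ growing) prefactors vanish and give the totally isotropic planes $\Span(X,Z)$ resp.\ $\Span(X,Y)$; for $a\in(0,\infty)$ one needs the extra observation that the prefactor-$\lambda$ entries satisfy $aG=G$, so either $g_x(Y,Z)=0$ (totally isotropic plane again) or $a=1$ and $X$ is orthogonal to the Lorentzian plane $\Span(Y,Z)$, forcing $g_x(X,X)>0$; your write-up covers exactly these alternatives, so the gap-free version goes through. What each approach buys: the paper's is shorter on the page because Proposition \ref{prop:tangential_information} is already available and reused elsewhere, and the exact invariance at the special point makes the algebra cleaner; yours is self-contained --- no Cartan bundle, no embedding theorem --- and can be seen as unpacking the same amenability mechanism (an invariant measure) that underlies the cited machinery, with the conformal distortion asymptotics at a recurrent point playing the role of exact conformal invariance.
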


\begin{proof}
Let $A^+:=\{\e^{tX}\}_{t \in \R} < H$ and consider the compact $A^+$-invariant subset 
\begin{equation*}
K \cap \{x \in M \ | \ g_x(X,X) \leq 0\}.
\end{equation*}
Assume that this subset is non-empty. By Proposition \ref{prop:tangential_information}, it must contain a point $x_0$ such that $\Ad_{\h}(A^+) \subset \Conf(\h,q_{x_0})$.  We note $a_t := \Ad_{\h}(e^{tX})$. Since $a_t$ is linear and conformal with respect to $q_{x_0}$, there exists $\lambda \in \R$ such that $a_t^* q_{x_0} = \e^{\lambda t} q_{x_0}$. Since $X$ is hyperbolic, there exists $Y$ and $Z$ such that $a_t(Y) = e^t Y$ and $a_t(Z) = e^{-t}Z$. We now use the following observation, which was proved in \cite{article3}, Lemma 2.3.
\begin{fact*}
Let $q$ be an $\Ad(e^{tX})$-conformally invariant sub-Lorentzian quadratic form on $\h$. Then, $q$ is Lorentzian, $X$ is space-like and orthogonal to $Y$ and $Z$, which are both light-like.
\end{fact*}
Thus, we get that $g_{x_0}(X,X) > 0$, contradicting $x_0 \in K \cap \{x \in M \ | \ g_x(X,X) \leq 0\}$. Hence, $X$ is space-like on $K$, and necessarily this is true in a neighborhood of $K$.
\end{proof}

Let us fix $(X,Y,Z)$ an $\sl(2)$-triple in $\h$. By Lemma \ref{lem:Xspace_like}, we know that $X$ must be space-like in a neighborhood of $K$. If we note $V = \{x \in M \ | \ g_x(X,X) > 0\}$, let $g_0$ denote the metric $g/g(X,X)$ on $V$. By compactness of $K \subset V$, the functions $g_0(Y,Y)$, $g_0(Z,Z)$, $g_0(Y,X)$ and $g_0(Z,X)$ are bounded over $K$. Therefore, for any $x \in K$, $Y_x$ and $Z_x$ are isotropic and orthogonal to $X_x$ (see the proof of Lemma \ref{lem:couple_inessential}). So, for all $x \in K$, the subspace $\Span(X_x,Y_x,Z_x)$ is Lorentzian.
\vspace*{.2cm}

Hence, $H$ acts locally freely with Lorentzian orbits in a neighborhood of $K$. So, let us define the open set
\begin{equation*}
\Omega = \{x \in M \ | \ \dim(H.x) = 3, \ H.x \text{ Lorentzian}, \ X_x \text{ space-like}\}.
\end{equation*}
We have proved that $K \subset \Omega$. Remark that $\Omega$ is \textit{a priori} only $\phi_X^t$-invariant.

\vspace*{.2cm}

Let us consider the Lorentzian manifold $(\Omega,g)$. This manifold is endowed with an $\sl(2)$-triple $(X,Y,Z)$ of conformal vector fields, everywhere linearly independent, with $\Span(X,Y,Z)$ Lorentzian and such that $X$ is space-like and complete. To simplify notations, we assume that $g$ has been renormalized by $g(X,X) >0$, so that $\phi_X^t \in \Isom(\Omega,g)$ by Lemma  \ref{lem:centralizer_inessential}. Define $\N$ to be the distribution in $\Omega$ orthogonal to $\Span(X,Y,Z)$. It has codimension $3$, is $\phi_X^t$-invariant, and for all $x \in \Omega$, $\N_x$ is a Riemannian subspace of $T_x\Omega$. 

\vspace*{.2cm}

Finally, we define for small enough $\epsilon >0$
\begin{equation*}
K_{\epsilon} = \{\exp_x(v), \ x \in K, \ v \in \N_x, \ g_x(v,v) \leq \epsilon \}.
\end{equation*}
(The notation $\exp$ refers to the exponential map of the metric $g$.)

\begin{lemma}
If $\epsilon$ is small enough, $K_{\epsilon}$ is a (well-defined) $\phi_X^t$-invariant neighborhood of $K$, and for any neighborhood $V$ of $K$, there is $\epsilon >0$ such that $K_{\epsilon} \subset V$.
\end{lemma}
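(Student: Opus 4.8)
The plan is to verify in turn the three assertions hidden in the statement---that $K_\epsilon$ is well-defined, that it is $\phi_X^t$-invariant, and that it is a neighborhood of $K$ shrinking down to $K$---of which only the neighborhood property requires real geometric input. For well-definedness, recall that $K$ is compact and contained in the open set $\Omega$, on which the renormalized metric $g$ (for which $\phi_X^t$ is an isometry) is smooth; the exponential map is thus defined on an open neighborhood of the zero section of $TM|_K$, and by compactness there is $\epsilon_\ast>0$ such that $\exp_x(v)$ is defined whenever $x\in K$, $v\in\N_x$ and $g_x(v,v)\leq\epsilon_\ast$. For invariance, I would use that $\phi_X^t$ preserves $K$ (since $K$ is $H$-invariant, hence $A^+$-invariant) and is an isometry. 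From $(\phi_X^t)_*X=X$, $(\phi_X^t)_*Y_x=e^{-t}Y_{\phi_X^t(x)}$ and $(\phi_X^t)_*Z_x=e^{t}Z_{\phi_X^t(x)}$, the differential $d\phi_X^t$ carries $\Span(X_x,Y_x,Z_x)$ onto $\Span(X_{\phi_X^t(x)},Y_{\phi_X^t(x)},Z_{\phi_X^t(x)})$, so being a $g$-isometry it maps $\N_x$ isometrically onto $\N_{\phi_X^t(x)}$. Since isometries send geodesics to geodesics, $\phi_X^t(\exp_x(v))=\exp_{\phi_X^t(x)}(d\phi_X^t v)$, and as $\phi_X^t(x)\in K$ and $g_{\phi_X^t(x)}(d\phi_X^t v,d\phi_X^t v)=g_x(v,v)\leq\epsilon$, this gives $\phi_X^t(K_\epsilon)=K_\epsilon$.

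The delicate point is that $K=\partial U$ is \emph{not} a submanifold, so one cannot directly invoke the tubular neighborhood theorem for it; instead I would argue leaf by leaf, exploiting that $K$ is saturated by $3$-dimensional orbits. Fix $x_0\in K$; since $K$ is $H$-invariant, the whole orbit $H.x_0$ lies in $K$. Choose a small embedded plaque $L_0\ni x_0$ of this orbit and form the $n$-dimensional manifold $\nu(L_0)=\{(x,v):x\in L_0,\ v\in\N_x\}$. The map $\Xi(x,v)=\exp_x(v)$ has invertible differential at $(x_0,0)$: it restricts to the identity on $T_{x_0}L_0=T_{x_0}(H.x_0)$ and to the identity on $\N_{x_0}$, whose orthogonal sum is $T_{x_0}M$. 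Hence $\Xi$ is a local diffeomorphism near $(x_0,0)$ and its image is a neighborhood $\mathcal{T}_{x_0}$ of $x_0$; shrinking $\mathcal{T}_{x_0}$, every point of it is of the form $\exp_x(v)$ with $x\in L_0\subset K$ and $g_x(v,v)<\epsilon$, so $\mathcal{T}_{x_0}\subset K_\epsilon$. Since this works for each $x_0\in K$ and for every sufficiently small $\epsilon$, the open set $\bigcup_{x_0\in K}\mathcal{T}_{x_0}$ contains $K$ and lies in $K_\epsilon$, proving that $K_\epsilon$ is a neighborhood of $K$.

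Finally, for the shrinking statement I would argue by contradiction: if some neighborhood $V\supset K$ admitted points $y_n\in K_{\epsilon_n}\setminus V$ with $\epsilon_n\to 0$, write $y_n=\exp_{x_n}(v_n)$ with $x_n\in K$, $v_n\in\N_{x_n}$ and $g_{x_n}(v_n,v_n)\leq\epsilon_n$. As $\N$ is Riemannian and $K$ compact, $g|_{\N}$ is uniformly positive definite over $K$, so $v_n\to 0$ in any fixed background norm; passing to a subsequence with $x_n\to x_\infty\in K$, continuity of $\exp$ forces $y_n\to\exp_{x_\infty}(0)=x_\infty\in K\subset V$, contradicting $y_n\notin V$. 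The only real obstacle in the whole argument is the non-manifold nature of $K$, which is circumvented precisely by the fact that $K$ is a union of full $3$-dimensional orbits, allowing the leafwise tubular-neighborhood construction above; everything else is a routine consequence of compactness and of $\phi_X^t$ being an isometry preserving the orbit foliation.
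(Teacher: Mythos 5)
Your proof is correct and follows essentially the same route as the paper: compactness comparisons with a background Riemannian metric give well-definedness and the shrinking property, and a tubular-neighborhood (local diffeomorphism) argument for $(x,v)\mapsto\exp_x(v)$ along a plaque of the $3$-dimensional orbit gives the neighborhood property. If anything you are more explicit than the paper, which leaves the $\phi_X^t$-invariance implicit and works in a chart where the normal spaces along the plaque are treated as a fixed subspace $E_0^{\perp}$, whereas you keep the varying normal bundle $\nu(L_0)$ intact.
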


\begin{proof}
Let $h$ be some auxiliary Riemannian metric on $\Omega$. We note $T^1 \Omega$ the unit tangent bundle with respect to $h$. By compactness of $K$, there exists $\alpha>0$ such that
\begin{equation*}
\forall x \in K, \ \forall v \in \N_x, \ g_x(v,v) \geq \alpha h_x(v,v).
\end{equation*}
On can take $\alpha$ to be the infimum of $g_x(v,v)$ over the compact subset $(T^1 \Omega \cap \N)|_K$ of $T\Omega$.

By compactness of $K$, there is $\eta_0 > 0$ such that if $x \in K$ and $v \in T_x \Omega$ is such that $h_x(v,v) \leq \eta_0$, then $v$ is in the injectivity domain of $\exp_x$. Thus, $K_{\epsilon}$ is well-defined as soon as $\epsilon \leq \alpha \eta_0$. If $\eta \leq \eta_0$, let $V_{\eta}$ denote the exponential neighborhood $V_{\eta} = \{ \exp_x(v), \ x \in K, \ v \in T_x \Omega, \ h_x(v,v) \leq \eta\}$. By continuity of the exponential map of $g$ and compactness of $K$, for any neighborhood $V$ of $K$, there is $\eta$ such that $V_{\eta} \subset V$, implying that $K_{\alpha \eta} \subset V$.

\vspace*{.2cm}

We are left to prove that $K_{\epsilon}$ is a neighborhood of $K$. Let $x \in K$. We know that $H.x$ is an immersed $3$-dimensional Lorentzian submanifold of $(M,g)$, and that $H.x \subset K$. Choose $U \subset M$ an open neighborhood of $x$, $\psi : U \rightarrow U_0 \subset \R^n$ a local chart at $x$, and $V \subset H$ a neighborhood of the identity such that $\psi$ maps diffeomorphically $V.x$ onto an open ball $B_0 \subset E_0$ where $E_0$ is a $3$-dimensional subspace of $\R^n$. We note $g_0$ the push-forward by $\psi$ of the metric $g$ on $U_0$. Immediately, $B_0$ is a Lorentzian submanifold of $U_0$ and we note $\N^0$ the push-forward by $\psi$ of the Riemannian distribution $\N$.

Note $x_0 = \psi(x)$. If $\mathcal{V} \subset E_0^{\perp}$ is a small enough neighborhood of the origin, consider the differentiable map
\begin{align*}
\varphi : & \: B_0 \times \mathcal{V}  \rightarrow U_0 \\
          & \quad (y_0,v) \mapsto \exp_{y_0}(v),
\end{align*}
where the notation $\exp$ refers to exponential map of the metric $g_0$. Remark that for any $y_0 \in B_0$, we have $\mathcal{N}_{y_0}^0 = E_0^{\perp}$. It is then immediate that $T_{(x_0,0)} \varphi$ is inversible, so that $\varphi$ is a local diffeomorphism at $(x_0,0)$. So, there is an open neighborhood $U_0'$ of $x_0$ that is contained in the image of $\varphi$. By construction, this means that $\psi^{-1}(U_0') \subset K_{\epsilon}$, proving that $K_{\epsilon}$ is a neighborhood of $x$, for any $x \in K$.
\end{proof}

The last ingredient leading to a contradiction is the following fact.

\begin{lemma}
The action of $H$ preserves $g$ in a neighborhood of $K$.
\end{lemma}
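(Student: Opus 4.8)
The plan is to reduce the statement to showing that the conformal distortions of $Y$ and $Z$ vanish on the compact, $\phi_X^t$-invariant neighborhood $K_{\epsilon}$ of $K$ constructed above. We work with the renormalized metric, so that $g(X,X) \equiv 1$ and $\mathcal{L}_X g = 0$ on $\Omega$ (Lemma \ref{lem:centralizer_inessential}). Since $Y$ and $Z$ lie in $\h$, they are conformal vector fields of $g$, so we may write $\mathcal{L}_Y g = \nu_Y\, g$ and $\mathcal{L}_Z g = \nu_Z\, g$ for smooth functions $\nu_Y, \nu_Z$ on $\Omega$. The point is that $Y$ (resp. $Z$) is a Killing field of $g$ on $K_{\epsilon}$ exactly when $\nu_Y$ (resp. $\nu_Z$) vanishes there; once $X, Y, Z$ are all Killing on $K_{\epsilon}$, the entire Lie algebra $\h = \Span(X,Y,Z)$ consists of Killing fields on this neighborhood of $K$, which is the assertion.

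The key computation uses only the $\sl(2)$-relations and the fact that $X$ is Killing. From the identity $[\mathcal{L}_X, \mathcal{L}_Y] = \mathcal{L}_{[X,Y]}$, the relation $[X,Y] = Y$, and $\mathcal{L}_X g = 0$, we obtain $\mathcal{L}_X(\nu_Y g) = \mathcal{L}_X \mathcal{L}_Y g = \mathcal{L}_{[X,Y]} g + \mathcal{L}_Y \mathcal{L}_X g = \mathcal{L}_Y g = \nu_Y g$. Expanding the left-hand side and using $\mathcal{L}_X g = 0$ gives $(X \cdot \nu_Y)\, g = \nu_Y\, g$, hence $X \cdot \nu_Y = \nu_Y$, that is, $\nu_Y \circ \phi_X^t = e^t \nu_Y$. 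The same computation with $[X,Z] = -Z$ yields $\nu_Z \circ \phi_X^t = e^{-t} \nu_Z$.

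Now compactness closes the argument. Since $K_{\epsilon}$ is $\phi_X^t$-invariant and compact, $\nu_Y$ and $\nu_Z$ are bounded on it. Along any forward orbit $\{\phi_X^t(x)\}_{t\ge 0} \subset K_{\epsilon}$ the relation $\nu_Y(x) = e^{-t}\nu_Y(\phi_X^t(x))$ forces $\nu_Y(x) = 0$ on letting $t \to +\infty$; symmetrically, $\nu_Z(x) = e^{t}\nu_Z(\phi_X^t(x)) \to 0$ as $t \to -\infty$ gives $\nu_Z(x) = 0$. Hence $Y$ and $Z$, together with $X$, are Killing fields of $g$ throughout $K_{\epsilon}$, so $H$ acts isometrically on this neighborhood of $K$.

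The main obstacle — and the reason for routing the argument through the scalar distortions instead of applying Lemma \ref{lem:couple_inessential} directly to the couples $(X,Y)$ and $(X,Z)$ — is that $K_{\epsilon}$ is only $\phi_X^t$-invariant, not invariant under the flows of $Y$ or $Z$. The isometry conclusion of Lemma \ref{lem:couple_inessential} is extracted from the behaviour of $g$ along $\phi_Y^t$-orbits (through $Y$ being light-like and orthogonal to $X$), but these orbits leave the controlled region $K_{\epsilon}$, and moreover $Y$ and $Z$ are not complete on $\Omega$. Encoding the obstruction in the single function $\nu_Y$ and differentiating it along the flow of $X$ alone keeps the whole argument inside $K_{\epsilon}$, where $\phi_X^t$-invariance and compactness are all that is required.
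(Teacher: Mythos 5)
Your proof is correct, but it proceeds differently from the paper. The paper simply applies its Lemma \ref{lem:couple_inessential} to the couples $(X,Y)$ and $(X,Z)$: boundedness of $g_0(Y,Y)$, $g_0(X,Y)$, $g_0(Z,Z)$, $g_0(X,Z)$ along $\phi_X^t$-orbits in the compact invariant set $K_\epsilon$ first forces $Y$ and $Z$ to be light-like and orthogonal to $X$ (via the pushforward scaling $(\phi_X^t)_*Y = e^{-t}Y_{\phi_X^t}$), and then the triviality of the conformal distortion of $\phi_Y^t$, $\phi_Z^t$ is read off from $g(X,X)\equiv 1$ using $(\phi_Y^t)_*X = X + tY$. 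You instead encode Killing-ness directly in the infinitesimal distortions $\nu_Y,\nu_Z$, derive the ODEs $X\cdot\nu_Y = \nu_Y$ and $X\cdot\nu_Z = -\nu_Z$ from $[\mathcal{L}_X,\mathcal{L}_Y]=\mathcal{L}_{[X,Y]}$ and $\mathcal{L}_Xg=0$, and let compactness of the $\phi_X^t$-invariant set $K_\epsilon$ kill them. Your route is a genuine streamlining: it is a one-step argument that uses only the flow of $X$, and — as you correctly point out — it sidesteps the fact that Lemma \ref{lem:couple_inessential} is stated for \emph{complete} conformal fields, a hypothesis that $Y$ and $Z$ do not obviously satisfy on $\Int(K_\epsilon)$. (The paper's application is nonetheless legitimate, because the relevant steps of that lemma's proof are local/infinitesimal: the degeneracy conclusions only require the full forward and backward $\phi_X^t$-orbits to stay where the bounds hold, which $\phi_X^t$-invariance of $K_\epsilon$ guarantees, and the distortion computation along $\phi_Y^t$ only needs the local flow.) What the paper's route buys is economy — it reuses a lemma already proved and needed elsewhere — and it also reproduces the geometric byproducts ($Y$, $Z$ isotropic and orthogonal to $X$), which your argument does not yield but which the paper had in any case already established on $K$ in the preceding paragraph.
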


\begin{proof}
If $\epsilon$ is small enough, $K_{\epsilon}$ is relatively compact in $\Omega$. Since $K_{\epsilon}$ is $\phi_X^t$-invariant, the  functions $g(X,Y)$, $g(X,Z)$, $g(Z,Z)$ and $g(Y,Y)$ are bounded along the orbits of $\phi_X^t$ in $K_{\epsilon}$. Thus, we can apply Lemma \ref{lem:couple_inessential} to the couples of conformal vector fields $(X,Y)$ and $(X,Z)$ and conclude that $X,Y,Z$ are Killing vector fields of $(\Int(K_{\epsilon}),g)$, where $\Int(K_{\epsilon})$ denotes the interior of $K_{\epsilon}$.
\end{proof}

We can now finish the proof. If $\epsilon >0$ is chosen small enough, $K_{\epsilon}$ is included in the neighborhood of $K$ on which $H$ acts by isometries of $g$. Therefore, all of $H$ preserves these $K_{\epsilon}$'s. On the one hand, we always have $K_{\epsilon} \cap U \neq \emptyset$ since $K = \partial U$. So, by $H$-invariance of $K_{\epsilon}$, we obtain that $\bar{K_{\epsilon}} \cap F_{\leq 2} \neq \emptyset$, by definition of $U$.

But on the other hand, since $K \cap F_{\leq 2} = \emptyset$, these compact subsets can be separated by open neighborhoods. So, there exists a neighborhood $V$ of $F_{\leq 2}$ such that for small enough $\epsilon > 0$, $K_{\epsilon} \cap V = \emptyset$. This is our contradiction.

\subsection{Conclusion}

Finally, $U=M$, \textit{i.e.} for all $x \in M$, $\bar{H.x} \cap F_{\leq 2} \neq \emptyset$. Dynamically, this proves that there does not exist a compact $H$-invariant subset of $M$ in which all orbits are $3$-dimensional, and completes the proof of Proposition \ref{prop:minimal_subsets}.

\vspace*{.2cm}

At a geometrical level, since we already know that $F_{\leq 2}$ is contained in a conformally flat open subset, this proves that $(M,g)$ is conformally flat, and completes the proof of Theorem \ref{thm:main}.

\section*{Appendix}
We give here a justification to the following lemma, used for Corollary \ref{cor:su1k}.

\begin{lemma}
Let $k \geq 2$ and $n \geq 3$. If $f : \su(1,k) \rightarrow \so(2,n)$ is a Lie algebra embedding, then the centralizer in $O(2,n)$ of the image of $f$ is compact.
\end{lemma}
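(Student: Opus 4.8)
The plan is to analyze $V := \R^{2,n}$ as a representation of $\s := f(\su(1,k))$ and to read off the centralizer from an $\s$-invariant orthogonal decomposition. By Weyl's complete reducibility theorem, $V = V_0 \oplus V'$, where $V_0$ is the space of $\s$-fixed vectors and $V'$ is the sum of the nontrivial isotypic components. Writing $Q$ for the ambient form of signature $(2,n)$ and using $\s \subset \so(Q)$, one checks this decomposition is $Q$-orthogonal: for a fixed vector $v$ and any $w$, invariance gives $Q(v,\xi w)=0$ for all $\xi \in \s$, and $\s\cdot W = W$ on every nontrivial irreducible $W$, so $Q(v,W)=Q(v,\s\cdot W)=0$. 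Hence $Q|_{V_0}$ and $Q|_{V'}$ are nondegenerate. Since any element centralizing $\s$ preserves the isotypic components, the centralizer splits as $C = \O(V_0,Q|_{V_0}) \times C'$, with $C' := Z_{\O(V',Q|_{V'})}(\s)$; it remains to show both factors are compact.

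First I would prove that $Q|_{V'}$ carries \emph{both} negative directions, i.e.\ has signature $(2,\cdot)$, which forces $Q|_{V_0}$ to be positive definite and $\O(V_0)$ compact. As $\s$ is simple and noncompact it admits no nonzero homomorphism into a compact $\so(m)$, so $Q|_{V'}$ is indefinite and has one or two negative directions. The step where $k \geq 2$ enters is the exclusion of the Lorentzian case: a signature $(1,q')$ would yield an embedding $\su(1,k)\hookrightarrow\so(1,q')$. This is impossible, because a hyperbolic element of $\so(1,q')$ acts on $\R^{1,q'}$ with eigenvalues $\{\mu,0,-\mu\}$, so its adjoint action has a \emph{single} nonzero magnitude $\mu$ (the restricted root system of $\so(1,q')$ is of type $A_1$, with no root $2\alpha$); whereas the generator $X_0$ of the Cartan subspace of $\su(1,k)$ has $\ad(X_0)$-eigenvalues $0,\pm\alpha(X_0),\pm 2\alpha(X_0)$ with two distinct nonzero magnitudes, the root $2\alpha$ being present precisely because $k \geq 2$. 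I expect this non-embedding statement to be the main obstacle to make airtight; the care lies in noting that $f(X_0)$ is genuinely $\R$-diagonalizable on $\R^{1,q'}$, which holds since it is a semisimple element lying in a split Cartan subspace of $\su(1,k)$.

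Next, with $Q|_{V'}$ of signature $(2,\cdot)$, I would show that $V'$ is a single irreducible self-dual representation. The minimal real dimension of a nontrivial irreducible representation of $\su(1,k)$ is at least $3$, so a pair of mutually dual non-self-dual summands $W\oplus W^{*}$ would contribute a hyperbolic form of signature $(\dim W,\dim W)$ with $\dim W \geq 3 > 2$ negative directions, which is excluded; and each self-dual nontrivial summand already contributes at least $2$ negative directions by the Lorentzian exclusion above. Since the total number of negative directions is exactly $2$, there is exactly one summand, irreducible with signature $(2,\cdot)$.

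Finally, for the compactness of $C'$, I would invoke the real form of Schur's lemma: $D := \mathrm{End}_{\s}(V')$ is a real division algebra, so $D \in \{\R,\C,\mathbb{H}\}$. The $Q|_{V'}$-adjoint restricts to an anti-involution $*$ on $D$ (the adjoint of an $\s$-equivariant map is again $\s$-equivariant because $\s$ is $Q$-skew), and $C' = \{d\in D^{\times} : dd^{*}=1\}$ is the associated unitary group. Every anti-involution of $D$ preserves the norm form $N$ of $D$, so $dd^{*}=1$ forces $N(d)=1$; as $N$ is a positive-definite quadratic form on $D$ (namely $x^{2}$, $|z|^{2}$, or $|q|^{2}$), its unit level set is compact, whence $C'$ is compact. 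Combined with the first factor, $C=\O(V_0)\times C'$ is compact. I would write these last two paragraphs briefly, since they become soft once the Lorentzian case has been ruled out.
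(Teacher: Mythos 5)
Your proposal is correct, but it takes a genuinely different route from the paper's. The paper works \emph{inside} $\so(2,n)$ via its restricted root decomposition: after aligning Cartan involutions (Mostow), it pins down the embedding explicitly --- $f(\a_{\h})$ lands in the Cartan subspace with $f(A)$ proportional to $(1,1)$, $f(\h_{\lambda}) \subset \g_{\beta}\oplus\g_{\alpha+\beta}$, $f(\h_{2\lambda}) = \g_{\alpha+2\beta}$ --- and then computes the block form of any centralizing element $g$, concluding that its $\GL(2,\R)$-block is elliptic because it is induced by an orthogonal matrix acting on the plane spanned by two orthogonal vectors of equal norm. You instead work on the standard representation $\R^{2,n}$: orthogonal isotypic decomposition, signature bookkeeping, the non-embedding $\su(1,k)\not\hookrightarrow\so(1,m)$ (type $BC_1$ versus type $A_1$ restricted root systems), irreducibility of the nontrivial part, and finally Schur's lemma together with anti-involutions of $\R$, $\C$, $\mathbb{H}$. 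Both proofs hinge on the same feature --- the presence of the root $2\lambda$, i.e.\ $k\geq 2$ --- but you use it only to exclude $\so(1,m)$, and let general orthogonal representation theory do the rest. Your approach is more modular and generalizes at no cost: it shows the centralizer in $\O(2,n)$ of any simple subalgebra embedding in no $\so(m)$ and no $\so(1,m)$, and whose nontrivial real irreducibles have dimension at least $3$, is compact (e.g.\ it applies verbatim to $\sp(1,k)$). The paper's computation, by contrast, yields explicit normal forms for the centralizing elements, in the spirit of its holonomy arguments elsewhere.

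Two points to tighten. First, the $\R$-diagonalizability of $f(X_0)$ on $\R^{1,q'}$ does not follow from semisimplicity alone (elliptic elements are semisimple too, with imaginary eigenvalues); what you need is that embeddings of semisimple Lie algebras carry hyperbolic elements to hyperbolic elements, which follows from Mostow's theorem on compatible Cartan involutions --- the very fact the paper invokes at the start of its own proof, so this is a fair citation rather than a gap. Second, in the irreducibility step, an isotropically paired (hyperbolic) block $W\oplus W^{*}$ can occur even when $W$ is abstractly self-dual, so the dichotomy should be ``hyperbolic pairs versus irreducible summands with nondegenerate restricted form,'' not ``non-self-dual versus self-dual''; your signature count is unaffected, since any hyperbolic pair contributes $\dim W\geq 3$ negative directions.
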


\begin{proof}
Let $\theta_{\h}$ be a Cartan involution of $\h := \su(1,k)$, and fix $\h = \a_{\h} \oplus \u(k-1) \oplus \h_{\pm \lambda} \oplus \h_{\pm 2\lambda}$ a corresponding restricted root-space decomposition. We have $\dim \h_{\pm \lambda} = 2k-2$, $\dim \h_{\pm 2\lambda} = 1$ and the bracket $\h_{\lambda} \times \h_{\lambda} \rightarrow \h_{2\lambda}$ is such that $\h_{\lambda} \oplus \h_{2\lambda}$ is isomorphic to the Heisenberg Lie algebra of dimension $2k-1$.

Choose $A \in \a_{\h}$. There exists a Cartan involution $\theta_{\g}$ of $\g=\so(2,n)$ such that $f \circ \theta_{\h} = \theta_{\g} \circ f$. In particular, $\a_{\h}$ is sent into a Cartan subspace of $\g$ with respect to $\theta_{\g}$, and up to conjugacy in $O(2,n)$, we get that $\a_{\h}$ is sent into the Cartan subspace $\a_{\g}$ of $\g$ described in Section \ref{sss:cartan_decomposition}, corresponding to the standard Cartan involution of matrices Lie algebras. We reuse the notations of this section.

Write $f(A) = (a,b)$. Then, using the fact that $[f(\h_{\lambda}),f(\h_{\lambda}] = f(\h_{2\lambda})$, with $\dim f(\h_{2\lambda}) =1$, we obtain, by considering exhaustively all the possibilities, that necessarily $(a,b)$ is proportional to $(1,1)$ and that $f(\h_{\lambda}) \subset \g_{\beta} \oplus \g_{\alpha + \beta}$ and $f(\h_{2\lambda}) = \g_{\alpha + 2\beta}$ (of course, up to exchanging $\lambda$ and $-\lambda$). 

Now, let $g \in O(2,n)$ centralizing $f(\h)$. Firstly, since $g$ centralizes $f(A)$, whose form is known, it has the form
\begin{equation*}
g =
\begin{pmatrix}
g_0 & & \\
 & G_0 & \\
 & & g_1
\end{pmatrix}
,
\end{equation*}
with $g_0 \in \GL(2,\R)$, $G_0 \in O(n-2)$ and $g_1= V ~ ^{t} (g_1)^{-1} V$, where 
$V = 
\begin{pmatrix}
0 & 1 \\
1 & 0
\end{pmatrix}
$.

Secondly, using $\Ad(g) f(\h_{2\lambda}) = f(\h_{2\lambda})$, we get $g_0 \in \SL(2,\R)$. To finish, we claim that $g_0$ is in fact elliptic, what will be enough. To observe this, take a non-zero element $X \in \h_{\lambda}$. The matrix block-form of $f(X)$ is
\begin{equation*}
f(X) =
\begin{pmatrix}
0 & U & 0 \\
  & & - ~ ^{t} U \\
  & & 0
\end{pmatrix}
,
\text{ with }
U =
\begin{pmatrix}
u_1 & \cdots & u_{n-2} \\
v_1 & \cdots & v_{n-2}
\end{pmatrix}
.
\end{equation*}
Since we have $[f(X),\theta_{\g}f(X)] = f([X,\theta_{\h}X]) \in f(\a_{\h})$ (\cite{knapp}, Prop. 6.52(a)), and since $f(A)$ is proportional to the diagonal matrix
\begin{equation*}
\begin{pmatrix}
1 & & & & \\
 & 1 & & & \\
 & & 0 & & \\
 & & & -1 & \\
 & & & & -1
\end{pmatrix}
,
\end{equation*}
we obtain that the vectors $u= (u_1,\ldots,u_{n-2})$ and $v = (v_1,\ldots,v_{n-2})$ satisfy $|u| = |v|$ and are orthogonal with respect to the standard Euclidean structure of $\R^{n-2}$. In particular, they are linearly independent.

Finally, the fact $\Ad(g) f(X) = f(X)$ gives $g_0 U = U G_0$, meaning
\begin{equation*}
\begin{cases}
uG_0 & = a u + bv \\
vG_0 & = cu + dv
\end{cases}
,
\text{ where }
g_0 =
\begin{pmatrix}
a & b \\
c & d
\end{pmatrix}
.
\end{equation*}
Thus, $G_0$ preserves the plane spanned by $u$ and $v$ and induces there the linear endomorphism $g_0$. Since $G_0$ is orthogonal, we get that $g_0$ is indeed elliptic.
\end{proof}

\vspace*{1cm}

\bibliographystyle{amsalpha}
\bibliography{bibli_sl2_cas_general.bib}

\providecommand{\bysame}{\leavevmode\hbox to3em{\hrulefill}\thinspace}
\providecommand{\MR}{\relax\ifhmode\unskip\space\fi MR }
\providecommand{\MRhref}[2]{%
  \href{http://www.ams.org/mathscinet-getitem?mr=#1}{#2}
}
\providecommand{\href}[2]{#2}
\begin{thebibliography}{BFM09}

\bibitem[Ale85]{alekseevsky}
D.~Alekseevsky, \emph{Self-similar {L}orentzian manifolds}, Ann. Global Anal.
  Geom. \textbf{3} (1985), no.~1, 59--84.

\bibitem[AS97a]{adams_stuck}
S.~Adams and G.~Stuck, \emph{The isometry group of a compact {L}orentz
  manifold. {I}}, Invent. Math. \textbf{129} (1997), no.~2, 239--261.

\bibitem[AS97b]{adams_stuckII}
\bysame, \emph{The isometry group of a compact {L}orentz manifold. {II}},
  Invent. Math. \textbf{129} (1997), no.~2, 263--287.

\bibitem[BFM09]{bader_frances_melnick}
U.~Bader, C.~Frances, and K.~Melnick, \emph{An embedding theorem for
  automorphism groups of {C}artan geometries}, Geom. Funct. Anal. \textbf{19}
  (2009), no.~12, 333--355.

\bibitem[BN02]{bader_nevo}
U.~Bader and A.~Nevo, \emph{Conformal actions of simple {L}ie groups on compact
  pseudo-{R}iemannian manifolds}, J. Differential Geom. \textbf{60} (2002),
  no.~3, 355--387.

\bibitem[BP07]{barreira_pesin}
L.~Barreira and Y.~Pesin, \emph{{Nonuniformly Hyperbolicity: Dynamics of
  Systems with Nonzero Lyapunov Exponents}}, {Cambridge University Press} ed.,
  Encyclopedia of Mathematics and its Applications, ISBN 978-0-521-83258-8,
  2007.

\bibitem[{\v C}S09]{cap_slovak}
A.~{\v C}ap and J.~Slov\'ak, \emph{Parabolic {G}eometries {I} : {B}ackground
  and {G}eneral {T}heory}, Mathematical {S}urveys and {M}onographs, vol. 154,
  Am. Math. Soc., 2009.

\bibitem[D'A88]{dambra88}
G.~D'Ambra, \emph{Isometry groups of {Lorentz} manifolds}, Invent. Math.
  \textbf{92} (1988), 555--565.

\bibitem[DG91]{dambra_gromov}
G.~D'Ambra and M.~Gromov, \emph{Lectures on transformation groups : geometry
  and dynamics}, Surveys in {D}ifferential {G}eometry ({C}ambridge, {MA}, 1990)
  (1991), 19--111.

\bibitem[Fer71]{ferrand71}
J.~Ferrand, \emph{Transformations conformes et quasi-conformes des variétés
  riemanniennes compactes (démonstration de la conjecture de {A}.
  {L}ichnerowicz}, Mém. Acad. Roy. Belg. Cl. Sci. Mém. Coll. in-8 \textbf{39}
  (1971).

\bibitem[Fer96]{ferrand96}
\bysame, \emph{The action of conformal transformations on a {R}iemannian
  manifold}, Math. Ann. \textbf{304} (1996), no.~2, 277--291.

\bibitem[FM10]{frances_melnick10}
C.~Frances and K.~Melnick, \emph{Conformal action of nilpotent groups on
  pseudo-{R}iemannian manifolds}, Duke Math. J. \textbf{153} (2010), no.~3,
  511--550.

\bibitem[FM13]{frances_melnick13}
\bysame, \emph{Formes normales pour les champs conformes pseudo-riemanniens},
  Bull. Soc. Math. France \textbf{141} (2013), no.~3, 377--421.

\bibitem[Fra05]{frances_ferrand_obata_lorentz}
C.~Frances, \emph{Des contre-exemples au théorème de {F}errand-{O}bata en
  géométrie {L}orentzienne conforme}, Math. Ann. \textbf{332} (2005), no.~1,
  103--119.

\bibitem[Fra07]{frances_causal_fields}
\bysame, \emph{Causal conformal vector fields and singularities of twistor
  spinors}, Ann. Global Anal. Geom. \textbf{32} (2007), no.~4, 277--295.

\bibitem[Fra12]{frances_localdynamics}
\bysame, \emph{Local dynamics of conformal vector fields}, Geom. Dedicata
  \textbf{158} (2012), no.~1, 35--39.

\bibitem[Fra15]{frances_lichnerowicz}
\bysame, \emph{About pseudo-{R}iemannian {L}ichnerowicz conjecture}, Transform.
  Groups \textbf{20} (2015), no.~4, 1015--1022.

\bibitem[FZ05]{frances_zeghib}
C.~Frances and A.~Zeghib, \emph{Some remarks on conformal pseudo-riemannian
  actions of semi-simple {L}ie groups}, Math. Res. Lett. \textbf{12} (2005),
  no.~1, 49--56.

\bibitem[Gro88]{gromov}
M.~Gromov, \emph{Rigid transformations groups}, in {G}éométrie
  différentielle (Paris, 1986) (D. Bernard and Y. Choquet-Bruhat, eds.),
  Paris: Hermann, 1988.

\bibitem[Kna02]{knapp}
A.~W. Knapp, \emph{Lie {G}roups {B}eyond an {I}ntroduction, second edition},
  Birkh\"auser, 2002.

\bibitem[KR95]{kuhnel_rademacher1}
W.~K\"uhnel and H.-B. Rademacher, \emph{Essential conformal fields in
  pseudo-{R}iemannian geometry {I}}, J. Math. Pures Appl. \textbf{74} (1995),
  no.~9, 453--481.

\bibitem[KR97]{kuhnel_rademacher2}
\bysame, \emph{Essential conformal fields in pseudo-{R}iemannian geometry
  {II}}, J. Math. Sci. Univ. Tokyo \textbf{4} (1997), no.~3, 649--662.

\bibitem[Mel11]{melnick}
K.~Melnick, \emph{A {F}robenius theorem for {C}artan geometries, with
  applications}, Enseign. Math. (2) \textbf{57} (2011), no.~1-2, 57--89.

\bibitem[Mor05]{morris}
D.~W. Morris, \emph{Ratner's theorems on unipotent flows}, University of
  Chicago Press, 2005.

\bibitem[Oba70]{obata70}
M.~Obata, \emph{Conformal transformations of {R}iemannian manifolds}, J.
  Differential Geom. \textbf{4} (1970), 311--333.

\bibitem[Oba72]{obata71}
\bysame, \emph{The conjectures on conformal transformations of {R}iemannian
  manifolds}, J. Differential Geom. \textbf{6} (1971/1972), 247--258.

\bibitem[Pec14]{these}
V.~Pecastaing, \emph{Le groupe conforme des structures pseudo-riemanniennes},
  Thèse de {D}octorat, Université {P}aris-{S}ud, 2014.

\bibitem[Pec15a]{article3}
\bysame, \emph{Essential conformal actions of {PSL(2,R)} on real-analytic
  compact {L}orentz manifolds}, Preprint arXiv:1510.08812. To appear in Geom.
  Dedicata (2015).

\bibitem[Pec15b]{article2}
\bysame, \emph{Semi-simple {L}ie groups acting conformally on compact {L}orentz
  manifolds}, Preprint arXiv:1506.08693 (2015).

\bibitem[Pec16]{article1}
\bysame, \emph{On two theorems about local automorphisms of geometric
  structures}, Ann. Inst. Fourier \textbf{66} (2016), no.~1, 175--208.

\bibitem[Sha96]{sharpe}
R.W. Sharpe, \emph{Differential geometry, {C}artan's generalization of
  {K}lein's {E}rlangen program}, Springer, 1996.

\bibitem[Zeg98]{zeghib98}
A.~Zeghib, \emph{The identity component of the isometry group of a compact
  {L}orentz manifold}, Duke Math. J. \textbf{129} (1998), no.~2, 321--333.

\bibitem[Zeg99]{zeghib99}
\bysame, \emph{Isometry groups and geodesic foliations of {L}orentz
  manifolds.}, Geom. Funct. Anal. \textbf{9} (1999), no.~4, 775--822.

\bibitem[Zim86]{zimmer86}
R.~J. Zimmer, \emph{On the automorphism group of a compact {L}orentz manifold
  and other geometric manifolds}, Invent. Math. \textbf{83} (1986), no.~3,
  411--424.

\bibitem[Zim87]{zimmer87}
\bysame, \emph{Split rank and semisimple automorphism groups of
  {G}-structures}, J. Differential Geom. \textbf{26} (1987), 169--173.

\end{thebibliography}
\nocite{*}

\end{document}